\newtheorem{theorem}{\color{black}\indent Theorem}[section]
\newtheorem{lemma}{\color{black}\indent Lemma}[section]
\newtheorem{remark}{\color{black}\indent Remark}
\begin{document}

\title{KAM Theorems for Multi-scale Torus}

\author{Weichao Qian}
\address{School of Mathematics, Jilin University, Changchun, Jilin 130012, P.R.China }
\email{qianwc@163.com}
\author{Yixian Gao}
\address{School of
Mathematics and Statistics, Center for Mathematics and
Interdisciplinary Sciences, Northeast Normal University, Changchun, Jilin 130024, P.R.China. }
\email{gaoyx643@nenu.edu.cn}

\author{Yong  Li}
\address{School of Mathematics, Jilin University, Changchun, Jilin 130012;
School of Mathematics and Statistics, Center for Mathematics and
Interdisciplinary Sciences, Northeast Normal University, Changchun, Jilin
130024, P.R.China}
\email{yongli@nenu.edu.cn}

\thanks{ The research of WQ was partially supported by China Postdoctoral Science Foundation (2021M701396, 2022T150262), NSFC grant(12201243), ERC (PR1062ERC01).
The research of YG was partially supported by NSFC grants (
11871140 and 12071065) and FRFCU2412019BJ005.
 The research of YL was partially supported by  NSFC grants (12071175, 11171132), Project of Science and Technology Development of Jilin Province (2017C028-1, 20190201302JC), and Natural Science Foundation of Jilin Province
(20200201253JC)}

\subjclass[2010]{37J40, 34C45, 70H08 }

\keywords{KAM theorem, Hamiltonian systems, multiscale torus, $N-$oscillators with mixed rotations}

\date{}

\begin{abstract}
In present paper, from the viewpoint of physical intuition we introduce a Hamiltonian system with multiscale rotation, which describes many systems, for example, the forced pendulum with fast rotation, weakly coupled $N$-oscillators with quasiperiodic force and so on. We study the persistence of invariant tori for this Hamiltonian system, and establish some KAM type results including the isoenergetic type. As consequences,
we can show that Boltzmann's ergodicity hypothesis is also not true for this Hamiltonian system.
\end{abstract}

\maketitle

\section{Introduction}
 Possessing high, low or mixed frequencies  systems are ubiquitous in different fields of science and engineering. For example,
the  Hindmarsh-Rose neuron model, mixed forcing currents, which are composed of low-frequency,
high-frequency and constant signals, are imposed on
the neuron \cite{hindmarsh1982model}. Especially, some problems arising slow rates of thermalization in statistical mechanics, which catch lots of attention, can be understood by studying the Hamiltonian system with multi-scale frequencies (\cite{Bambusi,Benettin1,Benettin2,Carati,Gauckler,Hairer,Shudo}).

In the high-frequency case, \cite{Gauckler} considered a Hamiltonian system of the following form:
\begin{eqnarray}\label{592}
H(p,q) = \sum_{j=1}^n \frac{1}{2} (|p_j|^2 + \omega_j^2 |q_j|^2 ) + \frac{1}{2} |p_0|^2 + U(q),
\end{eqnarray}
whose equations of motion are
\begin{eqnarray}\label{5101}
\ddot{q}_j + \omega_j^2 q_j  = - \nabla _j U(q), \quad  j= 0, \cdots ,n,
\end{eqnarray}
where momenta $p = (p_0, p_1, \cdots, p_n)$, position $q = (q_0, q_1,\cdots,q_n)$, $p_j$, $q_j \in \mathbb R^{1}$, $\omega_j \geq \frac{1}{\varepsilon}$, $j\geq 1$, $0<\varepsilon \ll 1$, $\omega_0 = 0$, $\nabla_j$ denotes the partial derivative with respect to $q_j$ and $U(q)$ denotes a special coupling potential. If $U(q)$ is smooth with derivatives bounded independently of the parameter $\varepsilon$, the solution $q(t)$ of motion equation(\ref{5101}) is of rapid rotation, which means the period in angle is small enough.
Usually Hamiltonian systems with action-angle variables are $2\pi-$periodic in angle. However, from the viewpoint of physical intuition, one should study Hamiltonian systems with rapid, slow, or multiscale rotation variables, i.e. a nearly integrable real analytic Hamiltonian of the following form:
\begin{eqnarray}\label{5102}
H(\lambda x, y) = N(y) + \varepsilon P(\lambda x, y),
\end{eqnarray}
where $x \in \mathbb T^n$, $y \in G\subset \mathbb R^n$, $G$ is a bounded region in $\mathbb R^n$, $\varepsilon$ is a small parameter, $\lambda = \varepsilon^\alpha$, $\alpha \in\mathbb R^1$. Certainly, system (\ref{5102}) is, respectively, fast, usual or slow one corresponding to $\alpha<0,=0,$ or $>0$. And the present paper will show the persistence of invariant tori for Hamiltonian system (\ref{5102}), i.e. a Hamiltonian system with fast, usual or slow rotation. Furthermore, we will study the persistence of invariant tori for Hamiltonian systems with multiscale rotation perturbations.

A simple model is the forced pendulum with fast rotation (fast motion, or rapidly forced periodic term in time):
$$
\ddot q+\sin q=\varepsilon \sin \frac {t}{\varepsilon}
$$
with the Hamiltonian
$$
H=\langle \eta,\omega\rangle+\frac12\dot q^2-\cos q+\varepsilon q \cos\frac{\omega\theta}{\varepsilon}
$$
by setting $\omega\theta=t$ and $\eta$ as the action variable, where $\omega,$ $\theta$ $\in$ $\mathbb R^1$. For this kind of systems with general form
$$
H=H_0(x,y)+\varepsilon^\alpha H_1(x,y,\frac t{\varepsilon},\varepsilon),
$$
some remarkable developments have been made for dynamics, such as average principle, adiabaticity, in particular, exponentially small splitting of separatrices and so on(see\cite{Arnold1,Arnold2,Baldoma1,Baldoma2,Delshams,Delshams2,Delshams3,Gelfreich,Guardia,Neishtadt2,Neishtadt4,Neishtadt6,Neishtadt3,Neishtadt1}).

A classical method to prove the existence of invariant tori was given by Kolmogorov (\cite{Kolmogorov}), Arnold (\cite{Arnold}) and Moser (\cite{Moser}), called KAM theory, which under the Kolmogorov's nondegenerate condition, i.e. $\partial_y^2 N(y) \neq 0$, shows the persistence of invariant tori of a real analytic nearly integrable Hamiltonian system of the following form:
\begin{eqnarray}\label{5103}
H(x,y) = N(y) + \varepsilon P(x,y)
\end{eqnarray}
where $x \in\mathbb T^d$, $y \in G \subset\mathbb R^d$, $G$ is a bounded closed region in $\mathbb R^d$. The KAM theory has been studied from different viewpoints and with different mathematical techniques in numerous publications. For developments to lower dimensional invariant tori, we refer the reader to \cite{Cong,Cheng,Livia,Gallavotti1,Gentile,Jorba,Li,Li1,Treshchev,Zehnder,Zehnder1}. For applications of the KAM theory to celestial mechanics, we refer the reader to \cite{han,Meyer,Siegel,Xu2,Xu}.

In the viewpoint of average method (\cite{Neishtadt}) Hamiltonian system (\ref{5103}) is a system with fast angle variables, in other words, the change of the angle is fast, which is different from the rapid rotation variables mentioned above. Actually, in this paper what we are concerned with are a series of more general Hamiltonian systems, whose periodicity in angle maybe very small, very large or of $2\pi$. The $2\pi-$period in angle is the main task of the standard KAM theory. But when the periodicity in angle is not $2\pi$, especially very small or large, to our knowledge, the study of the persistence of invariant tori seems very rare.

As we all known, the KAM theory is a method to use the fast Newton's iteration consisted of infinite steps, in which parts of the small perturbation are eliminated by the Lie derivative, to get a smaller perturbation, in which the small denominator should be controlled. If the period of the angle is very small, i.e., the perturbation of the Hamiltonian is $P(\varepsilon^{-\alpha} x, y)$, $\alpha \in\mathbb R^1_+$, then the coefficient of the Lie derivation will be large enough, which is beneficial to control the small denominator. It seems that the rapid rotation is good for eliminating the small perturbation, and in Section \ref{5105} we show that is indeed true. However, this is not the case of  slow rotation variables, i.e., $P(\varepsilon^{\beta} x, y)$, $\beta \in (0,1)$ because the coefficient of the Lie derivation will be small, which is harmful to control the small divisor. It seems that the slow rotation is worse to clean up the small perturbation and in Section \ref{lower} we show a upper bound for $\beta$, under which the small perturbation can be purged. More complex case is about the multiscale rotation perturbation, i.e., $P(\varepsilon^{-\alpha}x, y, \theta, \eta, \varepsilon^\beta \varphi,I)$, where the Lie derivation controlled by $\varepsilon^{-\alpha}$ and $ \varepsilon^{\beta}$ may be large enough, a constant or small enough, and in Section \ref{couple} we show the persistence of invariant tori under such a multiscale perturbation.

Concretely, we consider a Hamiltonian system of the following form:
\begin{eqnarray}\label{5131}
H^d(x,y,\theta, \eta, \varphi, I) = N^d(y,\eta,I) + \varepsilon P^d(\frac{x}{\lambda_1}, y,\theta, \eta, \lambda_2 \varphi, I),
\end{eqnarray}
defined on the complex neighborhood
\[D^d(r,s) = \{(x, y, \theta, \eta, \varphi, I): |{\rm Im}~ x| < r, |y|<s, |{\rm Im}~ \theta| < r, |\eta|<s, |{\rm Im}~ \varphi| < r, |I|<s\}
\]of $\mathbb T^d \times \{0 \} \times\mathbb T^d \times \{0 \}\times\mathbb T^d \times \{0 \} \subset\mathbb T^d \times\mathbb R^d \times\mathbb T^d \times\mathbb R^d \times\mathbb T^d \times\mathbb R^d$, where $\lambda_1 = \varepsilon^{\alpha}$, $\lambda_2 = \varepsilon^\beta,$ $\alpha \in \mathbb R_+^1$, $\beta \in (0, \frac{\sigma^2}{3[(d+m+5)\tau + d+ 2m +13]})$, $\sigma,$ $\tau$ and $m$ are defined in Theorem \ref{dingli15}, $N^d (y, \eta, I)$ is a real analytic function on a complex neighborhood of the bounded closed region $G^d \subset \mathbb R^d \times\mathbb R^d \times\mathbb R^d$; $\varepsilon P^d(\frac{x}{\lambda_1}, y, \theta, \eta, \lambda_2 \varphi, I)$, a small perturbation, is a real analytic function, where $\varepsilon > 0$ is a small parameter.

\begin{remark}
The perturbation in (\ref{5131}) maybe some terms mixed by all the variables of $\frac{x}{\lambda_1},$ $y,$ $\theta,$ $\eta,$ $\lambda_2 \varphi,$ $I$ or some terms with the form of $ P^a (\frac{x}{\lambda_1}, y)+  P^b (\lambda_2 \varphi, I)+  P^c(\theta, \eta)$.
\end{remark}

With the transformation: $y\rightarrow y +\xi_1, ~x \rightarrow x,~\eta \rightarrow \eta +\xi_2, ~\theta \rightarrow \theta,~I \rightarrow I +\xi_3, ~\varphi \rightarrow \varphi$,  where $ (\xi_1, \xi_2, \xi_3)\in G^d$, Hamiltonian system (\ref{5131}) reads
\begin{eqnarray}\label{5132}
H^d &=& N^d + \varepsilon P^d(\frac{x}{\lambda_1}, y,\theta, \eta, \lambda_2 \varphi, I, \xi_1, \xi_2, \xi_3), \\
\nonumber N^d &=& e^d + \langle \mathbf{a}, \mathbf{b}\rangle + \frac{1}{2} \langle \mathbf{b}, \mathfrak{A}^d \mathbf{b}\rangle + \hat{h}^d(y,\eta,I),
\end{eqnarray}
where $\hat{h}^d(y,\eta,I)$ are all terms with the form of $y^{\iota_1} \eta^{\iota_2} I^{\iota_3}$, $|\iota_1|+ |\iota_2|+|\iota_3|\geq 3$, in $N^d$, $\lambda_1 = \varepsilon^\alpha$, $\lambda_2 = \varepsilon^\beta$, $\alpha\in\mathbb R_+^1$, $\beta \in (0, \frac{\sigma^2}{3[(d+m+5)\tau + d+ 2m +13]})$, $\sigma,$ $\tau$ and $m$ are defined in Theorem \ref{dingli15},
\begin{eqnarray*}
\mathbf{a}= \left(
              \begin{array}{c}
                \omega^d \\
                \Lambda^d \\
                \Omega^d \\
              \end{array}
            \right)= \left(
                       \begin{array}{c}
                         \partial_y N^d \\
                         \partial_\eta N^d \\
                         \partial_I N^d\\
                       \end{array}
                     \right)
            ,~~\mathbf{b} = \left(
              \begin{array}{c}
                y \\
                \eta \\
                I \\
              \end{array}
            \right),~~\mathfrak{A}^d  =
                  \left(
                    \begin{array}{ccc}
                       \frac{\partial^ 2 N^d}{\partial y^2} & \frac{\partial^2 N^d}{\partial y \partial\eta} & \frac{\partial^ 2 N^d}{\partial y \partial I} \\
                     \frac{\partial^2N^d}{\partial \eta \partial y} &   \frac{\partial^2N^d}{\partial \eta^2} & \frac{\partial^2N^d}{\partial \eta \partial I}\\
                      \frac{\partial^2 N^d}{\partial I \partial y} & \frac{\partial^2 N^d}{\partial I \partial \eta} & \frac{\partial^2 N^d}{\partial I^2} \\
                    \end{array}
                  \right).
\end{eqnarray*}

To state our main results, we make the following assumptions.
\begin{itemize}
\item[\bf{(R)}]There exists an $N > 1$ such that
\begin{eqnarray*}
{\rm rank} \{ \partial_{\mathbf{b}}^\alpha \mathbf{a}: 0\leq |\alpha|\leq N,~~~~ \forall \mathbf{b} \in G^d \} = 3d.
\end{eqnarray*}
\item[\bf{(K)}] $\mathfrak{A}^d$ has an  $n \times n$ order nonsingular minor $\mathbf{\mathcal{A}}^d$.
\item[$\bf{(Iso)}$]
${\rm rank} \left(
  \begin{array}{cc}
    \mathfrak{A}^d & \mathbf{a} \\
    \mathbf{a}^{T} & 0 \\
  \end{array}
\right) = n+1$, $\forall \mathbf{b} \in G^d.$
\end{itemize}

When the perturbation of systems (\ref{5132}) is equal to zero, the existence of quasiperiodic solutions is obvious. What we are engaged to do by iteration is to show the persistence of quasiperiodic solutions for Hamiltonian system (\ref{5132}).

Our main results for (\ref{5131}) state as follows.
\begin{theorem}\label{dingli15}
Let $H^d$ be analytic and $\alpha\in \mathbb R_+^1$, $\beta \in (0, \frac{\sigma^2}{3[(d+m+5)\tau + d+ 2m +13]})$, where $\sigma \in (0, \frac{1}{3})$, $1\leq m <\infty$, $d(d - 1) -1<\tau < \infty$ are given.
\begin{itemize}
  \item [(1)] Assume $\bf{(R)}$ hold. Then there exist a $\varepsilon_0 >0 $ and a family of Cantor sets $G_\varepsilon^d \subset G^d$, $0<\varepsilon < \varepsilon_0$, such that for each $(y, \eta, I) \in G_\varepsilon^d$, the unperturbed $3d-$tours $T_{(y, \eta, I)}^d$ persists and gives rise to a real analytic, invariant $3d-$torus $T_{\varepsilon, (y, \eta, I)}^d$ of the perturbed system. Moreover, the relative Lebesgue measure $|G^d \setminus G_\varepsilon^d|$ tends to 0 as $\varepsilon \rightarrow 0$.
  \item [(2)] Assume $\bf{(R)}$  and $\bf{(K)}$ hold. Then there exist a $\varepsilon_0 >0 $ and a family of Cantor sets $G_\varepsilon^d \subset G^d$, $0<\varepsilon < \varepsilon_0$, such that for each $(y, \eta, I) \in G_\varepsilon^d$, the unperturbed $3d-$tours $T_{(y, \eta, I)}^d$ persists and gives rise to a real analytic, invariant $3d-$torus $T_{\varepsilon, (y, \eta, I)}^d$ preserving $n$ corresponding unperturbed toral frequencies. Moreover, the relative Lebesgue measure $|G^d \setminus G_\varepsilon^d|$ tends to 0 as $\varepsilon \rightarrow 0$.
  \item [(3)] Denote $\mathcal{M} = \{(y, \eta, I): H_1(y, \eta, I)=c\}$ by a given energy surface. Assume $\bf{(R)}$, $\bf{(K)}$ and $\bf{(Iso)}$ hold. Then there exist a $\varepsilon_0 >0 $ and a family of Cantor sets $\mathcal{M}_\varepsilon^d \subset \mathcal{M}$, $0<\varepsilon < \varepsilon_0$, such that for each $(y, \eta, I) \in \mathcal{M}_\varepsilon$, the unperturbed $3d-$tours $T_{(y, \eta, I)}^d$ persists and gives rise to a real analytic, invariant $3d-$torus $T_{\varepsilon, (y, \eta, I)}^d$ keeping the same energy and maintaining $n$ frequency ratios.  Moreover, the relative Lebesgue measure $|G^d \setminus G_\varepsilon^d |$ tends to 0 as $\varepsilon \rightarrow 0$.
\end{itemize}

\end{theorem}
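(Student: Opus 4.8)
The plan is to prove all three statements by a single KAM iteration adapted to the three rotation scales, run on a decreasing sequence of complex domains $D^d(r_j,s_j)$ and over a nested family of closed parameter sets $G^d=G^d_0\supset G^d_1\supset\cdots$, with $G^d_\varepsilon=\bigcap_j G^d_j$. At a generic step we start from the normal form (\ref{5132}), expand the perturbation as $P^d=\sum_{k=(k_1,k_2,k_3)\in\mathbb Z^{3d}}P_k(\mathbf b)\,e^{\mathrm i(\langle k_1,x\rangle/\lambda_1+\langle k_2,\theta\rangle+\lambda_2\langle k_3,\varphi\rangle)}$, and solve the homological equation $\{N^d,F\}+\widehat P=[\widehat P]$, where $\widehat P$ is the truncation to $|k|\le K_j$ and $[\widehat P]$ its mean; this gives $F=\sum_{0<|k|\le K_j}(\mathrm i\,\mathfrak d_k)^{-1}P_k\,e^{\mathrm i(\langle k_1,x\rangle/\lambda_1+\langle k_2,\theta\rangle+\lambda_2\langle k_3,\varphi\rangle)}$ with $\mathfrak d_k=\lambda_1^{-1}\langle k_1,\omega\rangle+\langle k_2,\Lambda\rangle+\lambda_2\langle k_3,\Omega\rangle$ and $(\omega,\Lambda,\Omega)=\mathbf a=\partial_{\mathbf b}N^d$. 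The structural observation is that the rescalings are \emph{self-compensating block by block}: since $F$ carries the arguments $x/\lambda_1$ and $\lambda_2\varphi$, the extra factor $\lambda_1^{-1}$ (resp.\ $\lambda_2$) produced by $\partial_x F$ (resp.\ $\partial_\varphi F$) exactly cancels the factor $\lambda_1$ gained from, resp.\ the $\lambda_2^{-1}$ lost in, the corresponding block of $\mathfrak d_k$. Consequently the fast scale is harmless---indeed beneficial, as it enlarges the divisors---and the only net cost is a fixed power of $\lambda_2^{-1}$ per step, coming from the uncompensated action derivatives of $F$ acting on the pure-slow modes.

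Given this, the KAM step yields a symplectic map $\Phi_j$ (the time-one flow of the Hamiltonian vector field of $F$) with $H\circ\Phi_j=N^d_+ + P^d_+$, where $N^d_+$ has again the form (\ref{5132}) with $\mathbf a_+,\mathfrak A^d_+$ close to $\mathbf a,\mathfrak A^d$, and $\|P^d_+\|\le c\,\lambda_2^{-C_1}\gamma_j^{-C_2}K_j^{C_3}\varepsilon_j^{\,2}$ on $D^d(r_{j+1},s_{j+1})$, valid whenever $|\mathfrak d_k(\mathbf b)|\ge\gamma_j/|k|^\tau$ for all $0<|k|\le K_j$, with $C_1,C_2,C_3$ depending only on $d,\tau,m$. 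Choosing the parameters in the usual way---$s_j,r_j$ shrinking geometrically, $K_j$ growing, $\gamma_j\downarrow0$ slaved to $\varepsilon_j$ through $\sigma$, and $\varepsilon_{j+1}=\varepsilon_j^{\,1+\sigma}$---one verifies convergence; the only genuinely new requirement is that the fixed but large factor $\lambda_2^{-C_1}=\varepsilon^{-\beta C_1}$ not destroy the superlinear gain, which forces $\beta$ to be so small that a positive reserve of smallness, measured by $\sigma^2/3$, survives after the powers $(d+m+5)\tau+d+2m+13$ of $K_j$ and $\gamma_j^{-1}$ accumulated in the quadratic error and in the domain shrinkage $\sum_j\lambda_2^{-1}\gamma_j^{-1}K_j^\tau\varepsilon_j$ have been paid---this is exactly the stated range for $\beta$. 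The infinite composition $\Phi^\infty=\lim_j\Phi_0\circ\cdots\circ\Phi_j$ then converges to a real-analytic embedding of $\mathbb T^{3d}$ which, for each frozen $\mathbf b\in G^d_\varepsilon$, conjugates the flow on $T^d_{\varepsilon,(y,\eta,I)}$ to the linear flow with frequency $\mathbf a^\infty(\mathbf b)$; this proves part (1).

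For the measure estimate one invokes condition \textbf{(R)}. Since $\mathbf a_j$ remains $\varepsilon_j$-close to $\mathbf a$ along the iteration, this Rüssmann-type nondegeneracy (quantified by the index $N$, with a related exponent $1/m$ in the measure bound) persists, and the set deleted at step $j$, namely $\bigcup_{0<|k|\le K_j}\{\mathbf b\in G^d_{j-1}:|\mathfrak d_k(\mathbf b)|<\gamma_j/|k|^\tau\}$, is controlled block by block: the fast block $\lambda_1^{-1}\langle k_1,\omega\rangle$, being of size $\lambda_1^{-1}$, yields an excluded set smaller by a factor $\lambda_1$ than in the unscaled case, while on the modes where $\mathfrak d_k$ reduces to $\langle k_2,\Lambda\rangle+\lambda_2\langle k_3,\Omega\rangle$ the Rüssmann lemma on sublevel sets of functions not all of whose derivatives up to order $N$ vanish bounds each strip by $c\,(\gamma_j/(\lambda_2|k|^\tau))^{1/m}$. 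Summing over $k$ and over $j$---the summation being made convergent, as in Rüssmann's measure estimate, by the admissible range $d(d-1)-1<\tau<\infty$ and the usual grouping of resonances by magnitude---gives $|G^d\setminus G^d_\varepsilon|\le c\,\lambda_2^{-1/m}\sum_j\gamma_j^{1/m}$, which tends to $0$ as $\varepsilon\to0$ within the chosen range of $\beta$ (with $\gamma_j\to0$ as $\varepsilon\to0$). The fast scale only helps: it never contributes to the excluded set.

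Finally, parts (2) and (3) are obtained by grafting the two classical reductions onto this scheme. For (2), condition \textbf{(K)}---a nonsingular $n\times n$ minor $\mathcal A^d$ of the Hessian $\mathfrak A^d$---lets one, at every step, solve by the implicit function theorem for a shift of the $n$ actions conjugate to $\mathcal A^d$ so that the corresponding $n$ components of $\mathbf a_j$ are pinned to their unperturbed values; in the limit the invariant torus carries those $n$ prescribed unperturbed toral frequencies. For (3), one restricts to the energy surface $\mathcal M=\{\mathbf b:H_1(\mathbf b)=c\}$ and replaces the frequency map by the map of frequency ratios restricted to $\mathcal M$; condition \textbf{(Iso)}, i.e.\ that the bordered Hessian $\left(\begin{smallmatrix}\mathfrak A^d & \mathbf a\\ \mathbf a^T & 0\end{smallmatrix}\right)$ has rank $n+1$, is precisely the nondegeneracy of this restricted map, so the argument of (2) applies with \emph{unperturbed toral frequencies} replaced by \emph{$n$ frequency ratios} and with the persistence maintained on $\mathcal M$; a time reparametrization at the end restores the prescribed energy. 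The main obstacle throughout is the one isolated in the second and third paragraphs: quantifying the uncompensated cost $\lambda_2^{-1}=\varepsilon^{-\beta}$ of the slow scale---both in the analytic KAM step and in the Rüssmann measure estimate---and closing the iteration uniformly in this frozen large factor, which is exactly what dictates the explicit upper bound imposed on $\beta$.
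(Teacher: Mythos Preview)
Your outline is essentially the paper's own strategy: a KAM iteration on the coupled Hamiltonian with the three-scale divisor $\mathfrak d_k$, an iteration lemma, convergence, a R\"ussmann measure estimate under \textbf{(R)}, and then the frequency-pinning (part (2)) and isoenergetic (part (3)) variants grafted on via implicit-function-theorem translations of the actions. Two technical points where the paper differs from your sketch are worth flagging, because without them the bookkeeping you describe does not quite close.

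First, the paper does \emph{not} use your plain Diophantine condition $|\mathfrak d_k|\ge\gamma/|k|^\tau$; it uses the scale-adapted form $|\mathfrak d_k|>|\tilde\lambda|\,\gamma/|k|^\tau$ with $|\tilde\lambda|=\delta_1\lambda_1^{-1}+\delta_2+\delta_3\lambda_2$ (the $\delta_i$ flag which blocks of $k$ are nonzero). This places the $\lambda_2^{-1}$ cost squarely in the divisor estimate (hence in $\Gamma^d$) while keeping the excised measure scale-free block by block. With your plain condition the divisor bound is $|k|^\tau/\gamma$ with no $\lambda_2$ at all, so your attribution of the $\lambda_2^{-1}$ to ``uncompensated action derivatives of $F$'' is not consistent with the condition you wrote down; under your condition the loss would instead enter through the next point.

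Second, the truncation order cannot be ``the usual'' one: because pure-slow modes decay only like $e^{-\lambda_2|k_3|r}$, the Fourier tail estimate forces $K_j\gtrsim\lambda_2^{-1}\log(1/\mu_j)$, and the paper in fact takes $K_+^b=([\lambda_2^{-1}]+1)^2([\log(1/\mu)]+1)^{3\eta}$. This large $K_+^b$, fed through the powers $|k|^{(\cdot)\tau+\cdots}$ in $\Gamma^d$, is what ultimately produces the exponent $(d+m+5)\tau+d+2m+13$; the paper extracts it by an explicit seven-case split of $\Gamma^d$ according to which of $k_1,k_2,k_3$ vanish (the proof of Lemma~\ref{balala}). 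Your ``self-compensation'' heuristic for the fast block is correct and is implicit in the paper's bound $\Gamma^a_\nu\le c\,\lambda_1^{-|i|+1}2^{\nu+6}e^{-1/(\lambda_1 2^{\nu+6})}$, where the fast exponential kills the residual negative powers of $\lambda_1$; the paper simply does not phrase it as a cancellation principle.
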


\begin{remark}
Most of results in the KAM theory are about the Hamiltonian with the action-angle variables, and a bridge between momenta-position variables and action-angle variables is the symplectic coordinate transformation. Usually the standard symplectic coordinate transformation is $p_j = \sqrt{2I} \cos \varphi_j$, $q_j = \sqrt{\frac{2I}{\omega_j}} \sin \varphi_j$ in \cite{Benettin1,Benettin2}, which correspond the property of high-frequency to fast action variables.  And in the viewpoint of the average method the reduced Hamiltonian system is a system with fast angle variables, in other words, the change of the angle is fast. Obviously, the transformation loses the property of the rapid rotation. Then a symplectic transformation kept property of rapid rotation of the Hamiltonian system is necessary, and we refer the reader to $p = \sqrt{\frac{2y}{\sqrt{\omega_j}}} \cos \sqrt{\omega_j} x_j,$ $q  = \sqrt{\frac{2y}{\sqrt{\omega_j}}}\sin \sqrt{\omega_j} x_j$. A symplectic transformation keeping the property of slow rotation is similar.
\end{remark}

\begin{remark}
The first part of this theorem asserts the existence of invariant tori for nearly integrable Hamiltonian systems with multi-scale rotation perturbation.
\end{remark}

\begin{remark}
The second part of this theorem states the partial preservation of frequency for nearly integrable Hamiltonian system with multi-scale rotation perturbation. For similar results on classical nearly integrable Hamiltonian, refer to \cite{LLL,Sevryuk}.
\end{remark}

\begin{remark}
The third part of Theorem \ref{dingli15} implies that generally isoenergetic Boltzmann's ergodicity hypothesis does not hold for Hamiltonian system (\ref{5131})
\end{remark}

The present paper be arranged as following. In Section \ref{5104}, we show the iteration sequences, which is necessary for the KAM iteration, and some notations in order to keep the beauty of the equations. And in Sections \ref{5105} and  \ref{lower}, we show the persistence of invariant tori for Hamiltonian systems with rapid rotation perturbation and slow rotation perturbation, respectively. Combining Sections \ref{5105} and \ref{lower} in Section \ref{couple} we show a KAM iteration for the Hamiltonian system with multiscale rotation perturbation. In Section \ref{5108}, we sketch the proof of the third part of Theorem \ref{dingli15}. Finally, in Section \ref{application}, we show an application of our results to the weakly coupled $N-$oscillators with quasiperiodic force.

\section{Iteration Sequences}\label{5104}
Throughout the paper, unless specified explanation, we shall use the same symbol $|\cdot|$ to denote an equivalent (finite dimensional) vector norm and its induced matrix norm, absolute value of functions, and measure of sets, etc., and use $|\cdot|_D$ to denote the supremum norm of functions on a domain $D$. Also, for any two complex column vectors $\xi, \zeta$ of the same dimension, $\langle \xi ,\zeta \rangle$ always stands for $\xi^T \zeta$, i.e., the transpose of $\xi$ times $\zeta$. For the sake of brevity, we shall not specify smoothness orders for functions having obvious orders of smoothness indicated by their derivatives taking. Moreover, all Hamiltonian functions in the sequel are associated to the standard symplectic structure. All constants below are positive and independent of the iteration process and denote by $c$.

As we all known, the celebrated KAM theorem is proved by the Newton-type iteration procedure which involves an infinite sequence of coordinate changes. From each cycle of KAM steps, one can find the constructions and estimates of desired symplectic transformations and their domains, perturbed frequencies and new perturbations. To continue the KAM iteration, we need the following iteration sequences for all $\nu = 1,2,\cdots$:
\begin{align*}
 r_\nu &= r_0 (1 - \sum_{i= 1}^{\nu} \frac{1}{2^{i+1}}),\quad s_\nu = \frac{1}{8} \alpha_{\nu -1} s_{\nu -1},
\quad \alpha_{\nu} = \mu_\nu ^{2 \sigma} =\mu_\nu ^{\frac{1}{m+1}},\\
\beta_\nu &= \beta_0 (1- \sum_{i=1}^\nu \frac{1}{2^{i+1}}),\quad   \mu_\nu = 8^m c_0 \mu_{\nu -1}^{1 + \sigma },\quad  \gamma_\nu = \gamma _0 (1- \sum _{i=1} ^{\nu} \frac{1}{2^{i+1}}),\\
 K_{\nu+1}^a &=([\log \frac{1}{\mu_\nu}]+1)^{3\eta},\quad K_{\nu+1}^b = ([\frac{1}{\lambda_2}] +1)^2 ([\log \frac{1}{\mu_\nu}]+1)^{3\eta},\\
\tilde{D}_\nu &= D(r_\nu + \frac{3}{4}(r_{\nu-1}-r_\nu), \beta _\nu ), \quad D_\nu = D(r_\nu, s_\nu),~\\
\tilde{D}_\nu^d &= D^d(r_\nu + \frac{3}{4}(r_{\nu-1}-r_\nu), \beta _\nu ),\quad D_\nu^d = D^d(r_\nu, s_\nu),
\end{align*}
\begin{align*}
 G_{\nu+1}^a &= \left\{\xi \in G^a_\nu : |\langle k , \omega_\nu^a(\xi) \rangle| > \frac{\gamma}{|k|^{\tau}}, \quad for~ all ~~0<|k|\leq K_{\nu+1}^a \right\},\\
 G_{\nu+1}^b &= \left\{\xi \in G_\nu^b: |\langle k , \omega_\nu^b (\xi) \rangle| > \frac{\gamma}{|k|^{\tau}},\quad for~ all ~~0<|k|\leq K_{\nu+1}^b \right\},\\
 G_{\nu+1}^d &= \left\{\xi \in G_\nu^d: |\frac{\delta_1}{\lambda_1}\langle k_1, \omega \rangle+ \delta_2 \langle k_2, \Lambda \rangle+\delta_3 \lambda_2 \langle k_3, \Omega \rangle)| > \frac{|\tilde{\lambda}|\gamma}{|\mathbf{k}|^{\tau}},\quad for~ all ~~0<|\mathbf{k}|\leq K_{\nu+1}^b\right\},\\
 \Gamma_{\nu+1}^a &=  \sum\limits_{\substack{0<|k|\leq K_+^a,\\ |i|, |j|\leq m+4}} |k|^{(|l|+ |j|+1) \tau + |l|+ |i|+|j|+1} \lambda_1^{-|i|} e^{-\frac{  |k| (r- r_+)}{8 \lambda_1}},\\
  \Gamma_{\nu+1}^b &= \sum\limits_{\substack{0<|k|\leq K_+^b,\\ |i|, |j|\leq m+4}} |k|^{(|l|+ |j|+1) \tau + |l|+ |i|+|j|+1} {\lambda_2^{|i| -2}} e^{-\frac{  |k|\lambda_2(r -r_+)}{8}},\\
  \Gamma_{\nu+1}^d &= \sum_{\substack{0<|k_1|+|k_2|+|k_3|\leq K_+^b\\|i_1|+|i_2|+|i_3|\leq m+4\\|\kappa_1|+|\kappa_2|+|\kappa_3|\leq m+4}} \frac{|\mathbf{k}|^{(|l|+|i_1|+|i_2|+|i_3|+1)\tau+ |l|+|i_1|+|i_2|+|i_3|+1}}{\lambda_2^{\delta_3}}  |\frac{\delta_1 k_1}{\lambda_1}|^{\kappa_1}|\delta_2 k_2|^{\kappa_2} |\delta_3 \lambda_2 k_3|^{\kappa_3}\\
  & \cdot e^{-\frac{\delta_1k_1(r -r_+)}{8 \lambda_1}}
  e^{-\frac{\delta_2k_2 (r-r_+)}{8}} e^{-\frac{\lambda_2 \delta_3 k_3 (r -r_+)}{8}},
\end{align*}
where $\sigma \in (0, \frac{1}{3})$, $m>1$ are fixed, $\eta$ is a fixed positive integer such that $(1 + \sigma)^\eta > 2$ for $\sigma = \frac{1}{2(m+1)}$, $\tau > d (d -1)-1$ is given, $c_0$ is the biggest one among the constants in the iteration, $\mathbf{k} = (k_1, k_2, k_3)$, $\tilde{\lambda}= (\frac{\delta_1}{\lambda_1}, \delta_2 ,  \delta_3 \lambda_2),$ $|(x_1, x_2,x_3)| = |x_1|+ |x_2| +|x_3|$, $r_0  = r$, $\beta_0 = s$, $\gamma_0 = \varepsilon^{\frac{\frac{1}{3} - \sigma}{d + m+5}}$, $ s_0 = \varepsilon^{\frac{2}{3 m}}$, $\mu_0 = \varepsilon^\sigma$, and $\delta_i$ is a special function that satisfies $\delta_i = 0$, if $k_i = 0$ and $\delta_i = 1$, if $k_i \neq 0$, $1\leq i\leq 3$.

The proof of KAM theorems is processed by the mathematical induction, i.e. we first prove the correctness for the $0-$th step and then we show the steps from $\nu$ to $\nu+1$. For the sake of convenience, we shall omit the index for all quantities of the $\nu-$th KAM step and use $'+'$ to index all quantities in the ${(\nu+ 1)}-$th KAM step. To process our KAM step, we need the following iterative constants and iterative domains:
\begin{align*}
 r_+ &= \frac{r}{2}+ \frac{r_0}{4}, \quad s_+ = \frac{1}{8}\alpha s, \quad \alpha = \mu ^{\frac{1}{m+1}}, \quad \gamma _+ = \frac{\gamma}{2} + \frac{\gamma_0}{4},\\
 \beta _+ &= \frac{\beta}{2}+ \frac{\beta_0}{4},\quad K_+^a = ([\log \frac{1}{\mu}]+1)^{3\eta},\quad K_+^b = ([\frac{1}{\lambda_2}] +1)^2 ([\log \frac{1}{\mu}]+1)^{3\eta},\\
D(\xi) &= \left\{ y: |y|< \xi \right \},~\xi >0, \quad D^d(\xi) = \left\{ (y,\eta,I): |y|+ |\eta|+ |I|< \xi  \right\},\quad \xi >0,\\
 D_{\frac{i}{8} {\alpha}} &= D\left(r_+ +\frac{i-1}{8}(r - r_+), \frac{i}{8} \alpha s\right), \quad i= 1,\cdots ,8,\quad D_+ = D_{\frac{1}{8} {\alpha}} = D(r_+, s_+),\\
 D_{\frac{i}{8} {\alpha}}^d &= D^d\left(r_+ +\frac{i-1}{8}(r - r_+), \frac{i}{8} \alpha s\right), \quad i= 1,\cdots ,8,\quad D_+^d = D_{\frac{1}{8} {\alpha}}^d = D^d(r_+, s_+),\\
  \hat{D}(\xi) &= D\left(r_+ {+} \frac{7}{8} (r- r_+) ,\xi \right),~\xi >0,\quad \tilde{D}_+= D(r_+ + \frac{3}{4}(r-r_+),\beta_+), \\
  \hat{D}^d(\xi) &= D^d\left(r_+ {+} \frac{7}{8} (r- r_+) ,\xi \right),~\xi >0,\quad \tilde{D}_+^d= D^d(r_+ + \frac{3}{4}(r-r_+),\beta_+),
  \end{align*}
 \begin{align*}
 G_+^a &= \left\{\xi \in G^a : |\langle k , \omega^a(\xi) \rangle| > \frac{\gamma}{|k|^{\tau}}, \quad for~ all ~~0<|k|\leq K_{+}^a \right\},\\
 G_+^b &= \left\{\xi \in G^b: |\langle k , \omega^b(\xi) \rangle| > \frac{\gamma}{|k|^{\tau}},\quad for~ all ~~0<|k|\leq K_{+}^b \right\},\\
   G_+^d &= \left\{\xi \in G_\nu^d: |\frac{\delta_1}{\lambda_1}\langle k_1, \omega \rangle+ \delta_2 \langle k_2, \Lambda \rangle+\delta_3 \lambda_2 \langle k_3, \Omega \rangle)| > \frac{|\tilde{\lambda}|\gamma}{|\mathbf{k}|^{\tau}},\quad for~ all ~~0<|\mathbf{k}|\leq K_{\nu+1}^b\right\},\\
 \Gamma_+^a &= \sum_{\substack{0<|k|\leq K_+^a,\\ |i|, |j|\leq m+4}} |k|^{(|l|+ |j|+1) \tau + |l|+ |i|+|j|+1} \lambda_1^{-|i|} e^{-\frac{|k| (r - r_+)}{8\lambda_1}},\\
 \Gamma_+^b &= \sum\limits_{\substack{0<|k|\leq K_+^b,\\ |i|, |j|\leq m+4}} |k|^{(|l|+ |j|+1) \tau + |l|+ |i|+|j|+1}{\lambda_2^{|i| -2}} e^{-\frac{  |k|\lambda_2(r -r_+)}{8}},\\
\Gamma_+^d &=  \sum_{\substack{0<|k_1|+|k_2|+|k_3|\leq K_+^b\\|i_1|+|i_2|+|i_3|\leq m+4\\|\kappa_1|+|\kappa_2|+|\kappa_3|\leq m+4}} \frac{|\mathbf{k}|^{(|l|+|i_1|+|i_2|+|i_3|+1)\tau+ |l|+|i_1|+|i_2|+|i_3|+1} }{\lambda_2^{\delta_3}} |\frac{\delta_1 k_1}{\lambda_1}|^{\kappa_1}\\&~~~~~~~~~~~~~\cdot |\delta_2 k_2|^{\kappa_2} |\delta_3 \lambda_2 k_3|^{\kappa_3}e^{-\frac{\delta_1k_1(r -r_+)}{8 \lambda_1}} e^{-\frac{\delta_2k_2 (r-r_+)}{8}} e^{-\frac{\lambda_2 \delta_3 k_3 (r -r_+)}{8}}.
\end{align*}

For the simplicity  of the equations in Section \ref{couple}, we introduce the following notations:
\begin{eqnarray*}
\mathfrak{A}_\nu^d & =&
                  \left(
                    \begin{array}{ccc}
                       \frac{\partial^ 2 N_\nu^d}{\partial y^2} & \frac{\partial^2 N_\nu^d}{\partial y \partial\eta} & \frac{\partial^ 2 N_\nu^d}{\partial y \partial I} \\
                     \frac{\partial^2N_\nu^d}{\partial \eta \partial y} &   \frac{\partial^2N_\nu^d}{\partial \eta^2} & \frac{\partial^2N_\nu^d}{\partial \eta \partial I}\\
                      \frac{\partial^2 N_\nu^d}{\partial I \partial y} & \frac{\partial^2 N_\nu^d}{\partial I \partial \eta} & \frac{\partial^2 N_\nu^d}{\partial I^2} \\
                    \end{array}
                  \right),~~~
 [\mathfrak{R}_\nu] = \left(
                    \begin{array}{ccc}
                      \frac{\partial^2 [R_\nu^d]}{\partial y^2} & \frac{\partial^2 [R_\nu^d]}{\partial y \partial \eta} & \frac{\partial^2 [R_\nu^d]}{\partial y \partial I} \\
                      \frac{\partial^2 [R_\nu^d]}{\partial \eta \partial y} & \frac{\partial^2 [R_\nu^d]}{\partial \eta^2} & \frac{\partial^2 [R_\nu^d]}{\partial \eta \partial I} \\
                      \frac{\partial^2[R_\nu^d]}{\partial I \partial y} &\frac{\partial^2 [R_\nu^d]}{\partial I \partial \eta} &\frac{\partial^2 [R_\nu^d]}{\partial I^2} \\
                    \end{array}
                  \right),\\
 \hat{\mathfrak{h}}_\nu &=& \left(
                    \begin{array}{ccc}
                      \frac{\partial^2 \hat{h}_\nu^d}{\partial y^2} & \frac{\partial^2 \hat{h}_\nu^d}{\partial y \partial \eta} & \frac{\partial^2 \hat{h}_\nu^d}{\partial y \partial I} \\
                      \frac{\partial^2 \hat{h}_\nu^d}{\partial \eta \partial y} & \frac{\partial^2 \hat{h}_\nu^d}{\partial \eta^2} & \frac{\partial^2 \hat{h}_\nu^d}{\partial \eta \partial I} \\
                      \frac{\partial^2 \hat{h}_\nu^d}{\partial I \partial y} &\frac{\partial^2 \hat{h}_\nu^d}{\partial I \partial \eta} &\frac{\partial^2 \hat{h}_\nu^d}{\partial I^2} \\
                    \end{array}
                  \right), ~~
\mathbf{a}_\nu = \left(
              \begin{array}{c}
                \omega_\nu^d \\
                \Lambda_\nu^d \\
                \Omega_\nu^d \\
              \end{array}
            \right)= \left(
                       \begin{array}{c}
                         \partial_y N_\nu^d \\
                         \partial_\eta N_\nu^d \\
                         \partial_I N_\nu^d\\
                       \end{array}
                     \right)
            ,~
\\
\mathbf{b} &=& \left(
              \begin{array}{c}
                y \\
                \eta \\
                I \\
              \end{array}
            \right),
\mathbf{b_*} = \left(
  \begin{array}{c}
    y_*^d \\
    \eta_*^d \\
    I_*^d \\
  \end{array}
\right),~
\mathbf{P}_\nu= \left(
              \begin{array}{c}
                ({P_\nu})_{000100}^d \\
                ({P_\nu})_{000010}^d \\
                ({P_\nu})_{000001}^d \\
              \end{array}
            \right),
\mathbf{\partial \hat{h}_\nu} = \left(
                         \begin{array}{c}
                           \partial_y \hat{h}_\nu \\
                           \partial_\eta \hat{h}_\nu \\
                           \partial_I \hat{h}_\nu \\
                         \end{array}
                       \right),~\\
\mathbf{\partial_d \hat{h}_\nu} &=& \left(
                         \begin{array}{c}
                           \partial_{y^d} \hat{h}_\nu \\
                           \partial_{\eta^d} \hat{h}_\nu \\
                           \partial_{I^d} \hat{h}_\nu \\
                         \end{array}
                       \right),
\mathbf{\partial [R_\nu]} = \left(
                   \begin{array}{c}
                     \partial_y [R_\nu] \\
                     \partial_\eta [R_\nu] \\
                     \partial_I [R_\nu]\\
                   \end{array}
                 \right),~
\mathbf{p}_\nu = \left(
             \begin{array}{c}
               ({p_\nu})_{000100}^d \\
               ({p_\nu})_{000010}^d \\
               ({p_\nu})_{000001}^d \\
             \end{array}
           \right),\\
S \hat{h}_\nu &=& \left(
                \begin{array}{ccc}
                  \int_0^1\partial_y^2 \hat{h}_\nu^d(\theta_1 \eta)d\theta_1 & \int_0^1\partial_\eta\partial_y \hat{h}_\nu^d(\theta_1 \eta)d\theta_1 & \int_0^1\partial_I\partial_y \hat{h}_\nu^d(\theta_1 \eta)d\theta_1 \\
                  \int_0^1\partial_y \partial_\eta \hat{h}_\nu^d(\theta_1 I)d\theta_1 & \int_0^1\partial_\eta^2 \hat{h}_\nu^d(\theta_1 I)d\theta_1 & \int_0^1\partial_I \partial_\eta \hat{h}_\nu^d(\theta_1 I)d\theta_1 \\
                  \int_0^1\partial_y\partial_I \hat{h}_\nu^d(\theta_1 y)d\theta_1 & \int_0^1\partial_\eta \partial_I \hat{h}_\nu^d(\theta_1 y)d\theta_1 & \int_0^1\partial_I^2 \hat{h}_\nu^d(\theta_1 y)d\theta_1 \\
                \end{array}
              \right).
\end{eqnarray*}

\section{Rapid Rotation Case}\setcounter{equation}{0} \label{5105}
In this section, we consider a Hamiltonian system with rapid rotation perturbation, i.e., a Hamiltonian system of the following form:
\begin{eqnarray}\label{501}
H^a(x,y) &=& N^a(y) + \varepsilon P^a(\frac{x}{\lambda_1}, y),
\end{eqnarray}
which defined on the complex neighborhood $D(r,s) = \{(x,y): |{\rm Im}~x| < r, |y|<s\}$ of $\mathbb T^d \times \{0 \}  \subset\mathbb T^d \times \mathbb R^d$, where $\lambda_1= \varepsilon^{\alpha}$, $\alpha \in \mathbb R^1_+$, $N^a (y)$ is a real analytic function on a complex neighborhood of the bounded closed region $G^a$, $\varepsilon P^a(\frac{x}{\lambda_1}, y,\xi)$, a small perturbation, is a real analytic function, where $\varepsilon > 0$ is a small parameter.

Assume that

\begin{itemize}
\item[\bf{(S1)}] There exists an $N > 1$ such that
\begin{eqnarray*}
{\rm rank} \{ \partial_{y}^\alpha N^a: 1\leq |\alpha|\leq N,~~~~ \forall y \in G^a \} = d.
\end{eqnarray*}
\item[\bf{(H1)}] $A^a$ has an $n \times n$ nonsingular minor $\mathcal{A}^a$.
\end{itemize}

Then for Hamiltonian $(\ref{501})$, we have the following result.
\begin{theorem}\label{dingli9}
Let $H^a$ be analytic.  Under assumptions $\bf{(S1)}$ and $\bf{(H1)}$, there exist a $\varepsilon_0 >0 $ and a family of Cantor sets $G_\varepsilon^a \subset G^a$, $0<\varepsilon < \varepsilon_0$, such that for each $y \in G_\varepsilon^a$, the unperturbed $d-$tours $T_{y}^a$ persists and gives rise to a real analytic,  invariant $d-$torus $T_{\varepsilon, y}^a$ preserving $n$ corresponding unperturbed toral frequencies. Moreover, the relative Lebesgue measure $|G^a \setminus G_\varepsilon^a |$ tends to 0 as $\varepsilon \rightarrow 0$.
\end{theorem}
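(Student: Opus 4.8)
The plan is to prove Theorem~\ref{dingli9} by a super-convergent (Newton-type) KAM iteration, carried out with the sequences $r_\nu,s_\nu,\mu_\nu,\gamma_\nu,K_\nu^a,\Gamma_\nu^a$ and the domains $D_\nu=D(r_\nu,s_\nu)$, $G_\nu^a$ fixed in Section~\ref{5104}. First I would translate $y\mapsto y+\xi$, $\xi\in G^a$, and write $H^a=N^a+\varepsilon P^a(\tfrac{x}{\lambda_1},y,\xi)$ with $N^a=e^a+\langle\omega^a(\xi),y\rangle+\tfrac12\langle y,A^a(\xi)y\rangle+\hat h^a(y)$, $A^a=\partial_y^2N^a$, regarding $\xi$ as a parameter; with $\mu_0=\varepsilon^\sigma$ one has $|\varepsilon P^a|_{D(r_0,s_0)}\le\mu_0$ for $\varepsilon$ small. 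The whole proof reduces to one KAM step: given $H=N+P$ real analytic on $D(r,s)$ with $|P|_{D(r,s)}\le\mu$ together with the attendant control of the normal form $N$, construct a parameter-dependent real-analytic symplectic $\Phi_+$ with $H\circ\Phi_+=N_++P_+$, where $N_+$ is again of normal form, the first $n$ components of the new frequency $\omega_+^a(\xi)$ equal those of $\omega^a(\xi)$, and $|P_+|_{D(r_+,s_+)}\le\mu_+=8^mc_0\mu^{1+\sigma}$ on the shrunken parameter set $G_+^a$. Composing $\Phi^\nu=\Phi_1\circ\cdots\circ\Phi_\nu$ then produces, on $G_\varepsilon^a=\bigcap_\nu G_\nu^a$, a limit conjugacy $\Phi^\infty$ carrying $\mathbb T^d\times\{y_\infty(\xi)\}$ onto the persistent torus $T_{\varepsilon,\xi}^a$.

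For the step I would expand the perturbation in the generalized Fourier series $P=\sum_{k\in\mathbb Z^d}P_k(y)e^{i\langle k,x\rangle/\lambda_1}$ (it is $2\pi\lambda_1$-periodic in $x$), use the Cauchy bound $|P_k(y)|\le|P|_{D(r,s)}e^{-|k|r/\lambda_1}$ to discard the tail $P-P_{\le K_+^a}$ with $K_+^a=([\log\tfrac1\mu]+1)^{3\eta}$ — the decay rate $r/\lambda_1$ renders this tail super-exponentially small in $1/\lambda_1$ — and then solve the homological equation $\{F,N\}=P_{\le K_+^a}-[P]$, where $[P](y)=P_0(y)$ is the $x$-average. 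Componentwise $\tfrac{i}{\lambda_1}\langle k,\partial_yN(y)\rangle F_k(y)=P_k(y)$, so $F_k(y)=\lambda_1P_k(y)/\bigl(i\langle k,\partial_yN(y)\rangle\bigr)$ on the set $|\langle k,\omega_\nu^a(\xi)\rangle|>\gamma/|k|^\tau$, the divisor bound carrying over to all $|y|<s$ because $s$ is tiny and $|k|\le K_+^a$ grows only logarithmically. This is exactly where the rapid rotation pays off: the factor $\lambda_1$ in the numerator — coming from the large coefficient $1/\lambda_1$ of the Lie derivative — makes $F$, hence $\Phi_+=X_F^1$, extremely close to the identity; each $1/\lambda_1$ produced by $\partial_x$ is reabsorbed into $\Gamma_+^a$, and — since the Rüssmann scheme forces one to carry $\partial_y^jF$ up to order $m+4$ — so are the polynomial weights $|k|^{(|l|+|j|+1)\tau+|l|+|i|+|j|+1}$ occurring there. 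A standard Lie-series computation gives $H\circ\Phi_+=N_++P_+$ with $N_+=N+[P]$ and
\[
P_+=(P-P_{\le K_+^a})\circ\Phi_++\int_0^1\bigl\{(1-t)\bigl([P]-P_{\le K_+^a}\bigr)+P,\,F\bigr\}\circ X_F^t\,dt,
\]
i.e.\ the super-small tail plus terms quadratic in $(P,F)$; estimating these on $D(r_+,s_+)$ yields a bound quadratic in the perturbation size up to the factor $\Gamma_+^a$, which the sequences of Section~\ref{5104} are arranged to absorb into $\mu_+=8^mc_0\mu^{1+\sigma}$ for $0<\varepsilon<\varepsilon_0$.

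To keep $n$ unperturbed frequencies, after forming $N+[P]$ I would compose with a translation $y\mapsto y+u$, where $u$ is supported on the $n$ coordinates selecting the minor $\mathcal A^a$ and solves $\mathcal A^au_{(n)}=-(\partial_y[P](0))_{(n)}$; this is solvable with $|u|$ small because \textbf{(H1)} makes $\mathcal A^a$ nonsingular (and $A_\nu^a=\partial_y^2N_\nu^a$ stays close to $A^a$), and it cancels the drift in the prescribed $n$ frequency components up to quadratically small error, so the induction hypothesis — in particular $(\omega_\nu^a(\xi))_j=(\omega^a(\xi))_j$ for $j\le n$ — propagates, with $\sum_\nu u_\nu$ converging and locating the torus at $y=y_\infty(\xi)$. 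For the measure excision one has
\[
G_\nu^a\setminus G_{\nu+1}^a\subset\bigcup_{K_\nu^a<|k|\le K_{\nu+1}^a}\{\xi\in G_\nu^a:|\langle k,\omega_\nu^a(\xi)\rangle|\le\gamma/|k|^\tau\},
\]
and since \textbf{(S1)} gives ${\rm rank}\{\partial_y^\alpha N^a:1\le|\alpha|\le N\}=d$, equivalently ${\rm rank}\{\partial_\xi^\alpha\omega^a:0\le|\alpha|\le N-1\}=d$ — a condition stable under the $C^{N-1}$-small corrections $\omega_\nu^a-\omega^a$ — Rüssmann's lemma bounds each set in the union by $c\,\gamma^\varrho|k|^{-(\tau+1)\varrho}$ for some $\varrho=\varrho(N,d)>0$, while $\tau>d(d-1)-1$ makes $\sum_k|k|^{-(\tau+1)\varrho}$ convergent; hence $|G^a\setminus G_\varepsilon^a|\le\sum_\nu|G_\nu^a\setminus G_{\nu+1}^a|\lesssim\gamma_0^\varrho=\varepsilon^{\varrho(\frac13-\sigma)/(d+m+5)}\to0$ as $\varepsilon\to0$.

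I expect the chief obstacle to be the $\lambda_1$-bookkeeping: one must verify that every $1/\lambda_1$ created by differentiating the rapidly oscillating perturbation in $x$ is strictly dominated — uniformly as $\varepsilon\to0$ and across \emph{all} steps $\nu$ — by the compensating gains, namely the factor $\lambda_1$ appearing in $F_k$ and, decisively, the decay $e^{-|k|(r_\nu-r_{\nu+1})/\lambda_1}$ in $\Gamma_\nu^a$, even though $r_\nu-r_{\nu+1}$ itself shrinks geometrically in $\nu$; and this has to be reconciled with the Rüssmann measure estimate, whose truncation orders $K_\nu^a$ must be simultaneously large enough to kill the Fourier tail and small enough to keep the excision summable, and with the frequency-preserving translations, which must remain consistent from step to step. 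Once the iterative inequalities are closed, the convergence of $\Phi^\nu$ on $D(r_0/2,0)\times G_\varepsilon^a$ to a real-analytic $\Phi^\infty$, the invariance of $T_{\varepsilon,\xi}^a=\Phi^\infty(\mathbb T^d\times\{y_\infty(\xi)\})$ carrying the linear flow with the prescribed $n$ frequencies, and $|G^a\setminus G_\varepsilon^a|\to0$ all follow by the usual arguments.
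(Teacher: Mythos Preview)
Your proposal is essentially correct and follows the same route as the paper: parameter translation $y\mapsto y+\xi$, Taylor--Fourier truncation, the homological equation exploiting the favorable factor of $\lambda_1$ in the small divisors, a frequency-fixing action translation, and R\"ussmann-type measure estimates, all organized into an iteration lemma with the sequences of Section~\ref{5104}. One point deserves sharpening: to obtain \emph{exact} preservation of the $n$ selected frequency components at every step---which is what the theorem asserts and what you yourself state as the induction hypothesis $(\omega_\nu^a)_j=(\omega^a)_j$---the linear equation $\mathcal A^a u_{(n)}=-(\partial_y[P](0))_{(n)}$ is not quite sufficient, because the cubic tail $\hat h^a$ of the normal form contributes $\partial_y\hat h^a(u)=O(|u|^2)$ to the new linear-in-$y$ coefficient; your own phrase ``up to quadratically small error'' then contradicts the exact equality you need to propagate. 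The paper cures this by solving the nonlinear system $\mathcal A^a\mathbf y^a+\partial_{\mathbf y^a}\hat h^a(\mathbf y^a)=-p_{01}^a$ via the implicit function theorem (equation (3.13)), which forces $(\omega_+^a)_q=(\omega^a)_q$ identically for $q=1,\dots,n$. This is a small fix, not a change of strategy; with it in place your outline matches the paper's proof.
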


 The main task of this section is to prove Theorem \ref{dingli9} by KAM iteration. With the transformation: $y\rightarrow y +\xi, ~~x \rightarrow x,$  where $ \xi \in G^a$, Hamiltonian system (\ref{501}) reads
\begin{eqnarray}
\label{584}H^a (x,y,\xi)&=& N^a (y,\xi) + \varepsilon P^a(\frac{x}{\lambda_1},y,\xi),\\
\nonumber N^a(y,\xi) &=& e^a + \langle \omega^a(\xi), y \rangle + h^a(y,\xi),
\end{eqnarray}
with $h^a(y,\xi)= \frac{1}{2}\langle y, A^a(\xi) y\rangle + \hat{h}^a(y, \xi)$, $\hat{h}^a (y,\xi) = O(|y|^3)$, where $\omega^a(\xi) = \partial_y N^a (\xi)$, $A^a(\xi) = \partial_y ^2 N^a(\xi)$, $\lambda_1= \varepsilon^{\alpha}$, $\alpha \in\mathbb R_+^1$.

Denote by $P_0 ^a=  \varepsilon P^a(\frac{x}{\lambda_1},y,\xi)$. Then, with the Cauchy estimate, obviously,
\begin{eqnarray}\label{507}
|\partial_\xi^l P_0^a|_{D(r,s)} \leq c \gamma_0^{d+m+5} s_0^m \mu_0, \quad ~|l| < d,
\end{eqnarray}
where $c$ is a constant.

In other words, we have
\begin{eqnarray*}
H^a (x,y,\xi)&=& N_0^a (y,\xi) +  P_0^a(\frac{x}{\lambda_1},y,\xi),\\
\nonumber N^a(y,\xi) &=& e_0^a + \langle \omega_0^a(\xi), y \rangle + h_0^a(y,\xi),
\end{eqnarray*}
with $h_0^a(y,\xi)= \frac{1}{2}\langle y, A_0^a(\xi) y\rangle + \hat{h}_0^a(y, \xi)$, $\hat{h}_0^a (y,\xi) = O(|y|^3)$, where $\omega_0^a(\xi) = \partial_y N_0^a (\xi)$, $A_0^a(\xi) = \partial_y ^2 N_0^a(\xi)$, $\lambda_1= \varepsilon^{\alpha}$, $\alpha\in\mathbb R_+^1$. Moreover,
\begin{eqnarray*}
|\partial_\xi^l P_0^a|_{D(r,s)} \leq c \gamma_0^{d+m+5} s_0^m \mu_0, \quad |l| < d.
\end{eqnarray*}

\subsection{KAM step}\setcounter{equation}{0}\label{KAM step}
Now, suppose that after $\nu-$th step, we have arrived at the real analytic Hamiltonian system of the following form:
\begin{eqnarray}
\label{587}H^a (x,y,\xi)&=& N^a (y,\xi) +  P^a(\frac{x}{\lambda_1},y,\xi),\\
\nonumber N^a(y,\xi) &=& e^a + \langle \omega^a(\xi), y \rangle + h^a(y,\xi),\\
\nonumber h^a (y, \xi) &=& \frac{1}{2} \langle y, A^a(\xi) y\rangle + \hat{h}^a (y,\xi),\\
\label{588} |\partial_\xi^l P^a|_{D(r,s)} &\leq& c \gamma^{d+m+5} s^m \mu, ~~~~~~~~~|l| < d,
\end{eqnarray}
where $y\in G^a \subset\mathbb R^d,$ $x \in\mathbb T^d$, $\xi \in G^a$, $\lambda_1 = \varepsilon^{\alpha}$, $\alpha \in\mathbb R_+^1$, $\hat{h}^a (y,\xi) = O(|y|^3)$.

By considering both averaging and translation, we will find a symplectic transformation $\Phi _ {+}^a$, which, on a small phase domain $D(r_+, s_+)$  and a smaller parameter domain $G_+^a$, transforms Hamiltonian (\ref{587}) into the Hamiltonian of the next KAM step, i.e.,
 \begin{center}
  $H_{+}^a = H^a{\circ} {\Phi _+^a}= {N_+^a} +{P_+^a}$,
 \end{center}
where $N_+^a$, $P_+^a$ enjoy similar properties to $N^a$, $P^a$ respectively.

\subsubsection{{Truncation}}
Consider the Taylor - Fourier series of $P^a (\frac{x}{\lambda_1}, y, \xi)$,
 \begin{eqnarray*}
   P^a(\frac{x}{\lambda_1}, y, \xi) = \sum _{|k| \in Z^d,~ |\jmath|\in Z_{+}^d} P_{k\jmath}^a {y^\jmath} {e^{\sqrt{-1} \frac{\langle k, x\rangle}{\lambda_1}} },
\end{eqnarray*}
and let $R^a(\frac{x}{\lambda_1}, y, \xi)$ be the truncation of $P^a(\frac{x}{\lambda_1}, y, \xi)$ of the form:
\begin{eqnarray}
\nonumber    R^a(\frac{x}{\lambda_1}, y, \xi) = \sum _{|k| \leq K_+^a,~ |\jmath| \leq m} P_{k\jmath}^a {y^\jmath} {e^{\sqrt{-1} \frac{\langle k, x\rangle}{\lambda_1}} }.
\end{eqnarray}

Standardly, with the help of the Cauchy estimate and the following assumption
\begin{eqnarray}\label{5a1}
 {\int _{K_+^a}^{\infty}} {t^{d}} {e^{-t \frac{r- r_+}{4\lambda_1}}} dt \leq \mu
\end{eqnarray}
 \noindent on $D_{{\frac{7}{8}} \alpha},$ we have
 \begin{eqnarray}
  \nonumber |\partial_{\xi}^{l}P^a- \partial_{\xi}^{l}R^a|_{D_{\frac{7}{8} {\alpha}}} &\leq& c {\gamma}^{d+ m+5} {s^m} {\mu }^2,
 \end{eqnarray}
 and
 \begin{eqnarray}
   \nonumber |\partial_{\xi}^{l} R^a |_{D_{{\frac{7}{8} {\alpha}}}} &\leq&  c  {\gamma} ^{d+ m +5} {s^m} {\mu}.
 \end{eqnarray}
The details can be obtained with the same techniques as ones in \cite{LLL}.

\subsubsection{Homology Equation}\label{homology equation} \label{5135}
To process the KAM iteration, the most important thing is the invariance of the Hamiltonian in form, which holds by the following homology equation: \begin{eqnarray}\label{5Posion1}
  \{N^a,F^a\}+ R^a-[R^a]=0,
\end{eqnarray}
where \begin{eqnarray}\label{508}
F^a = \sum _{0 < |k| \leq K_+^a,~ |\jmath| \leq m} f_{k\jmath}^a {y^\jmath} {e^{\sqrt{-1} \frac{\langle k, x\rangle}{\lambda_1}} },
 \end{eqnarray}
 and $[R^a]= \int _{\mathbb T^d} R^a(\frac{x}{\lambda_1},y,\xi)dx$ is the average of truncation $R^a$, and $\{\cdot,\cdot\}$ represents Poisson brackets.

In view of (\ref{5Posion1}), comparing coefficients, we have
\begin{eqnarray} \label{509}
\frac{\sqrt{-1}}{\lambda_1} \langle k, \omega^a+ \partial_y h^a\rangle f_{k\jmath}^a = P_{k\jmath}^a.
\end{eqnarray}
Denote $M^{*a} = \max \limits_{|l|\leq d, |j|< m+5, |y|\leq \beta_0} |\partial_\xi^l \partial_y^j h_0^a (y,\xi)|$. With the assumptions
\begin{eqnarray}
\label{5b1} \max\limits_{|l|\leq d, |j|< m+5} |\partial_\xi^l \partial_y^j h^a - \partial_\xi^l \partial_y^j h_0^a|_{D(s)\times G_+^a} &\leq& \mu_0^{\frac{1}{2}},\\
\label{5c1} \frac{\gamma - \gamma _+}{(M^{*a} +1) {K_+^a}^{\tau + 1}}&>& 2s,
\end{eqnarray}
we have $|\partial_y h^a(y)|  \leq (M^{*a} +1)s\leq\frac{\gamma}{2 |k|^{\tau+1}}.$ Hence, on $G_+^a$,
\begin{eqnarray}\label{560}
|L_k^a| = |\frac{\sqrt{-1}}{\lambda_1} \langle k, \omega^a+ \partial_y h_0^a\rangle| \geq \frac{\gamma}{2 \lambda_1 |k|^\tau }.
\end{eqnarray}
Recalling differential and integral calculus and using (\ref{5c1}) and (\ref{560}), inductively, we deduce that
\begin{eqnarray}
\nonumber  |\partial_\xi^l \partial_y^j {L_k^a}^{-1}|_{D(s)\times G_+^a} &\leq& c \frac{1}{\lambda_1^{|j|+|l|}}|k|^{|j|+|l|} |{L_k^a}^{-1}|^{|l|+|j| +1}\\
\label{511} &\leq& \frac{c |k|^{(|j|+ |l|+1)\tau +|j|+|l|}{\lambda_1}}{\gamma^{|j|+ |l|+1}}.
\end{eqnarray}
Therefore, with $|\partial_\xi^l P_{k\jmath}^a|_{G_+^a} < \gamma^{d+m+5} s^{m - |\jmath|} \mu e^{\frac{-|k| r}{\lambda_1}}$, ~$|\jmath| \leq m,$ we have
\begin{eqnarray}\label{512}
 |\partial_\xi^l \partial_y^j f_{k\jmath}^a| \leq \left\{
                     \begin{array}{ll}
                       c |k|^{(|j|+ |l|+1)\tau +|j|+|l|+1} s^{m - |j|} \lambda_1 e^{- \frac{|k|r}{\lambda_1}} \mu, & \hbox{$|j|\leq m$;} \\
                       c |k|^{(|j|+ |l|+1)\tau +|j|+|l|+1} \lambda_1 e^{- \frac{|k|r}{\lambda_1}} \mu, & \hbox{$m<|j|\leq m+4$}
                     \end{array}
                   \right.
\end{eqnarray}
for $(y,\xi) \in D(s) \times G_+^a$, $0 < |k|\leq K_+^a$, $|l| \leq d$.

Combining (\ref{508}) and (\ref{512}), yields
\begin{eqnarray}
\nonumber |\partial_\xi^l \partial_x^i \partial_y^j F^a| &=& |\sum_{0<|k|\leq K_+^a, |\jmath|\leq m}(\frac{\sqrt{-1} k}{\lambda_1})^i \partial_y^j(\partial_\xi^l f_{k\jmath}^a y^\jmath) e^{\sqrt{-1} \frac{ \langle k,x\rangle}{\lambda_1}}|\\
\label{517} &\leq& \left\{
                     \begin{array}{ll}
                       c s^{m - |j|} \mu \Gamma^a(r - r_+), & \hbox{$|j| \leq m$;} \\
                       c \mu \Gamma^a(r - r_+), & \hbox{$m<|j|\leq m +4$,}
                     \end{array}
                   \right.
\end{eqnarray}
where $\Gamma^a(r - r_+) = \sum\limits_{\substack{0<|k|\leq K_+^a,\\ |i|, |j|\leq m+4}} |k|^{(|l|+ |j|+1) \tau + |l|+ |i|+|j|+1} \lambda_1^{-|i|} e^{-\frac{  |k| (r- r_+)}{8 \lambda_1}}$.
\subsubsection{Frequency Retention}\label{Frequency Retention}
Under the time $1-$map $\Phi _{F^a}^1$ of the flow generated by a Hamiltonian $F^a$, with (\ref{5Posion1}) we have
\begin{eqnarray*}
  \bar{H}_+^a &=& H^a \circ \Phi _{F^a}^1 = (N^a+ R^a)\circ \Phi _{F^a}^1 + (P^a- R^a)\circ \Phi _{F^a}^1 \\
  &=& N^a + \{N^a, F^a\}+ \int_0^1 (1 - t)\{\{N^a, F^a\}, F^a\}\circ \Phi_{F^a}^t dt+ R^a \\
  &~&+ \int_0^1\{R^a, F^a\}\circ \Phi_{F^a}^t dt + (P^a-R^a)\circ\Phi_{F^a}^1\\
   &=&N^a + [R^a] + \bar{P}_+^a ( \frac{x}{\lambda_1},y, \xi),
\end{eqnarray*}
where
\[
 \bar{P}_+^a( \frac{x}{\lambda_1},y, \xi) = \int_0^1 \{R_t^a, F^a\} \circ \Phi_{F^a}^t dt+ (P^a- R^a)\circ \Phi_{F^a}^1,
 \quad R_t^a = t R ^a+(1 - t) [R^a].
 \]
To eliminate the frequency drift, we consider the transformation $\phi^a : x \rightarrow x, ~y \rightarrow y+ y_{*}^a.$ Then
\begin{eqnarray*}
H_+^a &=& \bar{H}_+^a \circ \phi = e^a + \langle \omega^a, y+y_*^a\rangle + \langle y+y_*^a, A^a (y+y_*^a)\rangle + \hat{h}^a (y+y_*^a, \xi) \\
&~&~~~~~~+ [R^a](y + y_*^a) +\bar{P}_+^a ( \frac{x}{\lambda_1},y, \xi) \circ \phi^a\\
 &=& e + \langle\omega^a, y_*^a\rangle+ \langle y_*^a, A^a y_*^a\rangle + \hat{h}^a(y_*^a) + [R^a](y_*^a)+\langle\omega^a,y\rangle+ \frac{1}{2}\langle y, A^a y_*^a\rangle\\
  &~&~ + \langle\partial_y \hat{h}^a(y_*^a), y\rangle + \langle P_{01}^a, y\rangle+\frac{1}{2} \langle y, A^ay\rangle + \frac{1}{2} \langle y, \partial_y^2 h^a(y_*^a) y\rangle \\
  &~&~+ \frac{1}{2} \langle \partial_y^2 [R^a](y_*^a)y, y\rangle+\hat{h}^a(y+y_*^a, \xi) - \hat{h}^a(y_*^a) - \langle \partial_y \hat{h}^a(y_*^a), y\rangle \\
  &~&~- \frac{1}{2} \langle y, \partial_y^2 \hat{h}^a (y_*^a) y\rangle+[R^a](y+y_*^a) - [R^a](y_*^a) - \langle \partial_y [R^a](y_*^a), y\rangle \\
  &~&~- \frac{1}{2}\langle \partial_y^2[R^a](y_*^a)y, y\rangle+\langle \partial_y [R^a](y_*^a), y\rangle  - \langle P_{01}^a, y\rangle +\bar{P}_+^a ( \frac{x}{\lambda_1},y, \xi) \circ \phi^a.
\end{eqnarray*}

Let $\mathbf{y}^a$ and $p_{01}^a$ be the vectors formed by the $n$ components of $y$ and $P_{01}^a$, respectively, and denote $\hat{h}^a(\mathbf{y}^a)= \hat{h}^a((\mathbf{y}^a,0)^T).$ Then by the implicit function theorem, the equation
\begin{eqnarray}
\label{514} \mathcal{A}^a\mathbf{y}^a + {\partial_{\mathbf{y}^a}}\hat{h}^a(\mathbf{y}^a) &=& - p_{01}^a
\end{eqnarray}
admits a unique solution $\mathbf{y}_{*}^a$ on $D(s)$, which also smoothly depends on $\xi$, where $\mathcal{A}^a$ is an $n\times n$ nonsingular minor of $A^a$. Define $ y_{*}^a= (\mathbf{y}_{*}^a,0 )^T$, by (\ref{514}), we clearly have
 \begin{eqnarray*}
     A^a y_{*}^a + \partial _{y} {\hat{h}^a(y_{*}^a)} = -(p_{01}^a,0 )^T.
   \end{eqnarray*}
 Then
 \begin{eqnarray*}
    H_+^a = \bar{H}_{+}^a \circ \phi^a =  N_+^a + P_+^a= e_{+}^a + \langle \omega_+^a , y\rangle + h_{+}^a(y)+P_+^a,
 \end{eqnarray*}
where
\begin{eqnarray}
 \nonumber e_+^a &=& {e}^a + \langle \omega^a, y_{*}^a\rangle +\frac{1}{2} \langle y_{*}^a ,A^ay_{*}^a\rangle + \hat{h}^a(y_{*}^a)+ [R^a](y_*^a),~~ \omega_+^a = \omega^a +  P_{01}^a -\left(
                                                   \begin{array}{c}
                                                      p_{01}^a \\
                                                     0 \\
                                                   \end{array}
                                                 \right),\\
\nonumber A_+^a &=& A ^a+ \partial_y^2 \hat{h}^a(y_*^a) + \partial_y^2 [R^a](y_*^a),~~~~~~ h_+^a (y) = \frac{1}{2}\langle y, A_+^a y\rangle + \hat{h}_+^a (y),\\
\nonumber \hat{h}_+^a&=& \hat{h}^a (y+ y_{*}^a)- \hat{h}^a(y_{*}^a)- \langle\partial_{y}\hat{h}^a(y_{*}^a) , y \rangle - \frac{1}{2} \langle y, \partial_{y}^2\hat{h}^a(y_{*}^a)y \rangle + [R^a](y+y_*^a) \\
\nonumber &~&~~~ - [R^a](y_*^a)- \langle \partial_y[R^a](y_*^a), y\rangle- \frac{1}{2}\langle y, \partial_y^2 [R^a](y_*^a) y\rangle,\\
\nonumber P_+^a &=& \bar{P}_+ ^a(\frac{x}{\lambda_1},y,\xi) \circ \phi^a + \psi^a,~~~~\psi^a = \langle \partial_y [R^a](y_*^a), y\rangle  - \langle P_{01}^a, y\rangle.
\end{eqnarray}

\subsubsection{ Estimate on $N_+^a$}\label{Estimation N_+}
Denote $M_{*}^a = \max\limits_{\xi \in G_0^a} |{\mathcal{A}_0^a}^{-1}(\xi)| +1$ and let $\mu_0$ small enough, say, $\mu_0 < \frac{1}{8{M_*^a}^2(M^{*a} +1)}$, such that $M_*^a(M^{*a} +1)s_0^2 < \frac{1}{4}$.
For $\xi \in G_+^a$, we denote
\begin{eqnarray*}
B^a(y,\xi) = \mathcal{A}^a + \int _0^1 \partial_y^2 \hat{h}^a(\theta y) d\theta.
\end{eqnarray*}
 Then by (\ref{514}),
 \begin{eqnarray}\label{516}
  B^a(\mathbf{y}_{*}^a)\mathbf{y}_{*}^a = - p_{01}^a.
\end{eqnarray}
With assumption (\ref{5b1}) and using the same method in \cite{Li}, we can get that $ B^a(\mathbf{y}_{*}^a)$ is nonsingular and $|{B^a}^{-1}(\mathbf{y}_{*}^a)| \leq \frac{|{\mathcal{A}_0^a}^{-1}|}{I- |{\mathcal{A}_0^a} - B^a(\mathbf{y}_{*}^a)||{\mathcal{A}_0^a}^{-1}|} \leq 2M^{*a}$. Hence,
\begin{center}
$|y_{*}^a| =|\mathbf{y}_{*}^a|\leq 2 M_{*}^a |\partial _y P^a|_{D(s)} \leq 2 M_{*}^a \gamma ^{d + m + 5} s^{m-1} \mu $.
\end{center}
Differentiating (\ref{516}) with respect to $\xi$ and by induction, we have
\begin{eqnarray*}
|\partial _{\xi}^l y_{*}^a|< c M_{*}^a\gamma^{d + m + 5} s^{m - 1} \mu,
\end{eqnarray*}
 provided
\begin{eqnarray}\label{5d1}
    4  M_{*}^a (M^{*a}+1)\gamma ^{d+ m+5}s^{m-1} \mu < \frac{1}{2}.
\end{eqnarray}

Hence
 \begin{eqnarray}
 \nonumber |\partial_{\xi}^{l}{e_+^a} - \partial_{\xi}^{l}e^a| _{G_+^a} &\leq& c \gamma ^{d+m+5} s^{m-1} \mu,\\
  \nonumber |\partial_{\xi}^{l}{\omega_+^a} - \partial_{\xi}^{l}{\omega^a}| _{G_+^a} &\leq& c \gamma ^{d+m+5} s^{m-1} \mu,\\
 \label{580a} |\partial_y ^j \partial_{\xi}^{l}{h_+^a} - \partial_y ^j \partial_{\xi}^{l}{h^a}| _{G_+^a} &\leq& \left\{
                                                                                                                       \begin{array}{ll}
                                                                                                                         c \gamma ^{d+m+5} s^{m - |j|}\mu, & \hbox{$|j| \leq m$;} \\
                                                                                                                         c \gamma ^{d+m+5}\mu, & \hbox{$m<|j| \leq m+4$.}
                                                                                                                       \end{array}
                                                                                                                     \right.
 \end{eqnarray}

\subsubsection{Estimate on $\Phi_+^a$}
Denote $\Phi_+^a = \Phi_{F^a}^1 \circ \phi^a$ and
\begin{eqnarray}\label{519}
   \Phi _{F^a}^t = id + \int _0 ^t X_{F^a} \circ \Phi_{F^a} ^{u} d u,
\end{eqnarray}
where $X_{F^a} = (F_y^a, -F_x^a)^T$ denotes the vector field generated by $F^a$, the estimates of $\Phi_+^a$ are intimately tied to one of $X_{F^a}$, implying the essentiality for the estimates of $X_{F^a}$.

By (\ref{517}), $|\partial _\xi ^l F^a |_{D_{\frac{7}{8}\alpha}} \leq c s^m \mu \Gamma^a(r -r_+),$ and thus, by the Cauchy estimate on ${D_{\frac{3}{4}\alpha}}$,
\begin{eqnarray}\label{520}
 (r - r_+) |\partial _{\xi}^l \partial _y F^a|,~s| \partial _{\xi}^l \partial _x F^a|  ~\leq ~ c s^{m} \mu \Gamma^a(r -r_+).
\end{eqnarray}
Then we have, inductively, $|D^n \partial _\xi ^l F^a| \leq c  \mu \Gamma^a(r -r_+),$ $n \leq 4.$

Denote $\Phi _{F^a}^t =(\phi_1^a,\phi_2^a)^T$, where $\phi_1^a$, $\phi_2^a$ are components of $\Phi_{F^a}^t$ in the directions of $x,$ $y,$ respectively. Let $(x, y)$ be any point in $D_{\frac{1}{4} \alpha}$ and let $t_{*} = sup \{ t\in [0,1] : \Phi_{F^a}^t(x,y )\in D_\alpha \}$. Note that $D_\alpha \subset \hat{D}(s).$
By (\ref{519}), we have
\begin{center}
  $|\phi_1^a(x, y) - x| \leq \int_0^t |F_y ^a\circ \Phi_{F^a}^{u}|_{D_\alpha} du \leq|F_y^a| _{\hat{D}(s)}< c \Gamma^a(r-r_+) \mu< \frac{1}{8} (r - r_+),$
\end{center}
provided
\begin{eqnarray}\label{5f1}
   c \Gamma^a(r-r_+) \mu< \frac{1}{8} (r - r_+);
\end{eqnarray}
\begin{eqnarray*}
  |\phi_2^a (x, y) - y| \leq \int_0^t |F_x^a \circ \Phi_{F^a}^{u}|_{D_\alpha} du  \leq|F_x^a| _{\hat{D}(s)}< c \mu s^{m-1} \Gamma^a (r - r_+) \leq \frac{1}{8} \alpha,
\end{eqnarray*}
and provided
\begin{eqnarray}\label{5g1}
    c \mu s^{m} \Gamma^a (r - r_+) < \frac{1}{8} \alpha s.
\end{eqnarray}
Then $|\phi_1^a|< r_+ + \frac{3}{8} ( r- r_+)$ and $|\phi_2^a| < \frac{3}{8} \alpha s.$ Therefore, $\Phi _{F^a}^t : D_{\frac{1}{4} \alpha} \rightarrow D_{\frac{1}{2} \alpha} \subset D_\alpha$. Hence $t_* = 1$ and $\Phi_{F^a}^t: D_{\frac{\alpha}{4}} \rightarrow D_{\frac{\alpha}{2}}$. By the estimates of $|\partial _\xi ^l {y_{*}^a}|_{G _+^a}$, under assumption
\begin{eqnarray}\label{5e1}
c s^{m-1} \mu < \frac{1}{8} \alpha s,
\end{eqnarray}
it is easy to see $\phi^a : D_{\frac{1}{8} \alpha} \rightarrow D_{\frac{1}{4}\alpha}.$

With the standard Whitney extension theorem (see {\cite{Poschel,Stein}}), it is easy to see that $F^a$ and $y_{*}^a$ can be extended to functions of H\"{o}lder class $C^{m+3,d-1+\sigma_0}(\hat{D}(\beta_0)\times {G_0^a})$, respectively, where $0< \sigma_0 <1$ is fixed. Moreover,
\begin{align*}
 \|F^a\|_{{C^{m+3, d-1+\sigma_0}}(\hat{D}(\beta_0) \times G_0^a)} &\leq c \mu \Gamma^a(r - r_+),\\
 \|y_{*}^a\| _{C^{d-1 + \sigma_0 }(G_0^a)}&\leq c \mu \Gamma^a(r- r_+).
\end{align*}

The above imply that $\Phi_+^a: D_+ \rightarrow D_{\frac{1}{2}\alpha}$ is well defined, symplectic and real analytic for all $ \xi \in G_+^a$. We, now, consider $\Phi_+^a$ on the domain $\tilde{D}_+$.

It is easy to see that $\Phi_+^a$ maps $\hat{D}_+$ into $D(r,\beta)$ for all $\xi \in G_0^a$. We note that
\begin{center}
$\Phi _{F^a}^t = id + \int_0^t X_{F^a} \circ \Phi_{F^a}^u du$,~~~~~~~~~~$0\leq t \leq 1$,\\
$\|X_{F^a}\| _{C^{m+2, d-1+\sigma_0}(\hat{D}(\beta _0)\times G_0^a)} \leq c \|F^a\|_{C^{m+3, d-1+\sigma_0}(\hat{D}(\beta _0)\times G_0^a)}.$
\end{center}
Supposing
\begin{eqnarray}
\label{5h1}  c \mu \Gamma^a (r- r_+) &<& \frac{1}{8} (r - r_+),\\
\label{5i1} c \mu \Gamma ^a(r- r_+) &<& \beta - \beta_+,
\end{eqnarray}
and applying the Gronwall inequality and the definition of $\Phi_{F^a}^t$, inductively, we have that on $\tilde{D}_+ \times G _0^a$,
\begin{eqnarray}\label{521}
  |\Phi_{F^a}^t - id |, |\partial _y \Phi _{F^a}^t - I_{2d}|, |\partial_y ^j \Phi _{F^a}^t| \leq c\mu \Gamma^a (r-r_+).
\end{eqnarray}
With $
    \Phi_+^a - id = (\Phi_{F^a}^1 - id )\circ \phi^a +\left(
                                             \begin{array}{c}
                                               0 \\
                                               y_{*}^a
                                             \end{array}
                                           \right)
$, it is straightforward that $ \Phi _+^a = \Phi_{F^a}^1 \circ \phi^a : \hat{D}_+ \rightarrow D(r, \beta)$ is of classes $C^{m+2}$ and also depends $C ^{d-1+ \sigma_0}$ smoothly on $\xi \in G_0^a$, where $\sigma_0 $ described as above. Moreover,
\begin{center}
$\| \Phi_+^a - id\|_{C^{m+2,d-1+\sigma_0}(\tilde{D}_+ \times G_0^a)}\leq c \mu \Gamma^a(r- r_+).$
\end{center}

 Then under the symplectic transformation $\Phi _+^a = \Phi_{F^a} ^ 1\circ \phi^a,$ the new Hamiltonian reads
\begin{center}
   $ H^a\circ \Phi _+^a = N_+^a +P_+^a$,
\end{center}
where
\begin{eqnarray*}
    N_+^a &=& e_+^a + \langle \omega _+^a, y\rangle + h_+^a(y),~~~~~~P_+ ^a= \bar{P}_+^a \circ \phi^a  + \psi^a,\\
    \omega _+ ^a&=& \omega^a + P_{01}^a + {A^a} y_{*}^a +\partial _y \hat{h}^a(y_{*}^a),
\end{eqnarray*}
and $h_+ ^a(y)$, ${A^a}$, $\psi^a$ and $\hat{h}^a(y)$ have the same forms as above. Thus, the new normal form is reduced to the desired case.

With the assumption mentioned above we have $c s^{m+1} {\mu K_+^a}^{\tau+1}< \gamma - \gamma_+,$ then
\begin{eqnarray*}
  |\langle k , \omega_+^a  \rangle | > \frac{\gamma_+ }{ |k|^{\tau} } ~ for ~0< |k| \leq K_+^a,~\xi \in G_+^a.
\end{eqnarray*}

\subsubsection{Estimate on $P_+^a$}
 We know
\begin{eqnarray*}
P_+^a(\frac{x}{\lambda_1}, y, \xi) = \bar{P}_+^a \circ \phi^a+ \psi^a = (\int_0^1 \{R_t^a , F^a\}\circ \Phi _{F^a}^t dt + (P^a-R^a)\circ \Phi _{F^a}^1 )\circ \phi^a + \psi^a.
\end{eqnarray*}
By above estimates, we see that, for all $|l| \leq d$, $0\leq t \leq 1$,
\begin{eqnarray}
   \nonumber  |\partial _\xi^ l \{R_t^a , F^a\}\circ \Phi_{F^a}^t|_{{D_{\frac{1}{4}\alpha}}\times G_+^a }&\leq& c  \gamma^{d+m+5} s^{m} {\mu}^2 \Gamma^a (r- r_+),\\
   \nonumber |\partial _\xi^l(P^a-R^a) \circ \Phi_{F^a}^1|_{{D_{\frac{1}{4}\alpha}}\times G_+^a }&\leq&c \gamma^{d+m+5} s^m {\mu}^2,\\
   \nonumber|\partial_\xi^l \psi^a|~~~~~&\leq& c \gamma^{d+m+5} s^{m} {\mu}^2.
\end{eqnarray}
And by the estimate of $y_*^a$, we have
\begin{eqnarray*}
|\partial_\xi^l \phi^a |_{D_+ \times G_+^a} \leq c \gamma^{d +m +5} s^{m-1} \mu ~~for~~|l| \leq d.
\end{eqnarray*}
 Hence, by the definition of $P_+^a$,
\begin{center}
  $|\partial _\xi^l P_+^a |_{D_+ \times G _+^a} \leq c\gamma^{d+m+5} s^m {\mu}^2 (\Gamma^a (r- r_+) +2 )$, ~~~~~~$|l|\leq d$.
\end{center}

Let $c_0$ be the maximal one of the $c's$ mentioned above and define $\mu_+ = 8^m c_0 \mu^{1 + \sigma}.$ With the assumption
\begin{eqnarray}\label{5j1}
 \mu^{\sigma } (\Gamma ^a(r- r_+) +2) \leq \frac{\gamma_{+}^{d+m+5}}{\gamma^{d+m+5}},
 ~~~on~ D_{+} \times G_+^a,
\end{eqnarray}
we have
\begin{eqnarray}
\nonumber |\partial _{\xi}^l P_+^a| &\leq& 8^m c_0 s^2 {\mu}^{1+ \sigma} {\mu}^{\frac{1}{3}-2\sigma} {\mu}^{\sigma} {\gamma}^{d+m+5} (\Gamma^a(r- r_+) +2)\\
 \nonumber &\leq& c_0\gamma_{+}^{d+m+5} s_{+}^m \mu _+,~~~~~~~~|l|\leq d.
\end{eqnarray}

We now complete one KAM step.

\subsection{Iteration Lemma}
Consider (\ref{584}) and let $r_0$, $s_0$, $\gamma _0$, $\beta_0$, $\mu_0$, $N_0^a$, $e_0^a$, $\omega_0^a$, $h_0^a$, $A_0^a$, $\hat{h}_0^a$, $P_0^a$ be given as before. We have the following iteration lemma.
\begin{lemma}\label{lemma1}
If (\ref{588}) holds for a sufficiently small $\mu= \mu (r, s, d, \tau)$, then the KAM step described in subsection {\bf{\ref{KAM step}}} is valid for all $\nu =0,1,\cdots,$ and sequences
$G_\nu^a, H_\nu^a, N_\nu^a, e_\nu^a, \omega_\nu^a, h_\nu^a, A_\nu^a, \hat{h}_\nu^a, P_\nu^a, \Phi_\nu^a$, $\nu = 1,2,\cdots ,$ possess the following properties:
\begin{itemize}
\item[{\bf (1)}]$\Phi _\nu^a : \hat{D}\times G_0^a \rightarrow \hat{ D}_{\nu-1}$, $D_\nu \times G_\nu^a \rightarrow D_{\nu-1}$ is symplectic for each $\xi\in G_0^a$
 or $G_\nu^a$, and is of class $C^{m+2,d-1+\sigma_0}$, $C^{\iota, d}$, respectively, where $\iota $ stands for real analyticity, $0<\sigma_0<1$ is fixed, and
 \begin{eqnarray}\label{522}
 \|\Phi_\nu^a - id\| _{C^{m+2,d-1+\sigma_0}(\hat{D}_\nu \times G^a)} \leq \frac{\mu^{\frac{1}{2}}}{2^{\nu}}.
 \end{eqnarray}
Moreover, on $\hat{D}_\nu \times G_\nu^a$,
\begin{center}
  $H_\nu ^a= H_{\nu-1}^a\circ \Phi_{\nu}^a= N_{\nu}^a+ P_\nu^a,$
  \end{center}
  where $N_\nu^a = e_\nu^a + \langle \omega_\nu^a ,y\rangle + \frac{1}{2} \langle y, A_\nu^a y\rangle + \hat{h}^a(y),$ $A_\nu^a$ has an $n \times n $ nonsingular minor $\mathcal{A}_{\nu}^a$, which is nonsingular on $G_{\nu}^a$, $ \hat{h}^a(y)= O (|y|^3)$;
\item[{\bf (2)}]Under assumption $\bf{(H1)}$, we have $(\omega_\nu^a (\xi))_q=(\omega_{\nu-1}^a(\xi))_q $, $\forall$ $\xi \in G_\nu^a,$ $q= 1,2,\cdots n;$
\item[{\bf(3)}]For all $|l|\leq d ,$
  \begin{eqnarray}
   \label{523} |\partial_\xi^l e_\nu^a -\partial_\xi^l e_{\nu -1}^a | _{G_\nu^a} &\leq & \gamma _0^{d+m+4} \frac{\mu}{2^{\nu}};\\
  \label{524}  |\partial_\xi^l e_\nu^a -\partial_\xi^l e_0^a | _{G_\nu^a} &\leq&  \gamma _0^{d+m+4} \mu;\\
  \label{525} |\partial_\xi^l {\omega}_{\nu}^a -\partial_\xi^l {\omega}_{\nu -1}^a |_{G_\nu^a} &\leq&  \gamma _0^{d+m+4} \frac{\mu}{2^{\nu}};\\
   \label{526} |\partial_\xi^l {\omega}_\nu^a -\partial_\xi^l {\omega}_0^a |_{G_\nu^a} &\leq&  \gamma _0^{d+m+4} \mu;\\
 \label{527} |\partial _\xi^l h_\nu^a  -\partial _\xi^l  h_{\nu-1}^a |_{D_\nu \times G_\nu^a} &\leq&  \gamma _0 ^{d+m+4} \frac{{\mu}^{\frac{1}{2}}}{2^{\nu}};\\
 \label{528}  |\partial _\xi^l h_\nu^a  -\partial _\xi^l h_0^a |_{D_\nu \times G_\nu^a} &\leq&  \gamma _0 ^{d+m+4} {\mu}^{\frac{1}{2}};\\
  \label{529}  |\partial_{\xi}^l P_\nu^a|_{D_\nu \times G_\nu^a}&\leq&  \gamma_ \nu ^{d+m+5} s_\nu^2 {\mu_\nu};
 \end{eqnarray}
 \item[{\bf(4)}] $G_\nu^a = \{ \xi \in G_{\nu -1}^a: |\langle k, \omega _{\nu-1}^a(\xi)\rangle|> \frac{\gamma_{\nu-1}}{|k|^\tau},  ~for ~all~0< |k| \leq K_\nu^a\}.$
\end{itemize}
\end{lemma}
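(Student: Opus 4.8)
The plan is to prove Lemma~\ref{lemma1} by induction on the step index $\nu$, using the single KAM step of Subsection~\ref{KAM step} as the inductive engine. For the base case $\nu=0$ I would note that (\ref{507}) is exactly (\ref{588}) with $\nu=0$, and that choosing $\mu_0=\varepsilon^\sigma$ small --- equivalently $\varepsilon$ small --- makes all the smallness requirements (\ref{5a1})--(\ref{5j1}) valid at the $0$-th step. For the inductive step I would assume the Hamiltonian has been put in the form (\ref{587}) with the bound (\ref{588}) after $\nu$ cycles, apply the KAM step verbatim to produce $\Phi_{\nu+1}^a, N_{\nu+1}^a, P_{\nu+1}^a$ with $\mu_{\nu+1}=8^m c_0\mu_\nu^{1+\sigma}$, and then --- this is the real content --- check that the hypotheses (\ref{5a1})--(\ref{5j1}) propagate from step $\nu$ to step $\nu+1$ along the iteration sequences fixed in Section~\ref{5104}.

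The heart of the argument is therefore the bookkeeping for (\ref{5a1})--(\ref{5j1}). First, from $\mu_{\nu+1}=8^m c_0\mu_\nu^{1+\sigma}$ and $\mu_0=\varepsilon^\sigma$ one gets $\mu_\nu\le\mu_0^{(1+\sigma/2)^\nu}$ for $\varepsilon$ small, i.e. super-exponential decay, whereas $r_\nu,\beta_\nu,\gamma_\nu$ lose only the geometric tail $\sum 2^{-i-1}$ and stay bounded below by $\tfrac12 r_0,\tfrac12\beta_0,\tfrac12\gamma_0$, so that $\gamma-\gamma_+,\beta-\beta_+,r-r_+$ are merely exponentially small $\sim 2^{-\nu}$. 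Each of (\ref{5c1})--(\ref{5j1}) then reduces to an inequality pitting a super-exponentially small quantity (a power of $\mu_\nu$, possibly times a power of $K_{\nu+1}^a$ or of $\Gamma_\nu^a$) against an exponentially small one, which holds for all $\nu$ once $\mu_0$ is small. The one genuinely delicate point is the uniform-in-$\nu$ control of $\Gamma_\nu^a(r_\nu-r_{\nu+1})$: although its prefactor carries $\lambda_1^{-|i|}$ with $\lambda_1=\varepsilon^\alpha$ a large negative power of $\varepsilon$, the weight $e^{-|k|(r_\nu-r_{\nu+1})/(8\lambda_1)}$ decays with a rate that is itself a large power of $\varepsilon^{-1}$, so summing over $0<|k|\le K_{\nu+1}^a$ the exponential dominates and $\Gamma_\nu^a$ remains bounded (this is precisely the ``rapid rotation helps'' phenomenon announced in the introduction; the assumption (\ref{5a1}) is what licenses replacing the tail of the Fourier series by $\mu$). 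Matching the three $\varepsilon$-scalings $\gamma_0=\varepsilon^{(\frac13-\sigma)/(d+m+5)}$, $s_0=\varepsilon^{2/(3m)}$, $\mu_0=\varepsilon^\sigma$ then closes (\ref{5j1}), which delivers the bound $\gamma_{\nu+1}^{d+m+5}s_{\nu+1}^m\mu_{\nu+1}$ on $P_{\nu+1}^a$, i.e. (\ref{588}) at step $\nu+1$.

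Once the step is known to iterate, properties (1)--(4) follow essentially formally. Property (4) is just the defining formula for $G_\nu^a$ read off the KAM step. Property (2) I would extract from the frequency-retention construction of Subsection~\ref{Frequency Retention}: under $\bf{(H1)}$ the translation $\phi^a$ is chosen to solve (\ref{514}), which annihilates exactly the first $n$ components of the frequency drift, so $(\omega_{\nu+1}^a)_q=(\omega_\nu^a)_q$ for $q\le n$ on $G_{\nu+1}^a$. The per-step estimates on $e_+^a,\omega_+^a,h_+^a$ of Subsection~\ref{Estimation N_+} give (\ref{523}), (\ref{525}), (\ref{527}), the $2^{-\nu}$ gain coming from $\mu_\nu^{1/2}\le 2^{-\nu}$; telescoping yields (\ref{524}), (\ref{526}), (\ref{528}), and (\ref{529}) is (\ref{588}) restated. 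For (1) and (\ref{522}) I would combine the composition estimate $\|\Phi_{\nu+1}^a-\mathrm{id}\|\le c\mu_\nu\Gamma_\nu^a(r_\nu-r_{\nu+1})$ with the Whitney-extension bounds for $\Phi_+^a$ and the super-exponential decay of $\mu_\nu$ to see that $\Phi_0^a\circ\cdots\circ\Phi_\nu^a$ is Cauchy in $C^{m+2,d-1+\sigma_0}$ with the stated $2^{-\nu}$ bound; the nonsingularity of $\mathcal{A}_\nu^a$ persists because $A_{\nu+1}^a-A_\nu^a=O(\mu_\nu^{1/2})$ by (\ref{580a}) while $\mathcal{A}_0^a$ is nonsingular with $|{\mathcal{A}_0^a}^{-1}|$ controlled.

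I expect the main obstacle to be the second paragraph: making the inductive verification of (\ref{5a1})--(\ref{5j1}) airtight, in particular the uniform bound on $\Gamma_\nu^a$ and the compatibility of the scalings $\gamma_0,s_0,\mu_0$ in (\ref{5j1}). Everything else --- convergence of the composed transformations, the frequency-preservation bookkeeping, the telescoping of the normal-form estimates --- is routine once that control is secured.
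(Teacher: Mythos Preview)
Your proposal is correct and matches the paper's own proof: reduce the lemma to verifying that the smallness hypotheses (\ref{5a1})--(\ref{5j1}) hold at every step along the sequences of Section~\ref{5104}, using the closed form $\mu_\nu=(8^mc_0)^{((1+\sigma)^\nu-1)/\sigma}\mu_0^{(1+\sigma)^\nu}$ to pit super-exponential decay of $\mu_\nu$ against the merely exponential loss in $r_\nu-r_{\nu+1}$, $\gamma_\nu-\gamma_{\nu+1}$, and then read off (1)--(4) from the per-step estimates. One small correction to your expectation: the paper's integration-by-parts bound gives $\Gamma_\nu^a\le c\,\lambda_1^{-|i|+1}2^{\nu+6}e^{-1/(\lambda_1 2^{\nu+6})}$, which is \emph{not} uniformly bounded in $\nu$ (once $2^{\nu+6}\gtrsim\lambda_1^{-1}$ the exponential is $O(1)$ and the bound grows like $2^\nu$), so what you actually control is the product $\mu_\nu^\sigma\Gamma_\nu^a$ appearing in (\ref{5j1}), not $\Gamma_\nu^a$ alone.
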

\begin{proof}
Actually, it suffices to verify the assumptions that we put forward above for all $\nu$. For simplicity, we let $r_0 = \beta_0= 1$. By choosing $\mu_0$ small, we also see that others assumptions are hold for $\nu =0 $.

By the definition of $\mu_\nu$, we have that $\mu_\nu = (8^m c_0)^{\frac{{(1+ \sigma)}^{\nu} -1}{\sigma}} {\mu_0}^{(1+ \sigma)^{\nu}}.$ Therefore,
\begin{eqnarray}\label{531}
    \mu_{\nu} = 8^m c_0 \mu_{\nu -1}^{1+ \sigma} < \cdots < \frac{1}{{\zeta}^{\nu}} \mu_0,
\end{eqnarray}
where $\zeta \gg1$ and $\mu _0 < (\frac{1}{8^m c_0 \zeta})^{\sigma }\ll 1.$ Then assumption (\ref{5d1}) holds. With the definition of $s_\nu$, we have
 \begin{eqnarray}\label{561}
 s_\nu= {(\frac{1}{8})}^\nu (8^m c_0)^{\frac{(1 + \sigma)^\nu - (1+\sigma) -  \sigma\nu}{\sigma^2} \frac{1}{m + 1}} \mu_0^{\frac{(1+\sigma)^\nu - 1}{\sigma(m+1)}}s_0,
 \end{eqnarray}
 and thus (\ref{5e1}) is obvious. Since
\begin{eqnarray}
 \nonumber \Gamma _\nu^a &\leq& \int _1^{\infty} {t}^{(|l| +|j|+ 1)\tau + |l| +|i| + |j| +2}  \lambda_1^{ - |i|} e^{-\frac{t(r - r_+)}{8\lambda_1}} dt\\
 \nonumber  &\leq&  \lambda_1^{- |i|} ( \lambda_1 2^{\nu+6} e^{-\frac{1}{ \lambda_1 2^{\nu+6}}}\\
 \nonumber &~&~~+  \lambda_1^2 2^{2(\nu+6)} ((|l| + |j|+ 1)\tau + |j|+ |l| +|i|+2) e^{-\frac{1}{ \lambda_1 2^{\nu+6}}}\\
\nonumber &~&~~+ \cdots +  \lambda_1^{(|l| + |j|+ 1)\tau + |j|+ |l|+ |i|+2} 2^{(\nu+6)((|l| + |j|+ 1)\tau + |j|+ |l|+ |i|+2)}\\
 \nonumber &~&~~~~~~~~~((|l| + |j|+ 1)\tau + |j|+ |l| + |i|+2)! e^{-\frac{1}{ \lambda_1 2^{\nu+6}}} )\\
\nonumber &\leq&~~  c\lambda_1^{- |i| +1} 2^{\nu+6} e^{-\frac{1}{ \lambda_1(2^{\nu+6})}},
\end{eqnarray}
where $\Gamma_{\nu}^a = \Gamma_\nu^a(r_\nu - r_{\nu-1})$,
it is clear that
\begin{center}
  $\mu_{\nu}^{\sigma} {\Gamma_\nu^a} <\mu_{\nu}^{\sigma}(\Gamma_{\nu}^a +2)< \frac{1}{\zeta^{\nu \sigma}} \mu_0^\sigma ( \lambda_1^{- |i|+1} 2^{\nu+6} e^{-\frac{1}{ \lambda_1 2^{\nu+6}}} +2) < \frac{\gamma_{\nu+1}^{d+m+5}}{\gamma_{\nu}^{d+m+5}}$,
\end{center}
where we use the fact that $\lambda_1^{- |i|+1} e^{-\frac{1}{ \lambda_1 2^{\nu+6}}} \leq e^{(i - 1) (ln (i-1) 2^{\nu +6} - 1)}$.

 Therefore, (\ref{5j1}) holds for all $\nu \geq 1$. Furthermore, assumptions (\ref{5f1}), (\ref{5h1}) and (\ref{5i1}) obviously hold.

Due to
\begin{eqnarray*}
\mu_0^{\frac{(1 + \sigma)^\nu -1}{\sigma} \frac{m}{m+1} + (1 + \sigma)^\nu} < \mu_0^{\frac{(1 + \sigma)^{\nu+1} - 1}{\sigma (m+1)}}
\end{eqnarray*}
 for $\sigma = \frac{1}{2(m+1)}$, (\ref{5g1}) holds.
Next we are going to testify assumption (\ref{5a1}). Actually, we know
\begin{eqnarray*}
\int_{K_+^a}^\infty t^n e^{- t \frac{r - r_+}{4  \lambda_1}} dt &\leq& \frac{4  \lambda_1}{r - r_+} {K_+^a}^n e^{- \frac{K_+^a(r - r_+)}{4  \lambda_1}} + (\frac{4  \lambda_1}{r - r_+})^2 n {K_+^a}^{n-1} e^{- \frac{{K_+^a}(r - r_+)}{4  \lambda_1}} \\
&~&~~~~~~~+ \cdots + (\frac{4  \lambda_1}{r - r_+})^{n+1} n! e^{- \frac{{K_+^a}(r - r_+)}{4  \lambda_1}}\\
&\leq& c \frac{4  \lambda_1}{r - r_+} n! {K_+^a}^n e^{- \frac{{K_+^a}(r-r_+)}{4 \lambda_1}},
\end{eqnarray*}
and with the definition of $K_+^a$ and\\ $$\log \frac{4  \lambda_1}{ r - r_+}+ n \log K_+^a + \log n!-\frac{K_+ (r-r_+)}{4  \lambda_1} \leq  \log {\mu_\nu},$$\\ we finish the proof of assumption (\ref{5a1}).

Due to
\begin{eqnarray*}
2 \mu_\nu^{2\sigma} (M^{*a} + 1) ([\log \frac{1}{\mu_\nu}] +1 )^{3 \eta (\tau +1)} < r-r_+,
\end{eqnarray*}
we finish the proof of assumption (\ref{5c1}). Assumption (\ref{5b1}) is obvious by (\ref{580a}) and (\ref{531}), and we omit the detail.

 Above all, KAM steps described above are valid for all $\nu$, which gives the desired sequences stated in the lemma.
 Now, we accomplish the proofs of $\bf{(1)}$, $\bf{(2)}$ and $\bf{(3)}$.

 The proof of $\bf{(4)}$ is standard. The details can be found in \cite{LLL}.

\end{proof}

\subsection{Convergence and Measure Estimate}\label{5107}
Let $\Psi_{\nu }^a = \Phi_1^a \circ \Phi_2 ^a\circ \cdots \circ \Phi_\nu^a, ~ \nu = 1,2, \cdots.$ Then $\Psi_{\nu}^a: \tilde{D}_{\nu} \times G _0 ^a \rightarrow \tilde{D}_0$, and
  \begin{eqnarray}
   \nonumber H^a \circ \Psi_{\nu}^a &=& H_{\nu}^a = N_{\nu}^a+ P_{\nu}^a(\frac{x}{\lambda}, y, \xi),\\
   \nonumber  N_{\nu}^a&=& e_{\nu}^a + \langle \omega_\nu^a , y\rangle+ h_\nu^a (y, \xi),~~\nu= 0,1,\cdots,
  \end{eqnarray}
  where $\Psi _0^a = id $.

  Simply, $N_\nu^a$ converges uniformly to $N_\infty^a$, $P_\nu^a$ converges uniformly to $P_\infty^a$
 and $\partial _y^j P_\infty^a = 0.$ For details, refer to \cite{LLL}.

Hence for each $\xi\in G_{*}^a$, $T^d \times \{0\}$ is an analytic invariant torus of $H_\infty^a$ with the toral frequency $\omega_\infty^a$, which for all $k\in Z^d \backslash \{0\},~1\leq q\leq n$, by the definition of $G_\nu^a$, satisfies the fact when $\bf{(H1)}$ holds,
\begin{eqnarray}
   \nonumber |\langle k, \omega_\infty^a \rangle| > \frac{\gamma}{2|k|^\tau} , ~~(\omega_\infty ^a)_q \equiv (\omega_0^a )_q.
\end{eqnarray}

Following the Whitney extension of $\Psi_\nu^a,$ all $e_\nu^a,$ $\omega_\nu^a,$ $h_\nu^a,$ $P_\nu^a,$ $\nu = 0,1,\cdots,$ admit uniform $C^{d-1 +\sigma_0}$ extensions in $\xi \in G_0^a$ with derivatives in $\xi$ up to order $d-1$ satisfying the same estimates (\ref{523}) - (\ref{528}). Thus, $e_\infty^a$, $\omega_\infty^a$, $h_\infty^a$, $P_\infty^a$ are $C^{d-1}$ Whitney smooth in $\xi \in G_{*}^a$, and the derivatives of $e_\infty^a -e_0^a$, $\omega_\infty^a -\omega_0^a$, $h_\infty^a -h_0^a$ satisfy similar estimates as (\ref{524}), (\ref{526}), (\ref{528}). Consequently, the perturbed tori form a $C^{d-1}$ Whitney smooth family on $G_{*}^a$.

The measure estimate is the same as one in \cite{LLL}. For the sake of simplicity, we omit the details. Thus the proof of Theorem \ref{dingli9} is complete.

\section{Slow Rotation Case}\setcounter{equation}{0} \label{lower}
In this section, we consider the Hamiltonian systems with slow rotation perturbation, i.e., Hamiltonian systems of the following form:
\begin{eqnarray}\label{502}
H^b(x,y)=N^b (y) + \varepsilon P^b(\lambda_2 x,y,\varepsilon),
\end{eqnarray}
defined on the complex neighborhood $D(r,s) = \{(x,y): |{\rm Im}~x| < r, |y|<s\}$ of $\mathbb T^d \times \{0 \}  \subset\mathbb T^d \times\mathbb R^d$, where $\lambda_2 = \varepsilon^{\beta}$, $\beta \in (0, \frac{ \sigma^2 }{ (d+ m+5)\tau +d+m+9})$, $\sigma,$ $m$ and $\tau$ are defined in Theorem \ref{dingli01}, $N^b (y)$ is a real analytic function, $\det {\partial_y^2 N^b} \neq 0$, on a complex neighborhood of the bounded closed region $G^b$, $\varepsilon P^b(\lambda_2 x, y)$, a small perturbation, is a real analytic function, where $\varepsilon > 0$ is a small parameter.

Assume
\begin{itemize}
\item[\bf{(S2)}] There exists an $N > 1$ such that
\begin{eqnarray*}
{\rm rank} \{ \partial_{y}^\alpha N^b: 1\leq |\alpha|\leq N,~~~~ \forall y \in G^b \} = d.
\end{eqnarray*}
\item[\bf{(H2)}] $A^b$ has an $n \times n$ nonsingular minor $\mathcal{A}^b$.
\end{itemize}
For Hamiltonian system (\ref{502}), we have the following.

\begin{theorem}\label{dingli01}
Let $H^b$ be analytic and $\beta \in (0, \frac{ \sigma^2 }{ (d+ m+5)\tau +d+m+9})$, where $\sigma \in (0, \frac{1}{3})$, $m >1$, $\tau > d(d - 1) -1$ are given. Under assumptions $\bf{(S2)}$ and $\bf{(H2)}$, there exist a $\varepsilon_0 >0 $ and a family of Cantor sets $G_\varepsilon^b \subset G^b$, $0<\varepsilon < \varepsilon_0$, such that for each $y \in G_\varepsilon^b$, the unperturbed $d-$tours $T_{y}^b$ persists and gives rise to a real analytic invariant $d-$torus $T_{\varepsilon, y}^b$ preserving $n$ corresponding unperturbed toral frequencies. Moreover, the relative Lebesgue measure $|G^b \setminus G_\varepsilon^b |$ tends to 0 as $\varepsilon \rightarrow 0$.
\end{theorem}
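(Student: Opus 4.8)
The plan is to repeat, essentially verbatim, the Newton--KAM scheme that proves Theorem~\ref{dingli9} in Section~\ref{5105}, making the single structural substitution $\tfrac{x}{\lambda_1}\rightsquigarrow\lambda_2x$: every Taylor--Fourier expansion is now taken in the modes $e^{\sqrt{-1}\langle k,\lambda_2x\rangle}$, and the iteration sequences $r_\nu,s_\nu,\mu_\nu,\gamma_\nu,\beta_\nu,K_\nu^{b},\Gamma_\nu^{b},G_\nu^{b}$ of Section~\ref{5104} are used. First I would bring in the translation parameter $\xi\in G^{b}$ via $y\mapsto y+\xi$, so that $H^{b}=N^{b}(y,\xi)+\varepsilon P^{b}(\lambda_2x,y,\xi)$ with $N^{b}=e^{b}+\langle\omega^{b}(\xi),y\rangle+\tfrac12\langle y,A^{b}(\xi)y\rangle+\hat{h}^{b}(y,\xi)$, $\hat{h}^{b}=O(|y|^{3})$, $\omega^{b}=\partial_yN^{b}$, $A^{b}=\partial_y^{2}N^{b}$, and record the Cauchy estimate $|\partial_\xi^{l}P_0^{b}|_{D(r,s)}\le c\,\gamma_0^{d+m+5}s_0^{m}\mu_0$ for $|l|<d$, exactly as in (\ref{507}). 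The hypotheses \textbf{(S2)}, \textbf{(H2)} and $\det\partial_y^{2}N^{b}\neq0$ are then used in the slow case exactly where \textbf{(S1)} and \textbf{(H1)} were used in Section~\ref{5105}.

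For one KAM step I would truncate $P^{b}$ at $|k|\le K_+^{b}=([\tfrac1{\lambda_2}]+1)^{2}([\log\tfrac1\mu]+1)^{3\eta}$, $|\jmath|\le m$; the extra factor $([\tfrac1{\lambda_2}]+1)^{2}$ is forced here, because the truncation tail is controlled by an estimate of the form $\int_{K_+^{b}}^{\infty}t^{d}e^{-t\lambda_2(r-r_+)/4}\,dt\le\mu$, which needs $K_+^{b}\gtrsim\lambda_2^{-1}\log\tfrac1\mu$. Solving the homology equation $\{N^{b},F^{b}\}+R^{b}-[R^{b}]=0$ with $F^{b}=\sum_{0<|k|\le K_+^{b},\,|\jmath|\le m}f_{k\jmath}^{b}y^{\jmath}e^{\sqrt{-1}\langle k,\lambda_2x\rangle}$ gives $\sqrt{-1}\lambda_2\langle k,\omega^{b}+\partial_yh^{b}\rangle f_{k\jmath}^{b}=P_{k\jmath}^{b}$, so the small divisors are $L_k^{b}=\sqrt{-1}\lambda_2\langle k,\omega^{b}+\partial_yh_0^{b}\rangle$, and on the Diophantine set $G_+^{b}$ one has $|L_k^{b}|\ge\frac{\lambda_2\gamma}{2|k|^{\tau}}$. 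This lower bound carries the \emph{small} factor $\lambda_2$ --- the precise sense in which slow rotation is harmful --- so $|{L_k^{b}}^{-1}|\lesssim\frac{|k|^{\tau}}{\lambda_2\gamma}$, and after differentiating in $y$ and $\xi$ an additional power $\lambda_2^{-1}$ appears; consequently $F^{b}$ and its derivatives are bounded by $\mu$ times $\Gamma_+^{b}(r-r_+)$, which (unlike $\Gamma_+^{a}$) carries the negative-power weight $\lambda_2^{|i|-2}$ on the $|i|$-th $x$-derivative.

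From here the remaining sub-steps would be transcribed from Section~\ref{5105}: frequency retention, i.e.\ solving $\mathcal A^{b}\mathbf{y}^{b}+\partial_{\mathbf{y}^{b}}\hat{h}^{b}(\mathbf{y}^{b})=-p_{01}^{b}$ by the implicit function theorem using \textbf{(H2)} and translating $y\mapsto y+y_*^{b}$, which keeps the $n$ frequencies $(\omega_+^{b})_q=(\omega^{b})_q$, $q=1,\dots,n$; estimates on $e_+^{b},\omega_+^{b},h_+^{b}$ as in (\ref{580a}); estimates on $X_{F^{b}}$, $\Phi_{F^{b}}^{t}$, $\phi^{b}$ and $\Phi_+^{b}=\Phi_{F^{b}}^{1}\circ\phi^{b}$, with the Whitney extension to H\"older class $C^{m+2,d-1+\sigma_0}$; and the new perturbation bound $|\partial_\xi^{l}P_+^{b}|_{D_+\times G_+^{b}}\le c\,\gamma^{d+m+5}s^{m}\mu^{2}(\Gamma_+^{b}(r-r_+)+2)\le c_0\,\gamma_+^{d+m+5}s_+^{m}\mu_+$ with $\mu_+=8^{m}c_0\mu^{1+\sigma}$. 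Then I would state and prove the iteration lemma analogous to Lemma~\ref{lemma1}, deduce convergence of $\Psi_\nu^{b}=\Phi_1^{b}\circ\cdots\circ\Phi_\nu^{b}$ with $N_\nu^{b}\to N_\infty^{b}$, $P_\nu^{b}\to0$ and $\partial_y^{j}P_\infty^{b}=0$ on the limit set $G_*^{b}$, conclude that $\mathbb{T}^{d}\times\{0\}$ is an analytic invariant torus of $H_\infty^{b}$ with frequency $\omega_\infty^{b}$ obeying $|\langle k,\omega_\infty^{b}\rangle|>\frac{\gamma}{2|k|^{\tau}}$ and $(\omega_\infty^{b})_q\equiv(\omega_0^{b})_q$, and finally run the standard R\"ussmann-type measure estimate (using \textbf{(S2)}) to get $|G^{b}\setminus G_\varepsilon^{b}|\to0$ as $\varepsilon\to0$.

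The main obstacle --- and the only genuinely new ingredient --- is to verify, uniformly in $\nu$, the convergence inequality $\mu_\nu^{\sigma}(\Gamma_{\nu+1}^{b}+2)\le\gamma_{\nu+1}^{d+m+5}/\gamma_\nu^{d+m+5}$ together with the truncation condition on $K_{\nu+1}^{b}$ and the small-divisor condition $\frac{\gamma_\nu-\gamma_{\nu+1}}{(M^{*b}+1)(K_{\nu+1}^{b})^{\tau+1}}>2s_\nu$, all of which now carry negative powers of $\lambda_2=\varepsilon^{\beta}$. Integrating the series defining $\Gamma_{\nu+1}^{b}$ (the slow exponential $e^{-|k|\lambda_2(r_\nu-r_{\nu+1})/8}$ decays only on the scale $\lambda_2^{-1}2^{\nu}$) bounds it by a fixed negative power of $\lambda_2$ times a polynomial in $K_{\nu+1}^{b}$, and the factor $(K_{\nu+1}^{b})^{\tau+1}\sim\lambda_2^{-2(\tau+1)}(\cdots)$ in the small-divisor condition contributes similarly; since $\mu_\nu^{\sigma}$ decays doubly exponentially and is $\gtrsim\varepsilon^{\sigma^{2}}$ in the worst (small-$\nu$) case, while $\gamma_0=\varepsilon^{(\frac13-\sigma)/(d+m+5)}$ and $s_0=\varepsilon^{2/(3m)}$ make the $\gamma$- and $s$-factors harmless, all these inequalities hold for small $\varepsilon$ provided $\sigma^{2}>\beta\bigl[(d+m+5)\tau+d+m+9\bigr]$, i.e.\ $\beta\in\bigl(0,\frac{\sigma^{2}}{(d+m+5)\tau+d+m+9}\bigr)$ --- precisely the hypothesis of the theorem. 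I expect the careful bookkeeping of the $\lambda_2$-powers through $\Gamma_{\nu+1}^{b}$, $K_{\nu+1}^{b}$ and the initial data $\mu_0,\gamma_0,s_0$ to be the bulk of the work, the remainder being a faithful copy of Section~\ref{5105}.
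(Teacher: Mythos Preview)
Your proposal is correct and follows essentially the same route as the paper: Section~\ref{lower} is indeed a transcription of Section~\ref{5105} with the substitution $\tfrac{x}{\lambda_1}\rightsquigarrow\lambda_2x$, the enlarged truncation order $K_+^b=([\tfrac1{\lambda_2}]+1)^2([\log\tfrac1\mu]+1)^{3\eta}$, the slow small divisor $L_k^b=\sqrt{-1}\lambda_2\langle k,\omega^b+\partial_yh^b\rangle$, and the $\lambda_2^{|i|-2}$-weighted $\Gamma^b$, with the iteration lemma (Lemma~\ref{lemma21}) devoted to checking that the resulting negative powers of $\lambda_2$ are absorbed precisely when $\beta<\frac{\sigma^2}{(d+m+5)\tau+d+m+9}$. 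One small slip: differentiating $(L_k^b)^{-1}$ in $y$ or $\xi$ does \emph{not} introduce an additional $\lambda_2^{-1}$ (each derivative of $L_k^b$ carries a factor $\lambda_2|k|$, which cancels against one $\lambda_2^{-1}$ from the extra $(L_k^b)^{-1}$), so the bound in (\ref{538}) keeps a single $\lambda_2^{-1}$; this does not affect your strategy or the final $\Gamma^b$ bookkeeping.
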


First we consider the transformation: $y\rightarrow y +\xi, ~~x \rightarrow x,$  where $ \xi \in G^b$. Then Hamiltonian system (\ref{502}) reads
\begin{eqnarray}
\label{585}H^b(x,y,\xi) &=& N^b(y,\xi) + \varepsilon P^b(\lambda_2 x,y,\xi),\\
\nonumber N^b(y,\xi) &=& e^b(\xi) + \langle \omega^b(\xi), y \rangle + h^b(y,\xi),
\end{eqnarray}
where $h^b = \frac{1}{2}\langle y, A^by\rangle + \hat{h}^b$, $\hat{h}^b  = O(|y|^3),$ $\lambda_2 = \varepsilon^{\beta}$, $\beta \in (0, \frac{ \sigma^2 }{ (d+ m+5)\tau +d+m+9})$.

Denote $P_0 ^b=  \varepsilon P^b(\lambda_2 x,y,\xi)$, then with the Cauchy estimate, obviously,
\begin{eqnarray}\label{507}
|\partial_\xi^l P_0^b|_{D(r,s)} \leq c \gamma_0^{d+m+5} s_0^m \mu_0, \quad |l| < d,
\end{eqnarray}
where $c$ is a constant.

Thus, we have
\begin{eqnarray*}
H^b (x,y,\xi)&=& N_0^b (y,\xi) +  P_0^b(  \lambda_2 x,y,\xi),\\
\nonumber N^b(y,\xi) &=& e_0^b + \langle \omega_0^b(\xi), y \rangle + h_0^b(y,\xi),
\end{eqnarray*}
with
\[h_0^b(y,\xi)= \frac{1}{2}\langle y, A_0^b(\xi) y\rangle + \hat{h}_0^b(y, \xi), \quad \hat{h}_0^b (y,\xi) = O(|y|^3),
\]
 where
 $\omega_0^b(\xi) = \partial_y N_0^b (\xi)$, $A_0^b(\xi) = \partial_y ^2 N_0^b(\xi)$, $\lambda_2= \varepsilon^{\beta}$, $\beta \in (0, \frac{ \sigma^2 }{ (d+ m+5)\tau +d+m+9 })$. Moreover,
\begin{eqnarray*}
|\partial_\xi^l P_0^b|_{D(r,s)} \leq c \gamma_0^{d+m+5} s_0^m \mu_0, ~~~~~~~~~|l| < d.
\end{eqnarray*}

\subsection{KAM step}\label{KAM step 3}

Suppose that after $\nu-$th step, we have arrived at the real analytic Hamiltonian system of the following form:
\begin{eqnarray}
\label{568}H^b (x,y,\lambda,\xi)&=& N^b (y,\xi) + \varepsilon P^b(\lambda_2 x,y,\xi),\\
\nonumber N^b(y,\xi) &=& e^b + \langle \omega^b(\xi), y \rangle + h^b(y,\xi),\\
\nonumber h^b(y,\xi) &=& \frac{1}{2} \langle y , A^b y\rangle + \hat{h}^b(y,\xi),\\
\label{589} |\partial_\xi^l P^b(\lambda_2 x, y,\xi)|_{D(r,s)} &\leq& c \gamma^{d+m+5} s^m \mu, ~~~~~~~~~|l| < d,
\end{eqnarray}
where $y\in G^b \subset\mathbb R^d,$ $x \in\mathbb T^d$, $\xi \in G^b$, $\lambda = \varepsilon^{\beta}$, $\beta \in (0, \frac{ \sigma^2 }{ (d+ m+5)\tau +d+m+9})$, $\hat{h}^b = O(|y|^3)$.

By considering both averaging and translation, we will find a symplectic transformation $\Phi _ {+}^b$, which, on a small phase domain $D(r_+, s_+)$  and a smaller parameter domain $G_+^b$, transforms Hamiltonian (\ref{568}) into the Hamiltonian of the next KAM step, i.e.
 \begin{center}
  $H_{+}^b = H^b{\circ} {\Phi _+^b}= {N_+^b} +{P_+^b}$,
 \end{center}
where $N_+^b$, $P_+^b$ enjoy similar properties as $N^b$, $P^b$, respectively.

\subsubsection{{Truncation}}
Consider the Taylor - Fourier series of $P^b(\lambda_2 x,y,\xi)$,
 \begin{eqnarray*}
   P^b(\lambda_2 x,y,\xi) = \sum _{|k| \in Z^d,~ |\jmath|\in Z_{+}^d} P_{k\jmath}^b {y^\jmath} {e^{\sqrt{-1} \lambda_2\langle k, x\rangle} },
\end{eqnarray*}
and let $R^b(\lambda_2 x,y,\xi)$ be the truncation of $P^b(\lambda_2 x,y,\xi)$ of the form:
\begin{eqnarray}
\nonumber R^b(\lambda_2 x,y,\xi) = \sum _{|k| \leq K_+^b,~ |\jmath| \leq m} P_{k\jmath}^b {y^\jmath} {e^{\sqrt{-1} \lambda_2\langle k, x\rangle} }.
\end{eqnarray}
Using same techniques as ones in \cite{LLL}, on $D_{{\frac{7}{8}} \alpha},$ we have
 \begin{eqnarray}
  \nonumber |\partial_{\xi}^{l}(P ^b - R^b)|_{D_{\frac{7}{8} {\alpha}}} &\leq& c {\gamma}^{d+ m+5} {s^m} {\mu }^2,\\
   \nonumber |\partial_{\xi}^{l} R^b |_{D_{{\frac{7}{8} {\alpha}}}} &\leq&  c {\gamma} ^{d+ m +5} {s^m} {\mu},
 \end{eqnarray}
provided
\begin{eqnarray}\label{5a2}
 {\int _{K_+^b}^{\infty}} {t^{d}} {e^{-t \frac{\lambda_2(r- r_+)}{4}}} dt \leq \mu.
\end{eqnarray}

\subsubsection{Homology Equation}
Denote the homology equation by
 \begin{eqnarray}\label{5Posion2}
  \{N^b,F^b\}+ R^b-[R^b]=0,
\end{eqnarray}
where \begin{eqnarray}\label{535}
F^b = \sum _{0 < |k| \leq K_+^b,~ |\jmath| \leq m} f_{k\jmath}^b {y^\jmath} {e^{\sqrt{-1} \lambda_2\langle k, x\rangle} },
 \end{eqnarray}
and $[R^b]= \int _{\mathbb T^d} R^b dx$ is the average of truncation $R^b(\lambda_2 x, y, \xi)$.

Contracting coefficients between two sides of (\ref{5Posion2}), we have
\begin{eqnarray*}
\sqrt{-1}\lambda_2 \langle k, \omega^b+ \partial_y h^b\rangle f_{k\jmath}^b = P_{k\jmath}^b.
\end{eqnarray*}
Denote $M^{*b} = \max \limits_{|l|\leq d, |j|< m+5, |y|\leq \beta_0} |\partial_\xi^l \partial_y^j h_0^b (y,\xi)|$. Under the assumptions
\begin{eqnarray}
\label{5b2} \max\limits_{|l|\leq d, |j|< m+5} |\partial_\xi^l \partial_y^j h^b - \partial_\xi^l \partial_y^j h_0^b|_{D(s)\times G_+^b} &\leq& \mu_0^{\frac{1}{2}},\\
\label{5c2} \frac{\gamma - \gamma_+}{(M^{*b} +1){ K_+^b}^{\tau + 1}}&>& 2s,
\end{eqnarray}
using the same method as subsection ${\bf{\ref{5135}}}$, we have
\begin{eqnarray}\label{538}
 |\partial_\xi^l \partial_y^j f_{k\jmath}^b| \leq \left\{
                                                      \begin{array}{ll}
                                                        c \mu |k|^{(|j|+ |l|+1)\tau +|j|+|l|+1}s^{m - |j|} \frac{1}{\lambda_2} e^{-\lambda_2 |k|r}, & \hbox{$|j| \leq m$;} \\
                                                        c \mu |k|^{(|j|+ |l|+1)\tau +|j|+|l|+1} \frac{1}{\lambda_2} e^{-\lambda_2 |k|r}, & \hbox{$m< |j|\leq m+4$,}
                                                      \end{array}
                                                    \right.
\end{eqnarray}
for all $(y,\xi) \in D(s) \times G_+^b$, $0 < |k|\leq K_+^b$, $|l| \leq d$.

Therefore, we have
\begin{eqnarray}
\nonumber |\partial_\xi^l \partial_x^i \partial_y^j F^b| &=& |\sum_{0<|k|\leq K_+^b, |\jmath|\leq m}(\sqrt{-1} k\lambda_2)^i \partial_y^j(\partial_\xi^l f_{k\jmath}^b y^\jmath e^{\sqrt{-1} \lambda_2\langle k,x\rangle}) |\\
\label{545} &\leq& \left\{
                     \begin{array}{ll}
                       c s^{m - |j|} \mu \Gamma^b(r - r_+), & \hbox{$|j|\leq m$;} \\
                       c \mu \Gamma^b(r - r_+), & \hbox{$m<|j|\leq m+4,$}
                     \end{array}
                   \right.
\end{eqnarray}
where $\Gamma^b(r - r_+) = \sum\limits_{\substack{0<|k|\leq K_+^b,\\ |i|, |j|\leq m+4}} |k|^{(|l|+ |j|+1) \tau + |l|+ |i|+|j|+1} {\lambda_2^{|i| -2}} e^{-\frac{  |k|\lambda_2(r -r_+)}{8}}$.

\subsubsection{Frequency Retention}
Let $\mathbf{y}^b$ and $p_{01}^b$ be the vectors formed by the $n$ components of $y$ and $P_{01}^b$, respectively, and denote $ \hat{h}^b({y})= \hat{h}^b((\mathbf{y}^b,0)^T).$ Then by the implicit function theorem, the equation
\begin{eqnarray}
\label{540} \mathcal{A}^b\mathbf{y}^b + {\partial_{\mathbf{y}^b}}\hat{h}^b(\mathbf{y}^b) &=& - p_{01}^b
\end{eqnarray}
admits a unique solution $\mathbf{y}_{*}^b$ on $D(s)$, which also smoothly depends on $\xi$, where $\mathcal{A}^b$ is $n\times n$ nonsingular minor of $A^b$. Define $ y_{*}^b= (\mathbf{y}_{*}^b,0 )^T$. By (\ref{540}), we clearly have
 \begin{eqnarray*}
     A^b y_{*}^b + \partial _{y} {\hat{h}^b(y_{*}^b)} = -(p_{01}^b,0 )^T.
   \end{eqnarray*}

Then under the time $1-$map $\Phi _{F^b}^1$ of the flow generated by a Hamiltonian $F^b$ and the transformation $\phi^b : x \rightarrow x, ~y \rightarrow y+ y_{*}^b,$  we have
 \begin{eqnarray}\label{541}
    H_+^b = H^b \circ \Phi_{F^b}^1 \circ \phi  = e_{+}^b + \langle \omega_+^b , y\rangle + h_{+}^b(y)+P_+^b(\lambda_2 x,y,\xi),
 \end{eqnarray}
with
\begin{eqnarray}
 \nonumber e_+^b &=& {e}^b + \langle \omega^b, y_{*}^b\rangle +\frac{1}{2} \langle y_{*}^b ,A^b y_{*}^b\rangle + \hat{h}^b(y_{*}^b)+ [R^b](y_*^b),~~ \omega_+^b = \omega^b +  P_{01}^b -\left(
                                                   \begin{array}{c}
                                                      p_{01}^b \\
                                                     0 \\
                                                   \end{array}
                                                 \right),\\
\nonumber A_+^b &=& A^b + \partial_y^2 \hat{h}^b(y_*^b) + \partial_y^2 [R^b](y_*^b),~~ h_+^b (y) = \frac{1}{2}\langle y, A_+^b y\rangle + \hat{h}_+^b (y),~~\hat{h}_+^b = O(|y|^3),\\
\nonumber \hat{h}_+^b&=& \hat{h}^b (y+ y_{*}^b)- \hat{h}^b(y_{*}^b)- \langle\partial_{y}\hat{h}^b(y_{*}^b) , y \rangle - \frac{1}{2} \langle y, \partial_{y}^2\hat{h}^b(y_{*}^b)y \rangle + [R^b](y+y_*^b) \\
\nonumber &~&~~~ - [R^b](y_*^b)- \langle \partial_y[R^b](y_*^b), y\rangle- \frac{1}{2}\langle y, \partial_y^2 [R^b](y_*^b) y\rangle,\\
\nonumber P_+^b &=& (\int_0^1 \{R_t^b, F^b\} \circ \Phi_{F^b}^t dt+ (P^b- R^b)\circ \Phi_{F^b}^1) \circ \phi^b + \psi^b,\\
\nonumber \psi^b &=& \langle \partial_y [R^b](y_*^b), y\rangle  - \langle P_{01}^b, y\rangle.
\end{eqnarray}

\subsubsection{ Estimate on $N_+^b$}
Denote $M_{*}^b = \max\limits_{\xi \in G_0^b} |{\mathcal{A}_0^b}^{-1}(\xi)| +1$ and let $\mu_0$ small enough, say, $\mu_0 < \frac{1}{8{(M_*^b)}^2(M^{*b} +1)}$, such that $M_*^b(M^{*b} +1)s_0^2 < \frac{1}{4}$.
For $\xi \in G_+^b$, we denote
\begin{eqnarray*}
B^b(y,\xi) = \mathcal{A}^b + \int _0^1 \partial_y^2 \hat{h}^b(\theta y) d\theta.
\end{eqnarray*}
 Then, by (\ref{540}),
 \begin{eqnarray}\label{542}
  B^b(\mathbf{y}_{*}^b)\mathbf{y}_{*}^b = - p_{01}^b.
\end{eqnarray}
Under the following assumption
\begin{eqnarray}\label{5d2}
    4  M_{*}^b (M^{*b}+1)\gamma ^{d+ m+5}s^{m-1} \mu < \frac{1}{2},
\end{eqnarray}
with the similar techniques to subsection \ref{Estimation N_+}, we have
 \begin{eqnarray}
\nonumber |\partial _{\xi}^l y_{*}^b|&<& c M_{*}^b \gamma^{d + m + 5} s^{m - 1} \mu,\\
 \nonumber |\partial_{\xi}^{l} e_+^b - \partial_{\xi}^{l}e^b| _{G_+^b} &\leq& c  \gamma ^{d+m+5} s^{m-1} \mu,\\
  \nonumber |\partial_{\xi}^{l}{\omega_+^b} - \partial_{\xi}^{l}{\omega^b}| _{G_+^b} &\leq& c \gamma ^{d+m+5} s^{m-1} \mu,\\
  \nonumber  |\partial_y ^j \partial_{\xi}^{l}h_+^b  - \partial_y ^j \partial_{\xi}^{l}{h^b}| _{G_+^b} &\leq& \left\{
                                                                                                                      \begin{array}{ll}
                                                                                                                        c \gamma ^{d+m+5}s^{m-|j|} \mu, & \hbox{$|j| \leq m$;} \\
                                                                                                                         c \gamma ^{d+m+5}\mu, & \hbox{$m< |j|\leq m+4$.}
                                                                                                                      \end{array}
                                                                                                                    \right.
 \end{eqnarray}

\subsubsection{Estimate on $\Phi_+^b$}
By (\ref{545}) and the Cauchy estimate on ${D_{\frac{3}{4}\alpha}}$,
\begin{eqnarray}
\label{546}  (r - r_+) |\partial _{\xi}^l \partial _y F^b|,~s| \partial _{\xi}^l \partial _x F^b|  ~\leq ~ c s^{m} \mu \Gamma^b(r -r_+).
\end{eqnarray}
Furthermore, we have, inductively, $|D^n \partial _\xi ^l F^b| \leq c \mu \Gamma^b(r -r_+),$ $n \leq 4.$

Denote $\Phi _{F^b}^t =(\phi_1^b,\phi_2^b)^T$, where $\phi_1^b$, $\phi_2^b$ are components of $\Phi_{F^b}^t$ in the directions of $x,$ $y,$ respectively. Let $(x, y)$ be any point in $D_{\frac{1}{4} \alpha}$ and let $t_{*} = sup \{ t\in [0,1] : \Phi_{F^b}^t(x,y )\in D_\alpha \}$. We note that $D_\alpha \subset \hat{D}(s).$ With assumptions
\begin{eqnarray}
\label{5f2} c \Gamma^b(r-r_+) \mu &<& \frac{1}{8} (r - r_+),\\
\label{5g2} c \Gamma^b (r - r_+) &<& \frac{1}{8} \alpha,
\end{eqnarray}
 we have
\begin{eqnarray*}
  |\phi_1^b(x, y) - x| &\leq& \int_0^t |F_y^b \circ \Phi_{F^b}^{u}|_{D_\alpha} du \leq|F_y^b| _{\hat{D}(s)}< \frac{1}{8} (r - r_+),\\
  |\phi_2^b (x, y) - y| &\leq& \int_0^t |F_x^b \circ \Phi_{F^b}^{u}|_{D_\alpha} du  \leq|F_x^b| _{\hat{D}(s)}< \frac{1}{8} \alpha.
\end{eqnarray*}

Then $\Phi _{F^b}^t : D_{\frac{1}{4} \alpha} \rightarrow D_{\frac{1}{2} \alpha} \subset D_\alpha$.  Hence $t_* = 1$ and $\Phi_{F^b}^t: D_{\frac{\alpha}{4}} \rightarrow D_{\frac{\alpha}{2}}$. Under the assumption
\begin{eqnarray}\label{5e2}
c s^{m-1} \mu < \frac{1}{8} \alpha s,
\end{eqnarray}
 it is easy to check $\phi^b: D_{\frac{1}{8} \alpha} \rightarrow D_{\frac{1}{4}\alpha}.$

With the standard Whitney extension theorem (see {\cite{Poschel,Stein}}), it is easy to see that $F^b$ and $y_{*}^b$ can be extended to functions of H$\ddot{o}$lder class $C^{m+3,d-1+\sigma_0}(\hat{D}(\beta_0)\times {G_0^b})$, respectively, where $0< \sigma_0 <1$ is fixed. Moreover,
\begin{eqnarray}
\nonumber \|F^b\|_{{C^{m+3, d-1+\sigma_0}}(\hat{D}(\beta_0) \times G_0^b)} &\leq& c \mu \Gamma^b(r - r_+),\\
\nonumber \|y_{*}^b\| _{C^{d-1 + \sigma_0 }(G_0^b)}&\leq& c \mu \Gamma^b (r- r_+).
\end{eqnarray}
By Gronwall's inequality and assumptions
\begin{eqnarray}
\label{h1} c \mu \Gamma^b (r- r_+) &<& \frac{1}{8} (r - r_+),\\
\label{i1} c \mu \Gamma^b (r- r_+) &<& \beta - \beta_+,
\end{eqnarray}
inductively, on $\tilde{D}_+ \times G_0^b$, we have
\begin{eqnarray}\label{547}
  |\Phi_{F^b}^t - id |, |\partial _y \Phi _{F^b}^t - I_{2d}|, |\partial_y ^j \Phi _{F^b}^t| \leq c \mu \Gamma^b (r-r_+).
\end{eqnarray}
Therefore $\Phi _+^b = \Phi_{F^b}^1 \circ \phi^b : \hat{D}_+ \rightarrow D(r, \beta)$ is of classes $C^{m+2}$ and also depends $C ^{d-1+ \sigma_0}$ smoothly on $\xi \in G_0^b$, where $\sigma_0 $ is described as above. Moreover, $\| \Phi_+^b - Id\|_{C^{m+2,d-1+\sigma_0}(\tilde{D}_+ \times G_0^b)}\leq c\mu \Gamma^b(r- r_+).$

Hence, the new Hamiltonian reads
\begin{center}
   $ H^b\circ \Phi _+^b = N_+^b +P_+^b$,
\end{center}
with
\begin{eqnarray*}
    N_+^b &=& e_+^b + \langle \omega_+^b, y\rangle + h_+^b(y),\\
 \omega _+^b &=& \omega^b + P_{01}^b + {A}^b y_{*}^b +\partial_y \hat{h}^b(y_{*}^b),\\
    P_+^b &=& \bar{P}_+^b \circ \phi^b + \psi^b ,
\end{eqnarray*}
where $h_+^b (y)$, ${A}^b$, $\psi^b$, $\hat{h}^b (y)$ have the same forms as ones mentioned above, and $ |\langle k , \omega_+^b  \rangle | > \frac{\gamma_+ }{ |k|^{\tau} }$ for all $0< |k| \leq K_+^b$, $ \xi \in G _+^b$, which is obvious with the assumptions mentioned above.

\subsubsection{Estimate on $P_+^b$}
 Now
\begin{eqnarray*}
P_+^b = \bar{P}_+^b \circ \phi^b + \psi^b = (\int_0^1 \{R_t^b , F^b\}\circ \Phi _{F^b}^t dt + (P^b-R^b)\circ \Phi _{F^b}^1 )\circ \phi^b +\psi^b.
\end{eqnarray*}
By above estimates in this section, we see that, for all $|l| \leq d$, $0\leq t \leq 1$,
\begin{eqnarray}
   \nonumber  |\partial _\xi^ l \{R_t^b , F^b\}\circ \Phi_{F^b}^t|_{{D_{\frac{1}{4}\alpha}}\times G_+^b }&\leq& c  \gamma^{d+m+5} s^{m} {\mu}^2 \Gamma^b (r- r_+),\\
   \nonumber |\partial _\xi^l(P^b-R^b) \circ \Phi_{F^b}^1|_{{D_{\frac{1}{4}\alpha}}\times G_+^b }&\leq& c \gamma^{d+m+5} s^m {\mu}^2,\\
\nonumber |\partial_\xi^l \phi^b |_{D_+ \times G_+^b} &\leq& c \gamma^{d +m +5} s^{m-1} \mu,\\
   \nonumber|\partial_\xi^l \psi^b|~~~~~&\leq& c \gamma^{d+m+5} s^{m} {\mu}^2.
\end{eqnarray}
Hence, $|\partial _\xi^l P_+^b(\lambda_2 x, y,\xi) |_{D_+ \times G _+^b} \leq c \gamma^{d+m+5} s^m {\mu}^2 (\Gamma^b(r- r_+) +2 )$ for $|l|\leq d$. Let $c_0$ be the maximal one of the $c's$ we mentioned in this section  and define $\mu_+ = 8^m c_0 \mu^{1 + \sigma}.$ Under the assumption
\begin{eqnarray}\label{5j2}
 \mu^{\sigma } (\Gamma^b(r- r_+) +2) \leq \frac{\gamma_{+}^{d+m+5}}{\gamma^{d+m+5}},
 ~~~on~ D_{+} \times G_+^b,
\end{eqnarray}
we have
\begin{eqnarray}
\nonumber |\partial _{\xi}^l P_+^b| &\leq& 8^m c_0 s^m {\mu}^{1+ \sigma} {\mu}^{1-2\sigma} {\mu}^{\sigma} {\gamma}^{d+m+5} (\Gamma^b(r- r_+) +2)\\
 \nonumber &\leq& c_0\gamma_{+}^{d+m+5} s_{+}^m \mu _+,~~~~~~~~|l|\leq d.
\end{eqnarray}

We now complete one KAM step.

\subsection{Iteration Lemma}
Let $r_0$, $s_0$, $\gamma _0$, $\beta_0$, $\mu_0$, $H_0^b$, $N_0^b$, $e_0^b$, $\omega_0^b$, $h_0^b$, $A_0^b$, $\hat{h}_0^b$, $P_0^b$ be given as above.  We have the iteration lemma for (\ref{568}) as follows.
\begin{lemma}\label{lemma21}
If (\ref{589}) holds for a sufficiently small $\mu= \mu (r, s, d, \tau)$, then the KAM step described in subsection ${\bf{\ref{KAM step 3}}}$ is valid for all $\nu =0,1,\cdots,$ and sequences
$G_\nu^b,$ $H_\nu^b,$ $N_\nu^b,$ $e_\nu^b,$ $\omega_\nu^b,$ $h_\nu^b,$ $A_\nu^b,$ $\hat{h}_\nu^b,$ $P_\nu^b$, $\Phi_\nu^b$, $\nu = 1,2,\cdots ,$ possess the following properties:
\begin{itemize}
\item[{\bf (1)}]$\Phi _\nu^b: \hat{D}\times G_0^b \rightarrow \hat{ D}_{\nu-1}$, $D_\nu \times G_\nu^b \rightarrow D_{\nu-1}$ is symplectic for each $\xi\in G_0^b$ or $G_\nu^b$, and is of class $C^{m+2,d-1+\sigma_0}$, $C^{\iota, d}$, respectively, where $\iota$ stands for real analyticity, $0<\sigma_0<1$ is fixed, and
 \begin{eqnarray}\label{548}
 \|\Phi_\nu^b - id\| _{C^{m+2,d-1+\sigma_0}(\hat{D}_\nu \times G^b)} \leq  \frac{\mu^{\frac{1}{2}}}{2^{\nu}}.
 \end{eqnarray}
Moreover, on $\hat{D}_\nu \times G^b$,
\begin{center}
  $H_\nu^b = H_{\nu-1}^b\circ \Phi_{\nu}^b= N_{\nu}^b+ P_\nu^b,$
  \end{center}
  where $N_\nu^b = e_\nu^b + \langle \omega_\nu^b, y\rangle + \frac{1}{2} \langle y, A_\nu^b y\rangle + \hat{h}^b(y),$ $A_\nu^b$ has an $n \times n$ minor $\mathcal{A}_{\nu}^b$, which is nonsingular on $G_\nu^b$, $ \hat{h}^b(y)= O (|y|^3)$;
\item[{\bf (2)}]Under assumption $\bf{(H2)}$, we have $(\omega_\nu^b)_q=(\omega_{\nu-1}^b)_q $, $\forall$ $\xi \in G_\nu^b,$   $q= 1,2,\cdots, n;$
\item[{\bf(3)}]For all $|l|\leq d ,$
  \begin{eqnarray}
\label{549} |\partial_\xi^l e_\nu^b -\partial_\xi^l e_{\nu -1}^b | _{G_\nu^b}&\leq &\gamma _0^{d+m+4} \frac{\mu}{2^{\nu}};\\
\label{550}|\partial_\xi^l e_\nu^b -\partial_\xi^l e_0^b | _{G_\nu^b} &\leq& \gamma _0^{d+m+4} \mu;\\
\label{551}|\partial_\xi^l {\omega}_{\nu}^b -\partial_\xi^l {\omega}_{\nu -1}^b |_{G_\nu^b} &\leq& \gamma _0^{d+m+4} \frac{\mu}{2^{\nu}};\\
\label{552} |\partial_\xi^l {\omega}_\nu^b -\partial_\xi^l {\omega}_0^b |_{G_\nu^b}&\leq& \gamma _0^{d+m+4} \mu;\\
\label{553} |\partial _\xi^l h_\nu^b  -\partial _\xi^l  h_{\nu-1}^b |_{D_\nu \times G_\nu^b} &\leq& \gamma _0 ^{d+m+4} \frac{{\mu}^{\frac{1}{2}}}{2^{\nu}};\\
\label{554} |\partial _\xi^l h_\nu^b  -\partial _\xi^l h_0^b |_{D_\nu \times G_\nu^b} &\leq& \gamma _0 ^{d+m+4} {\mu}^{\frac{1}{2}};\\
\label{555} |\partial_{\xi}^l P_\nu^b|_{D_\nu \times G_\nu^b}&\leq& \gamma_ \nu ^{d+m+5} s_\nu^m {\mu_\nu};
 \end{eqnarray}
 \item[{\bf(4)}] \begin{eqnarray*}
G_\nu^b &=& \left\{ \xi \in G _{\nu -1}^b : |\langle k, \omega _{\nu-1}^b(\xi)\rangle|> \frac{\gamma_{\nu-1}}{|k|^\tau},  ~for ~all~0< |k| \leq K_\nu^b \right\}.
 \end{eqnarray*}
\end{itemize}
\end{lemma}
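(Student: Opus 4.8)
\medskip
\noindent\emph{Proof strategy.} The plan is to imitate the proof of Lemma \ref{lemma1}. The KAM step of Subsection \ref{KAM step 3} was carried out under a finite list of smallness hypotheses, namely the truncation bound \eqref{5a2}, the closeness bound \eqref{5b2}, the small--divisor/step--size bound \eqref{5c2}, the invertibility bound \eqref{5d2}, the domain bounds \eqref{5e2}, \eqref{5f2}, \eqref{5g2}, \eqref{h1}, \eqref{i1}, and the iteration bound \eqref{5j2}. Hence it suffices to show that, after normalizing $r_0=\beta_0=1$, all of these hold \emph{simultaneously for every} $\nu\ge 0$ provided $\mu_0=\varepsilon^{\sigma}$ (equivalently $\varepsilon$) is small enough. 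The base case $\nu=0$ is immediate from the choices $\gamma_0=\varepsilon^{\frac{\frac13-\sigma}{d+m+5}}$, $s_0=\varepsilon^{\frac{2}{3m}}$, $\mu_0=\varepsilon^{\sigma}$ of Section \ref{5104} together with the a priori bound \eqref{507}. For the inductive step I would first record, exactly as in Lemma \ref{lemma1}, the super--geometric decay
\[
\mu_\nu=8^mc_0\mu_{\nu-1}^{1+\sigma}=(8^mc_0)^{\frac{(1+\sigma)^\nu-1}{\sigma}}\mu_0^{(1+\sigma)^\nu}<\zeta^{-\nu}\mu_0,\qquad \zeta\gg1,
\]
the explicit formula for $s_\nu$, and the facts $r_\nu-r_{\nu+1}=2^{-(\nu+2)}$, $\gamma_\nu\downarrow\gamma_0/2>0$, $\beta_\nu\downarrow\beta_0/2>0$; these alone dispose of \eqref{5b2}, \eqref{5d2}, \eqref{5e2} and, once the per--step bounds are available, of the telescoped estimates in items (1)--(3).

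The one genuinely new point -- and the step I expect to be the main obstacle -- is the control of $\Gamma^b_\nu$, which differs from its rapid--rotation analogue $\Gamma^a_\nu$ in two essential ways: the prefactor is now $\lambda_2^{|i|-2}$, a \emph{large} quantity since $\lambda_2=\varepsilon^{\beta}\ll 1$, and the Fourier cutoff $K_{\nu+1}^b=([1/\lambda_2]+1)^2([\log 1/\mu_\nu]+1)^{3\eta}$ acquires the extra factor $[1/\lambda_2]^2$ precisely because the exponential weight $e^{-|k|\lambda_2(r-r_+)/8}$ decays slowly in $|k|$. Bounding the inner sum over $k$ by the corresponding integral and using $r-r_+=2^{-(\nu+2)}$, one obtains, for $|l|\le d$ and $|i|,|j|\le m+4$, an estimate of the shape
\[
\Gamma^b_\nu\ \le\ c\,\lambda_2^{-[(d+m+5)\tau+d+m+9]}\,2^{c\nu},
\]
so $\Gamma^b_\nu$ carries a \emph{fixed} negative power of $\lambda_2=\varepsilon^{\beta}$ of order $(d+m+5)\tau+d+m+9$ together with only a geometric growth $2^{c\nu}$ in $\nu$. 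Consequently, after choosing $\zeta$ large enough to absorb $2^{c\nu}$,
\[
\mu_\nu^{\sigma}\bigl(\Gamma^b_\nu+2\bigr)\ \le\ \mu_0^{\sigma}\,\zeta^{-\nu\sigma}2^{c\nu}\Bigl(\lambda_2^{-[(d+m+5)\tau+d+m+9]}+2\Bigr)\ \le\ c\,\varepsilon^{\,\sigma^2-\beta[(d+m+5)\tau+d+m+9]}.
\]
The standing hypothesis $\beta\in\bigl(0,\tfrac{\sigma^2}{(d+m+5)\tau+d+m+9}\bigr)$ of Theorem \ref{dingli01} is exactly what makes this exponent positive, so the right--hand side is $\le\varepsilon^{\varkappa}$ for some $\varkappa>0$; since $\gamma_{\nu+1}^{d+m+5}/\gamma_\nu^{d+m+5}\ge (\tfrac12)^{d+m+5}$ for all $\nu$, inequality \eqref{5j2} follows for all $\nu$ once $\varepsilon$ is small. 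The same mechanism -- a fixed negative power of $\lambda_2$ beaten by the gain $\mu_0^{\sigma}=\varepsilon^{\sigma^2}$, and a $2^{c\nu}$ growth beaten by the super--geometric decay of $\mu_\nu$ and $s_\nu$ -- settles \eqref{5a2} (where the enlarged cutoff $K_{\nu+1}^b$ and the small $\lambda_2$ in the exponent $e^{-t\lambda_2(r-r_+)/4}$ combine to leave an exponentially small tail) as well as \eqref{5c2}, \eqref{5f2}, \eqref{5g2}, \eqref{h1}, \eqref{i1}; for these I would simply transcribe the corresponding computations from the proof of Lemma \ref{lemma1}, replacing $\lambda_1^{-|i|}$ by $\lambda_2^{|i|-2}$ and $1/\lambda_1$ by $\lambda_2$ inside the exponentials.

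Once every assumption is verified, the KAM step of Subsection \ref{KAM step 3} runs for all $\nu$ and produces the advertised sequences. Item (1) and the per--step bounds \eqref{548}, \eqref{549}--\eqref{555} are then read directly off the construction, the estimates \eqref{550}, \eqref{552}, \eqref{554} following by summing geometric series and the $C^{m+2,d-1+\sigma_0}$ regularity of $\Phi_\nu^b$ coming from the Whitney extension used in the step. Item (2) holds by construction: at each step the translation $\phi^b:y\mapsto y+y_*^b$ is determined by \eqref{540} precisely so as to annihilate the first $n$ components of the frequency drift, and this uses the nonsingularity of the minor $\mathcal A^b$ postulated in $\mathbf{(H2)}$, while the closeness estimate \eqref{5b2} keeps $\mathcal A_\nu^b$ nonsingular along the iteration -- just as in Lemma \ref{lemma1}. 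Item (4) is merely the definition of $G_\nu^b$ together with the fact that each excised resonant layer is controlled; the argument is the one in \cite{LLL} quoted in the proof of Lemma \ref{lemma1}, and I would only indicate the substitutions $K_+^a\mapsto K_+^b$ and $\omega^a\mapsto\omega^b$. The convergence $\Psi_\nu^b\to\Psi_\infty^b$ and the measure estimate $|G^b\setminus G_\varepsilon^b|\to0$ are then obtained exactly as in Subsection \ref{5107}, and are left to the discussion following the lemma.
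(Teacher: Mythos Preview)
Your proposal is correct and follows essentially the same approach as the paper. Both proofs reduce the lemma to verifying the list of smallness assumptions \eqref{5a2}--\eqref{5j2} for all $\nu$, both identify the estimate of $\Gamma_\nu^b$ as the key new point, and both obtain (via bounding the sum by an integral) a bound of the form $\Gamma_\nu^b\le c\,2^{c\nu}\lambda_2^{-P}$ with $P$ of order $(d+m+5)\tau+d+m+O(1)$, so that the hypothesis $\beta<\sigma^2/[(d+m+5)\tau+d+m+9]$ makes $\mu_\nu^\sigma\Gamma_\nu^b$ small; the remaining assumptions are then disposed of by the same arguments as in Lemma~\ref{lemma1}. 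The only cosmetic differences are that the paper uses the exact formula $\mu_\nu^\sigma=(8^mc_0)^{(1+\sigma)^\nu-1}\mu_0^{\sigma(1+\sigma)^\nu}$ rather than your cruder bound $\mu_\nu<\zeta^{-\nu}\mu_0$, and writes out the explicit algebraic inequalities needed for \eqref{5c2} and \eqref{5g2} (in particular isolating the auxiliary condition $\beta<1/(3m(\tau+1))$, which is implied by the theorem's hypothesis on $\beta$), whereas you subsume these under ``the same mechanism''.
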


\begin{proof}
Actually, it suffices to verify the assumptions that we put forward in Section \ref{lower} for all $\nu$. For simplicity, we let $r_0 = \beta_0= 1$. By choosing $\mu_0$ small, we also see that other assumptions are hold for $\nu =0 $. By the definition of $\mu_\nu$,
\begin{eqnarray}\label{557}
    \mu_{\nu} = 8^m c_0 \mu_{\nu -1}^{1+ \sigma} < \cdots < \frac{1}{{\zeta}^{\nu}} \mu_0,
\end{eqnarray}
where $\zeta \gg1$ and \begin{eqnarray}\label{558}
  \mu _0 < (\frac{1}{8^m c_0 \zeta})^{\sigma }\ll 1.
\end{eqnarray}
Then assumption (\ref{5d2}) holds. Besides, using (\ref{561}), (\ref{5e2}) is obvious. Note
\begin{eqnarray*}
&~&\int_{K_+^b}^\infty t^n e^{- \frac{t \lambda_2 (r -r_+)}{4}} d t \\
&\leq& \frac{4}{\lambda_2 (r -r_+)} {K_+^b}^n e^{-\frac{K_+^b \lambda_2 (r- r_+)}{4}} + \frac{4^2}{\lambda_2^2 (r -r_+)^2} n {K_+^b}^{n-1} e^{-\frac{K_+^b \lambda_2 (r- r_+)}{4}}+ \cdots\\
&~&+\frac{4^{n+1}}{\lambda_2^{n+1} (r -r_+)^{n+1}} n! e^{-\frac{K_+^b \lambda_2 (r- r_+)}{4}}\\
&\leq& \frac{4^{n+1}}{\lambda_2^{n+1} (r -r_+)^{n+1}} {K_+^b}^n n! e^{-\frac{K_+^b \lambda_2 (r- r_+)}{4}}
\end{eqnarray*}
and
\begin{eqnarray*}
\log \frac{4^{n+1}}{\lambda_2^{n+1} (r -r_+)^{n+1}} + n \log {K_+^b} + \log n!  - \frac{K_+^b \lambda_2(r -r_+)}{4} <\log \mu,
\end{eqnarray*}
(\ref{5a2}) is obvious. The proof of (\ref{5c2}) is equivalent to one of
\begin{eqnarray}\label{559}
2 s (M^* + 1 ) {K_+^b}^{\tau + 1} < r -r_+.
\end{eqnarray}
With the definition of $s_\nu$ and $K_+^b$, for $\beta \in (0, \frac{1}{3 m(\tau+1)})$ we have
\begin{eqnarray*}
2 (\frac{1}{8})^\nu (8^m C_0)^{(\frac{(1 + \sigma)^\nu - (1+\sigma)}{\sigma^2} - \frac{\nu}{\sigma}) \frac{1}{m+1}} {\mu_0}^{\frac{(1 + \sigma)^\nu - 1}{\sigma} \frac{1}{m + 1}} s_0 (M^{*b} + 1) ([\frac{1}{\lambda_2}] +1)^{2(\tau +1)}\\
 ([\log \frac{1}{(8^m C_0)^{\frac{(1 + \sigma)^\nu -1}{\sigma}}}] +[\log(\frac{1}{\mu_0})^{(1 + \sigma)^\nu}]  +3 )^{3 \eta (\tau +1)} < r -r_+,
\end{eqnarray*}
 and thus (\ref{559}) holds.
Since
\begin{eqnarray}
 \nonumber \Gamma _\nu^b &\leq& \int _1^{\infty} {t}^{(|l| +|j|+ 1)\tau + |l| +|i| + |j| +2}\lambda_2^{|i| - 2} e^{-\frac{t\lambda_2(r - r_+)}{8}} dt\\
 \nonumber  &\leq& \lambda_2^{|i| - 2} ( \frac{ 2^{\nu+6}}{\lambda_2}  e^{-\frac{\lambda_2}{2^{\nu+6}}} +  \frac{2^{2(\nu+6)}}{\lambda_2^2} ((|l| + |j|+ 1)\tau + |j|+ |l|+ |i|+2) e^{-\frac{\lambda_2}{2^{\nu+6}}}\\
\nonumber &~&~~+ \cdots + \frac{2^{(\nu+6)((|l| + |j|+ 1)\tau + |j|+ |l|+ |i|+2)}}{\lambda_2^{(|l| + |j|+ 1)\tau + |j|+ |l|+ |i|+2}}\\
\nonumber &~&~~~~((|l| + |j|+ 1)\tau + |j|+ |l|+ |i|+2)! e^{-\frac{\lambda_2}{2^{\nu+6}}} )\\
\nonumber &\leq&~~ c \frac{2^{(\nu+6)((|l| + |j|+ 1)\tau + |j|+ |l|+ |i|+2)}}{\lambda_2^{(|l| + |j|+ 1)\tau + |j|+ |l|+4}}\\
 \nonumber&~&~~~~~((|l| + |j|+ 1)\tau + |j|+ |l| + |i|+2)! e^{-\frac{\lambda_2}{2^{\nu+6}}},
\end{eqnarray}
where $\Gamma_{\nu}^b = \Gamma^b(r_\nu - r_{\nu-1})$,
it is clear that
\begin{eqnarray*}
\mu_{\nu}^{\sigma} {\Gamma_\nu^b} &<& (8^m c_0)^{{(1+ \sigma)}^{\nu} -1} {\mu_0}^{\sigma (1+ \sigma)^{\nu}} \frac{2^{(\nu+6)((|l| + |j|+ 1)\tau + |j|+ |l| + |i|+2)}}{\lambda_2^{(|l| + |j|+ 1)\tau + |j|+ |l|+4}}\\
&~&\cdot ((|l| + |j|+ 1)\tau + |j|+ |l|+|i|+2)! e^{-\frac{\lambda_2}{2^{\nu+6}}}.
\end{eqnarray*}
When $\beta \in (0, \frac{ \sigma^2 }{ (|l|+ |j|+1)\tau +|l|+|j|+5})$, with sufficiently small $\varepsilon$ we have
\begin{eqnarray*}
\mu_{\nu}^{\sigma} {\Gamma_\nu^b} &=& \varepsilon^{( (1+\sigma)^\nu - \varrho) \sigma^2 }  (8^m c_0)^{{(1+ \sigma)}^{\nu} -1} 2^{(\nu+6)((|l| + |j|+ 1)\tau + |j|+ |l|+ |i|+2)}\\
&~& ((|l| + |j|+ 1)\tau + |j|+ |l|+ |i|+2)! e^{-\frac{\lambda}{2^{\nu+6}}}\\
&\leq&\frac{\gamma_{\nu+1}^{d +m +5}}{\gamma_{\nu}^{d +m +5}},
\end{eqnarray*}
where $\varrho = \frac{(|l|+ |j|+1)\tau +|l|+|j|+4}{(|l|+ |j|+1)\tau +|l|+|j|+5}$. Therefore, (\ref{5j2}) and (\ref{5f2}) hold for all $\nu \geq 1$. Besides, \begin{eqnarray*}
(1+ \sigma)^\nu (3 m^2+ (3 m^2 + 3m)(\sigma - \sigma^2) - 3m(1+\sigma) ) - m^2  + 3m -2>0,
\end{eqnarray*}
where $\sigma = \frac{1}{2(m+1)}$, (\ref{5g2}) holds.  Assumption (\ref{5b2}) is obvious, and we omit the detail.

 Above all, the KAM steps described in Section \ref{lower} are valid for all $\nu$, which gives the desired sequences stated in Lemma \ref{lemma21}.
 Now, we accomplish the proofs of $\bf{(1)}$, $\bf{(2)}$ and $\bf{(3)}$.

 The proof of $\bf{(4)}$ is standard. The details can be found in \cite{LLL}.
\end{proof}

The convergences and measure estimates are similar to subsection \ref{5107}. And we omit the detail. Thus the proof of Theorem \ref{dingli01} is complete.

\section{Multiscale Rotation Case}\setcounter{equation}{0} \label{couple}
In this section we show the persistence of invariant tori for a Hamiltonian system with multiscale rotation perturbation, i.e., (\ref{5131}), which is equal to (\ref{5132}).

In this section, we define $\partial_\xi^l = \partial_{\xi_1}^{l_1} \partial_{\xi_2}^{l_2} \partial_{\xi_3}^{l_3}, |l| =|l_1| + |l_2|+ |l_3|,$ $|i| = |i_1|+ |i_2|+|i_3|$, $|\mathbf{k}| = |k_1|+|k_2|+|k_3|,$ $|\imath| = |\imath_1|+|\imath_2|+|\imath_3|,$ $|\kappa| = |\kappa_1|+|\kappa_2|+|\kappa_3|,$ and denote $P_0^d=  \varepsilon P(\frac{x}{\lambda_1},y,\theta,\eta,\lambda_2 \varphi,I,\xi)$. By the Cauchy estimate,
\begin{eqnarray*}
|\partial_\xi^l P_0^d|_{D^d(r,s)} \leq c \gamma_0^{d+m+5} s_0^{m} \mu_0, ~~~~~~~~~|l| < d,
\end{eqnarray*}
where $c$ is a constant.

In other words, we have
\begin{eqnarray}
\label{5113} H ^d(\frac{x}{\lambda_1},y,\theta,\eta,\lambda_2 \varphi,I,\xi)&=& N_0^d(y,\eta, I, \xi) +P_0^d(\frac{x}{\lambda_1},y,\theta,\eta,\lambda_2 \varphi,I,\xi),~~\\
\nonumber N_0^d(y,\eta,I,\xi) &=& e_0^d + \langle \mathbf{a}_0, \mathbf{b}\rangle + h_0^d(y,\eta,I,\xi),\\
 \nonumber  h_0^d(y,\eta,I,\xi) &=& \frac{1}{2}\langle {\mathfrak{A}_0^d} \mathbf{b}, \mathbf{b}\rangle + \hat{h}_0^d(y, \eta, I,\xi),
\end{eqnarray}
where $\hat{h}_0^d (y, \eta, I,\xi)$ are all terms with the form of $y^{\iota_1} \eta^{\iota_2} I^{\iota_3}$, $|\iota_1|+ |\iota_2|+|\iota_3|\geq 3$, in $N_0^d$,  $\lambda_1 = \varepsilon^{\alpha}$, $\lambda_2 = \varepsilon^\beta,$ $\alpha \in\mathbb R_+^1$, and $\beta \in (0, \frac{\sigma^2}{3[(d+m+5)\tau + d+ 2m +13]})$. Moreover,
\begin{eqnarray}\label{5114}
|\partial_\xi^l P_0^d |_{D^d(r,s)} \leq c \gamma_0^{d+m+ 5} s_0^{ m} \mu_0, ~~~~~~~~~|l| < d.
\end{eqnarray}

\subsection{KAM step}\label{KAM step3}
Suppose that after $\nu-$th step, we have arrived at the real analytic Hamiltonian system of the following form:
\begin{eqnarray}
\label{5115} H^d (\frac{x}{\lambda_1},y,\theta,\eta,\lambda_2 \varphi,I,\xi)&=& N_\nu^d(y,\eta, I, \xi) +P_\nu^d(\frac{x}{\lambda_1},y,\theta,\eta,\lambda_2 \varphi,I,\xi),\\
\nonumber N_\nu^d(y,\eta, I, \xi) &=& e_\nu^d + \langle \mathbf{a}_\nu, \mathbf{b}\rangle + h_\nu^d(y,\eta,I,\xi),\\
 \nonumber  h_\nu^d (y,\eta,I,\xi) &=& \frac{1}{2}\langle \mathbf{b}, \mathfrak{A}_\nu^d \mathbf{b}\rangle+ \hat{h}_\nu^d(y, \eta, I, \xi),
\end{eqnarray}
where $\hat{h}_\nu^d (y, \eta, I)$ are all terms with the form of $y^{\iota_1} \eta^{\iota_2} I^{\iota_3}$, $|\iota_1|+ |\iota_2|+|\iota_3|\geq 3$, in $N_\nu^d$, $\lambda_1 = \varepsilon^{\alpha}$, $\lambda_2 = \varepsilon^\beta,$ $\alpha \in\mathbb R_+^1$, and $\beta \in (0, \frac{\sigma^2}{3[(d+m+5)\tau + d+ 2m +13]})$. Moreover,
\begin{eqnarray}\label{5116}
|\partial_\xi^l P_\nu^d|_{D^d(r,s)} \leq c \gamma_0^{d+m+5} s_0^{ m} \mu_0, ~~~~~~~~~|l| < d.
\end{eqnarray}

By considering both averaging and translation, we need to find a symplectic transformation $\Phi _ {+}^d$, which, on a small phase domain $D^d(r_+, s_+)$  and a smaller parameter domain $G_+^d$, transforms Hamiltonian (\ref{5115}) into the Hamiltonian of the next KAM step, i.e.
 \begin{center}
  $H_{+}^d = H^d{\circ} {\Phi _+^d}= {N_+^d} +{P_+^d}$,
 \end{center}
where $N_+^d$, $P_+^d$ enjoy similar properties as $N^d$, $P^d$, respectively.

\subsubsection{{Truncation}}\label{5136}
Consider the Taylor - Fourier series of $P^d (\frac{x}{\lambda_1},y,\theta,\eta,\lambda_2 \varphi,I,\xi)$,
 \begin{eqnarray*}
    P^d= \sum _{\substack{|k_1|, |k_2|, |k_3| \in Z^d,\\ |\imath_1|, |\imath_2|, |\imath_3|\in Z_{+}^d}} P_{k_1k_2k_3\imath_1\imath_2\imath_3}^d {y^{\imath_1} \eta^{\imath_2} I^{\imath_3}} {e^{\sqrt{-1} \frac{\langle k_1, x\rangle}{\lambda_1}} e^{\sqrt{-1} \langle k_2, \theta \rangle} e^{\sqrt{-1}\lambda_2 \langle k_3, \varphi\rangle} },
\end{eqnarray*}
and let $R^d (\frac{x}{\lambda_1},y,\theta,\eta,\lambda_2 \varphi,I,\xi)$ be the truncation of $P^d (\frac{x}{\lambda_1},y,\theta,\eta,\lambda_2 \varphi,I,\xi)$ of the form:
\begin{equation}
\nonumber     R^d= \sum _{|\mathbf{k}| \leq K_+^b, |\imath|\leq m} P_{k_1k_2k_3\imath_1\imath_2\imath_3}^d {y^{\imath_1} \eta^{\imath_2} I^{\imath_3}} {e^{\sqrt{-1} \frac{\langle k_1, x\rangle}{\lambda_1}} e^{\sqrt{-1} \langle k_2, \theta \rangle} e^{\sqrt{-1}\lambda_2 \langle k_3, \varphi\rangle} }.
\end{equation}

With the help of the Cauchy estimate and the following assumptions
\begin{eqnarray}
\label{5d4'''} {\int _{\frac{{K_+^b}}{3}}^{\infty}} {t^{d}} {e^{- \frac{ t (r- r_+)}{4\lambda_1}}} dt \leq \mu^{\frac{1}{3}},\\
\label{5d4'}{\int _{\frac{{K_+^b}}{3}}^{\infty}} {t^{d}} {e^{- \frac{ \lambda_2 t(r- r_+)}{4}}}dt \leq \mu^{\frac{1}{3}},\\
\label{5d4''}{\int_{\frac{{K_+^b}}{3}}^{\infty}} {t^{d}} {e^{-\frac{t(r- r_+)}{4}}} dt \leq \mu^{\frac{1}{3}}
\end{eqnarray}
 \noindent on $D_{{\frac{7}{8}} \alpha}^d,$ we have
 \begin{eqnarray}
  \nonumber |\partial_{\xi}^{l}P^d- \partial_{\xi}^{l}R^d|_{D_{\frac{7}{8} {\alpha}}^d} &\leq& c {\gamma}^{d+ m+ 5} {s^{m}} {\mu }^2,\\
   \nonumber |\partial_{\xi}^{l}R^d |_{D_{{\frac{7}{8} {\alpha}}}^d} &\leq&  c  {\gamma} ^{d+ m +5} {s^{m}} {\mu}.
 \end{eqnarray}

\subsubsection{Homology Equation}\label{homology equation}
Consider the following homology equation:
 \begin{eqnarray}\label{5Posion4}
  \{N^d,F^d\}+ R^d-[R^d]=0,
\end{eqnarray}
where \begin{eqnarray}
\nonumber F^d =\sum _{0< |\mathbf{k}|\leq K_+^b, |\imath|\leq m} f_{k_1k_2k_3\imath_1\imath_2\imath_3}^d {y^{\imath_1} \eta^{\imath_2} I^{\imath_3}} {e^{\sqrt{-1} \frac{\langle k_1, x\rangle}{\lambda_1}} e^{\sqrt{-1} \langle k_2, \theta \rangle} e^{\sqrt{-1}\lambda_2 \langle k_3, \varphi\rangle} },
 \end{eqnarray}
and $[R^d]= \int _{\mathbb T^d \times\mathbb T^d \times\mathbb T^d} R^d dx d\theta d\varphi$ is the average of truncation $R^d$. In view of (\ref{5Posion4}), comparing coefficients, we have
\begin{eqnarray*}
L_k^d f_{k_1k_2k_3\imath_1\imath_2\imath_3} = P_{k_1k_2k_3\imath_1\imath_2\imath_3},
\end{eqnarray*}
where
\begin{eqnarray*}
L_k^d &=& \frac{1}{\lambda_1 } \langle k_1, \omega+ \partial_y\langle\mathbf{b}, \mathfrak{A}^d \mathbf{b} \rangle+ \partial_y \hat{h}\rangle+ \langle k_2 , \Lambda+ \partial_\eta \langle\mathbf{b}, \mathfrak{A}^d \mathbf{b} \rangle +\partial_y \hat{h}  \rangle\\
&~&  + \lambda_2 \langle k_3, \Omega+ \partial_I \langle\mathbf{b}, \mathfrak{A}^d \mathbf{b} \rangle + \partial_I \hat{h}\rangle.
\end{eqnarray*}

Denote by $M^{*d} = \max\limits_{|l|\leq d, |i|< m+5}  \{\partial_{\xi}^l \partial_{y}^{i_1} \partial_{\eta}^{i_2}\partial_{I}^{i_3} h (y, \eta, I,\xi)\}$. With the assumptions
\begin{eqnarray}
\label{5b4} \max\limits_{|l|\leq d,|i|< m+5} |\partial_\xi^l \partial_y^{i_1}\partial_\eta^{i_2} \partial_I^{i_3} h^d - \partial_\xi^l \partial_y^{i_1}\partial_\eta^{i_2} \partial_I^{i_3} h_0^d|_{D^d(s)\times G_+^d} &\leq& \mu_0^{\frac{1}{2}},\\
\label{5c4} \frac{\gamma - \gamma_+}{(M^{*d} +1) {K_+^b}^{\tau + 1}}&>& 5s,
\end{eqnarray}
similarly, we have
\begin{eqnarray}
\nonumber&~& |\partial_\xi^l \partial_y^{i_1} \partial_\eta^{i_2}\partial_I^{i_3} f_{k_1k_2k_3\imath_1\imath_2\imath_3}^d|_{D^d(s) \times G_+^d}\\
  &\leq& \left\{
         \begin{array}{ll}
           |\tilde{\lambda}|^{-1} |k|^{(|l|+|i|+1) \tau +|l|+ |i|} \mu s^{m - |i|} e^{-\frac{\delta_1 k_1 r}{\lambda_1}} \\e^{-\delta_2 k_2 r} e^{-\lambda_2 \delta_1 k_3 r}, & \hbox{$|i| \leq m;$} \\
           |\tilde{\lambda}|^{-1} |k|^{(|l|+|i|+1) \tau +|l|+ |i|} \mu e^{-\frac{\delta_1 k_1 r}{\lambda_1}}\\e^{-\delta_2 k_2 r} e^{-\lambda_2 \delta_1 k_3 r}, & \hbox{$m< |i| \leq m+4,$}
         \end{array}
       \right.~~~~~~~
\end{eqnarray}
for $0 < |\mathbf{k}|\leq K_+^b$, $|l| \leq d$. Therefore,
\begin{eqnarray}\label{5118}
 |\partial_\xi^l \partial_x^{\kappa_1} \partial_y^{i_2} \partial_\theta^{\kappa_2} \partial_\eta^{i_2} \partial_\varphi^{\kappa_3} \partial_I^{i_3} F^d|  \leq \left\{
                                                                                                                                                     \begin{array}{ll}
                                                                                                                                                       c s^{m - |i|} \mu \Gamma^d(r - r_+), & \hbox{$|i| \leq m$;} \\
                                                                                                                                                       c \mu \Gamma^d(r - r_+), & \hbox{$m<|i| \leq m+4$,}
                                                                                                                                                     \end{array}
                                                                                                                                              \right.~~
\end{eqnarray}
where
\begin{eqnarray*}
\Gamma^d(r - r_+) &=&  \sum_{\substack{0<|\mathbf{k}|\leq K_+^b\\|i|,|\kappa|\leq m+4\\}} \frac{|\mathbf{k}|^{(|l|+|i|+1)\tau+ |l|+|i|+1}}{{\lambda_2}^{\delta_3}} |\frac{\delta_1 k_1}{\lambda_1}|^{\kappa_1} |\delta_2 k_2|^{\kappa_2} |\delta_3 \lambda_2 k_3|^{\kappa_3}\\&~&e^{-\frac{\delta_1k_1(r -r_+)}{8 \lambda_1}} e^{-\frac{\delta_2k_2 (r-r_+)}{8}} e^{-\frac{\lambda_2 \delta_3 k_3 (r -r_+)}{8}}.
\end{eqnarray*}
\subsubsection{Frequency Retention}\label{Frequency Retention1}
Under the time $1-$map $\Phi _{F^d}^1$ of the flow generated by a Hamiltonian $F^d$ and the transformation $\phi^d : x \rightarrow x,$ $y \rightarrow y+ y_{*}^d,$ $\theta \rightarrow \theta$, $\eta \rightarrow \eta+\eta_*^d$, $\varphi \rightarrow \varphi,$ $I\rightarrow I+I_*^d$, we have
\begin{eqnarray*}
H_+^d &=& H^d \circ \Phi_{F^d}^1 \circ \phi\\
  &=& e^d + \langle \mathbf{a}, \mathbf{b}+ \mathbf{b_*}\rangle+\frac{1}{2}\langle \mathbf{b} + \mathbf{b_*},
\mathfrak{A}^d  (\mathbf{b} + \mathbf{b_*}) \rangle  +\hat{h}^d(y+y_*^d, \eta+\eta_*^d, I+I_*^d)\\
&~&+ [R^d](y + y_*^d,\eta+ \eta_*^d, I+ I_*^d) +\bar{P}_+^d(x, y,\theta,\eta,\varphi,I,\xi)\\
 &=& e^d +\langle \mathbf{a}, \mathbf{b_*} \rangle  + \frac{1}{2}\langle \mathbf{b_*},
\mathfrak{A}^d \mathbf{b_*}\rangle+ \hat{h}(y_*^d, \eta_*^d, I_*^d)+ [R](y_*^d, \eta_*^d, I_*^d) + \langle \mathbf{a}, \mathbf{b} \rangle \\
  &~&+ \frac{1}{2} \langle \mathfrak{A}^d \mathbf{b_*},\mathbf{b} \rangle+ \langle\mathbf{\partial\hat{h}}, \mathbf{b} \rangle +\langle \mathbf{P}, \mathbf{b} \rangle +\frac{1}{2}\langle \mathbf{b},
\mathfrak{A}^d \mathbf{b}\rangle +\frac{1}{2}\langle \mathbf{b},  \hat{\mathfrak{h}} \mathbf{b}\rangle \\
&~& + \frac{1}{2}\langle \mathbf{b}, [\mathfrak{R}] \mathbf{b}\rangle+\hat{h}(y+ y_*^d, \eta+\eta_*^d,I+I_*^d)- \hat{h}(y_*^d,\eta_*^d,I_*^d)- \langle \mathbf{\partial \hat{\mathbf{h}}}, \mathbf{b} \rangle\\
&~&- \frac{1}{2}\langle\mathbf{b},  \hat{\mathfrak{h}} \mathbf{b}\rangle +[R](y+y_*^d, \eta+ \eta_*^d, I+I_*^d) - [R](y_*^d,\eta_*^d,I_*^d)\\
&~&  - \langle  \mathbf{\partial[R]}, \mathbf{b}\rangle-\frac{1}{2}\langle \mathbf{b},
 [\mathfrak{R}] \mathbf{b}\rangle -  \langle \mathbf{P},  \mathbf{b} \rangle + \langle \mathbf{\partial [R]}, \mathbf{b}\rangle + \bar{P}_+^d \circ \phi,
\end{eqnarray*}
where $ \bar{P}_+^d(\frac{x}{\lambda_1},y,\theta,\eta,\lambda_2 \varphi,I \xi) = \int_0^1 \{R_t^d, F^d\} \circ \Phi_{F^d}^t dt+ (P^d- R^d)\circ \Phi_{F^d}^1,$ $R_t^d = t R^d+(1 - t) [R^d].$

Let $\mathcal{A}^d$ be an $n \times n$ nonsingular minor of $\mathfrak{A}^d$. Then, for $ \mathfrak{A}^d$, there is a orthogonal matrix $T$ such that
\begin{eqnarray*}
 T^{-1}\mathfrak{ A}^d T = \left(
                                                                                                           \begin{array}{cc}
                                                                                                             \mathcal{A}^d & 0 \\
                                                                                                             0 & 0 \\
                                                                                                           \end{array}
                                                                                                         \right)
 .
\end{eqnarray*}
  Denote $\mathbf{b}^d = (y^d, \eta^d, I^d)^T$ and $\mathbf{p}^d = (p_{000100}^d, p_{000010}^d, p_{000001}^d)^T$, which are the vectors formed by the first $n$ components of $T^{-1}\mathbf{b}T$ and $T^{-1} \mathbf{P} T$, respectively, where the number of the components of $y^d$ and $p_{000100}^d$ are $n_1$, the number of the components of $\eta^d$ and $p_{000010}^d$ are $n_2$, the number of the components of $I^d$ and $p_{000001}^d$ are $n_3$, and $n = n_1+n_2+n_3$, and denote $ \hat{h}^d(y, \eta, I)= \hat{h}^d((y^d,0)^T, (\eta^d,0)^T, (I^d,0)^T).$ Then by the implicit function theorem, the equation
\begin{eqnarray}\label{5117}
\mathcal{A}^d \mathbf{b}^d +\mathbf{\partial_d \hat{h}} = \mathbf{p}^d
\end{eqnarray}
admits a unique solution $(\bar{y}_{*}^d, \bar{\eta}_*^d, \bar{I}_*^d)$ on $D(s)$, which also smoothly depends on $\xi$. Define $\mathbf{b}_{*}^d = (y_{*}^d, \eta_{*}^d, I_{*}^d)= ((\bar{y}_{*}^d,0 )^T, (\bar{\eta}_{*}^d,0 )^T, ( \bar{I}_{*}^d,0 )^T)$. By (\ref{5117}), we have
\begin{eqnarray*}
\mathfrak{A}^d \mathbf{b}_*^d+ \partial \mathbf{\hat{h}} =\mathbf{p_1},
\end{eqnarray*}
where $\mathbf{p_1} = ((p_{000100},0)^T, (p_{000010},0)^T, (p_{000001},0)^T)^T$.

Actually, the unique solution of (\ref{5117}), $(\bar{y}_{*}^d, \bar{\eta}_*^d, \bar{I}_*^d)$, is the translation of the transformation $\phi^d$, $(y_*^d, \eta_*^d, I_*^d)$. And to simplify the symbol we denote $(\bar{y}_{*}^d, \bar{\eta}_*^d, \bar{I}_*^d)$ by $(y_*^d, \eta_*^d, I_*^d)$. Then
 \begin{eqnarray*}
    H_+^d = N_+^d + P_+^d= e_{+}^d + \langle \mathbf{a}_+, \mathbf{b} \rangle + h_{+}^d(y, \eta, I)+P_+^d(x,y,\theta,\eta,\varphi,I,\xi),
 \end{eqnarray*}
where
\begin{eqnarray*}
 \nonumber e_+^d &=& {e}^d + \langle  \mathbf{a},\mathbf{b}_*  \rangle +\frac{1}{2} \langle\mathfrak{A}^d  \mathbf{b}_*,  \mathbf{b}_*\rangle + \hat{h}^d(y_{*}^d,\eta_{*}^d, I_{*}^d) + [R^d](y_{*}^d,\eta_{*}^d, I_{*}^d),\\
 \mathbf{a}_+ &=& \mathbf{a} + \mathbf{P}  - \mathbf{p_1},~~ \mathfrak{A}_+^d =\mathfrak{A}^d + \hat{\mathfrak{h}}^d + [\mathfrak{R}],~~h_+^d =  \frac{1}{2}\langle \mathbf{b}, \mathfrak{A}_+^d \mathbf{b}\rangle + \hat{h}_+^d,\\
\nonumber ~~ \\
\nonumber \hat{h}_+^d &=& \hat{h}^d(y+ y_{*}^d, \eta+\eta_*^d, I+I_*^d)- \hat{h}^d(y_{*}^d,\eta_{*}^d,I_{*}^d)- \langle \mathbf{\partial \hat{h}}, \mathbf{b} \rangle - \frac{1}{2}\langle \mathbf{b}, \hat{\mathfrak{h}} \mathbf{b}\rangle\\
&~&+ [R^d](y+y_*^d,\eta+\eta_*^d, I+I_*^d)- [R^d](y_*^d, \eta_*^d, I_*^d)- \langle\mathbf{ \partial [R]},  \mathbf{b}\rangle- \frac{1}{2}\langle \mathbf{b}, [\mathfrak{R}] \mathbf{b}\rangle,\\
\nonumber P_+^d &=& \bar{P}_+^d (x,y,\theta,\eta,\varphi,I,\xi) \circ \phi^d + \psi^d,\\
\psi^d &=& -  \langle \mathbf{P},  \mathbf{b}\rangle + \langle\mathbf{\partial [R]}, \mathbf{b}\rangle = \sum_{2\leq |J| \leq m, |J-1|\leq m+1} \left(
                                             \begin{array}{c}
                                               J \\
                                               1 \\
                                             \end{array}
                                           \right)
\langle \mathbf{c}, \mathbf{b}\rangle,
\end{eqnarray*}
where $\mathbf{c} = (P_{000100} {y_*^d}^{J-1},  P_{000010} {\eta_*^d}^{J-1}, P_{000001} {I_*^d}^{J-1})^T$.

\subsubsection{ Estimate on $N_+^d$} \label{5137}
Denote $M_{*}^d = \max\limits_{\xi \in G_0^d} |{\mathcal{A}^d}^{-1}(\xi)| +1$ and let $\mu_0$ small enough, say, $\mu_0 < \frac{1}{8{M_*^d}^2(M^{*d} +1)}$, such that $M_*^d(M^{*d} +1)s_0^2 < \frac{1}{4}$.
For $\xi \in G_+^d$, denote
\begin{eqnarray*}
B^d = \mathcal{A}^d + S \hat{h}.
\end{eqnarray*}
 Then, by (\ref{5117}),  $B^d \mathbf{b}_*^d = -\mathbf{p}^d.$  In the same way as subsection \ref{Estimation N_+}, under the assumption
\begin{eqnarray}\label{5d4}
    4  M_{*}^d (M^{*d}+1)\gamma ^{d+ m+5}s^{m-1} \mu < \frac{1}{2},
\end{eqnarray}
we have
 \begin{eqnarray}
\nonumber  |\partial_\xi^l \mathbf{b}_*^d|&<& c M_{*}^d \gamma^{d + m +5} s^{m - 1} \mu,\\
 \nonumber |\partial_{\xi}^{l}{e_+^d} - \partial_{\xi}^{l}e^d| _{G_+^d} &\leq& c \gamma ^{d+m+5} s^{m-1} \mu,\\
  \nonumber |\partial_{\xi}^{l}{\omega_+^d} - \partial_{\xi}^{l}{\omega^d}| _{G_+^d} &\leq& c \gamma ^{d+m+5} s^{m-1} \mu,\\
 \label{580} |\partial_y ^{i_1} \partial_\eta^{i_2} \partial_I^{i_3} \partial_{\xi}^{l}({h_+^d}(y) - {h^d}(y))| _{G_+^d} &\leq& \left\{
                                                                                                                       \begin{array}{ll}
                                                                                                                         c \gamma ^{d+m+5} s^{m - |i|}\mu, & \hbox{$|i| \leq m$;} \\
                                                                                                                         c \gamma ^{d+m+5}\mu, & \hbox{$m<|i| \leq m+4$.}
                                                                                                                       \end{array}
                                                                                                                     \right.~~~~~~~
 \end{eqnarray}

\subsubsection{Estimate on $\Phi_+^c$}\label{5138}
Denote $\Phi_+^d = \Phi_{F^d}^1 \circ \phi$ and
\begin{eqnarray}\label{5120}
   \Phi _{F^d}^t = id + \int _0 ^t X_{F^d} \circ \Phi_{F^d} ^{u} d u,
\end{eqnarray}
where $X_{F^d} = (F_y^d, -F_x^d, F_\eta^d, -F_\theta^d, F_I^d, -F_\varphi^d)^T$ denotes the vector field generated by $F^d$, the estimate of $\Phi_+^d$ is intimately tied to one of $X_{F^d}$, implying the essentiality for $X_{F^d}$.

By (\ref{5118}), $|\partial _\xi ^l F^d |_{D_{\frac{7}{8}\alpha}^d} \leq c s^{m} \mu \Gamma^d(r -r_+),$ and thus, by the Cauchy estimate on ${D_{\frac{3}{4}\alpha}^d}$,
\begin{eqnarray}\label{5119}
\nonumber (r - r_+) |\partial _{\xi}^l \partial_I F^d|,~~ (r - r_+) |\partial _{\xi}^l \partial _\eta F^d|, ~~(r - r_+) |\partial _{\xi}^l \partial _y F^d|,~ \\
s| \partial _{\xi}^l \partial _\varphi F^d|,~~s| \partial _{\xi}^l \partial _\theta F^d|, ~~s| \partial _{\xi}^l \partial _x F^d|  ~\leq ~ c s^{m} \mu \Gamma^d(r -r_+).
\end{eqnarray}
Then we have, inductively, $|D^n \partial _\xi ^l F^d| \leq c  \mu \Gamma^d(r -r_+),$ $n \leq 4.$

Denote $\Phi _{F^d}^t =(\phi_1^d,\phi_2^d,\phi_3^d,\phi_4^d,\phi_5^d,\phi_6^d)^T$, where $\phi_1^d$, $\phi_2^d$, $\phi_3^d$, $\phi_4^d$, $\phi_5^d$, $\phi_6^d$  are components of $\Phi_{F^d}^t$ in the directions of $x,$ $y,$ $ \theta$, $\eta$, $\varphi$, $I$, respectively. Let $(x, y,\theta,\eta,\varphi,I)$ be any point in $D_{\frac{1}{4}\alpha}^d$ and let $t_{*} = sup \{ t\in [0,1] : \Phi_{F^d}^t(x,y )\in D_\alpha^d \}$. We note that $D_\alpha^d \subset \hat{D}^d(s).$
By (\ref{5120}), we have
\begin{eqnarray*}
  |\phi_1^d(x, y) - x| &\leq &\int_0^t |F_y ^d\circ \Phi_{F^d}^{u}|_{D_\alpha^d} du \leq|F_y^d| _{\hat{D}^d(s)}< \frac{1}{8} (r - r_+),\\
   |\phi_2^d (x, y) - y| &\leq & \int_0^t |F_x^d \circ \Phi_{F^d}^{u}|_{D_\alpha^d} du  \leq|F_x^d| _{\hat{D}^d(s)}<  \frac{1}{8} \alpha,\\
  |\phi_3^d(x, y) - \theta|&\leq & \int_0^t |F_\eta ^d\circ \Phi_{F^d}^{u}|_{D_\alpha^d} du \leq|F_\eta^d| _{\hat{D}^d(s)}< \frac{1}{8} (r - r_+),\\
   |\phi_4^d (x, y) - \eta|&\leq &\int_0^t |F_\theta^d \circ \Phi_{F^d}^{u}|_{D_\alpha^d} du  \leq|F_\theta^d| _{\hat{D}^d(s)}< \frac{1}{8} \alpha,\\
  |\phi_5^d(x, y) - \varphi|&\leq &\int_0^t |F_I ^d\circ \Phi_{F^d}^{u}|_{D_\alpha^d} du \leq|F_I^d| _{\hat{D}^d(s)}< \frac{1}{8} (r - r_+),\\
  |\phi_6^d (x, y) - I|&\leq &\int_0^t |F_\varphi^d \circ \Phi_{F^d}^{u}|_{D_\alpha^d} du  \leq|F_\varphi^d| _{\hat{D}^d(s)}<  \frac{1}{8} \alpha,
\end{eqnarray*}
provided
\begin{eqnarray}
\label{5f4}  c \Gamma^d(r-r_+) \mu&<& \frac{1}{8} (r - r_+),\\
\label{5g4} c \mu s^{m} \Gamma^d(r - r_+) &<& \frac{1}{8} \alpha s.
\end{eqnarray}
Then $|\phi_1^d|, |\phi_3^d|, |\phi_5^d|< r_+ + \frac{3}{8} ( r- r_+)$ and $|\phi_2^d|, |\phi_4^d|, |\phi_6^d| < \frac{3}{8} \alpha s.$ Therefore, $\Phi _{F^d}^t : D_{\frac{1}{4}\alpha}^d \rightarrow D_{\frac{1}{2} \alpha}^d \subset D_\alpha^d$. Hence $t_* = 1$ and $\Phi_{F^d}^t: D_{\frac{\alpha}{4}}^d \rightarrow D_{\frac{\alpha}{2}}^d$. With assumption
\begin{eqnarray}\label{5e4}
c s^{m-1} \mu < \frac{1}{8} \alpha s,
\end{eqnarray}
we have $\phi^d: D_{\frac{1}{8} \alpha}^d \rightarrow D_{\frac{1}{4}\alpha}^d.$

With the standard Whitney extension theorem, it is easy to see that $F^d$ and $y_{*}^d$ can be extended to functions of H$\ddot{o}$lder class $C^{m+3,d-1+\sigma_0}(\hat{D}^d(\beta_0)\times {G_0^d})$, respectively, where $0< \sigma_0 <1$ is fixed. Moreover,
\begin{eqnarray}
\nonumber \|F^d\|_{{C^{m+3, d-1+\sigma_0}}(\hat{D}^d(\beta_0) \times G_0^d)} &\leq& \mu \Gamma^d(r - r_+),\\
\nonumber \|y_{*}^d\| _{C^{d-1 + \sigma_0 }(G_0^d)}&\leq& c \mu \Gamma^d(r- r_+).
\end{eqnarray}

Thus $\Phi_+^d: D_+^d \rightarrow D_{\frac{1}{2}\alpha}^d$ is well defined, symplectic and real analytic for all $ \xi \in G_+^d$. It is easy to see that $\Phi_+^d$ maps $\hat{D}_+^d$ into $D^d(r,\beta)$ for all $\xi \in G_0^d$. We note that
\begin{center}
$\Phi _{F^d}^t = id + \int_0^t X_{F^d} \circ \Phi_{F^d}^u du$,~~~~~~~~~~$0\leq t \leq 1$,\\
$\|X_{F^d}\| _{C^{m+2, d-1+\sigma_0}(\hat{D}^d(\beta _0)\times G_0^d)} \leq c \|F^d\|_{C^{m+3, d-1+\sigma_0}(\hat{D}^d(\beta _0)\times G_0^d)}.$
\end{center}
Supposing
\begin{eqnarray}
\label{5h3}  c \mu \Gamma^d (r- r_+) &<& \frac{1}{8} (r - r_+),\\
\label{5i3} c \mu \Gamma ^d (r- r_+)  &<& \beta - \beta_+,
\end{eqnarray}
and applying the Gronwall inequality and the definition of $\Phi_{F^d}^t$, inductively, we have that on $\tilde{D}_+^d \times G _0^d$,
\begin{eqnarray}\label{574}
  |\Phi_{F^d}^t - id |, |\partial _y \Phi _{F^d}^t - I_{2d}|, |\partial_y ^j \Phi _{F^d}^t| \leq c\mu \Gamma^d (r-r_+).
\end{eqnarray}
Then $\Phi _+^d = \Phi_{F^d}^1 \circ \phi^d: \hat{D}_+^d \rightarrow D^d(r, \beta)$ is of classes $C^{m+2}$ and also depends $C^{d-1+ \sigma_0}$ smoothly on $\xi \in G_0^d$, where $\sigma_0 $ is described as above in Section \ref{couple}. Moreover, $\| \Phi_+^d - id\|_{C^{m+2,d-1+\sigma_0}(\tilde{D}_+^d \times G_0^d)}\leq c \mu \Gamma^d (r- r_+).$

 Thus, under the symplectic transformation $\Phi _+^d = \Phi_{F^d} ^ 1\circ \phi^d,$ the new Hamiltonian reads
\begin{center}
   $ H^d\circ \Phi _+^d = N_+^d +P_+^d$,
\end{center}
where
\begin{eqnarray*}
    N_+^c &=& e_+^d + \langle \mathbf{a}_+ , \mathbf{b}\rangle + h_+^d,~~~~~~P_+ ^d = \bar{P}_+^d \circ \phi^d + \psi^d,\\
\mathbf{a}_+  &=&  \mathbf{a} + \mathbf{P}
     + \mathfrak{A}^d \mathbf{b}_*^d
      + \mathbf{\partial \hat{h}},
\end{eqnarray*}
and $h_+ ^d$, $\mathfrak{A}^d$, $\psi^d$ and $\hat{h}^d$ have the same forms as above.  Furthermore, under the assumptions mentioned in this section, it is easy to check that for $0< |\mathbf{k}| \leq K_+^b,~\xi \in G_+^d$
\begin{eqnarray*}
  |\frac{\delta_1}{\lambda_1}\langle k_1, \omega_+ \rangle+ \delta_2 \langle k_2, \Lambda_+ \rangle+\delta_3 \lambda_2 \langle k_3, \Omega_+ \rangle)| > \frac{|\tilde{\lambda}|\gamma}{|\mathbf{k}|^{\tau}}.
\end{eqnarray*}

\subsubsection{Estimate on $P_+^d$}
Note
\begin{eqnarray*}
P_+^d = \bar{P}_+^d \circ \phi^d+ \psi^d = \left(\int_0^1 \{R_t^d , F^d\}\circ \Phi _{F^d}^t dt + (P^d-R^d)\circ \Phi _{F^d}^1 \right )\circ \phi^d + \psi^d.
\end{eqnarray*}
By the above estimates, we see that, for all $|l| \leq d$, $0\leq t \leq 1$,
\begin{eqnarray}
   \nonumber  |\partial _\xi^ l \{R_t^d, F^d\}\circ \Phi_{F^d}^t|_{{D_{\frac{1}{4}\alpha}^d}\times G_+^d }&\leq& c  \gamma^{d+m+5} s^{m} {\mu}^2 \Gamma^d (r- r_+),\\
   \nonumber |\partial _\xi^l(P^d -R^d) \circ \Phi_{F^d}^1|_{{D_{\frac{1}{4}\alpha}^d}\times G_+^d }&\leq&c \gamma^{d+m+5} s^{m} {\mu}^2,\\
   \nonumber|\partial_\xi^l \psi^d|~~~~~&\leq& c \gamma^{d+m+5} s^{m} {\mu}^2.
\end{eqnarray}
Moreover, we have
\begin{eqnarray*}
|\partial_\xi^l \phi^d |_{D_+^d \times G_+^d} \leq c \gamma^{d +m +5} s^{m-1} \mu ~for ~|l| \leq d.
\end{eqnarray*}
 Hence, by the definition of $P_+^d$,
\begin{center}
  $|\partial _\xi^l P_+^d |_{D_+^d \times G _+^d} \leq c\gamma^{d+m+5} s^{m} {\mu}^2 (\Gamma^d (r- r_+) +2 )$, ~~~~~~$|l|\leq d$.
\end{center}

Let $c_0$ be the maximal one of the $c's$ we mentioned above in this section and define $\mu_+ = 8^m c_0 \mu^{1 + \sigma}.$ With the assumption
\begin{eqnarray}\label{5j4}
 \mu^{\sigma } (\Gamma ^d(r- r_+) +2) \leq \frac{\gamma_{+}^{d+m+5}}{\gamma^{d+m+5}},
 ~~~on~ D_{+}^d \times G_+^d,
\end{eqnarray}
we have
\begin{eqnarray}
\nonumber |\partial _{\xi}^l P_+^d| &\leq& 8^m c_0 s^{m} {\mu}^{1+ \sigma} {\mu}^{1-2\sigma} {\mu}^{\sigma} {\gamma}^{d+m+5} (\Gamma^d(r- r_+) +2)\\
 \nonumber &\leq& c_0\gamma_{+}^{d+m+5} s_{+}^{m} \mu _+,~~~~~~~~|l|\leq d.
\end{eqnarray}

We now complete one KAM step.

\subsection{Iteration Lemma}
Let $r_0$, $s_0$, $\gamma _0$, $\beta_0$, $\mu_0$, $N_0^d$, $e_0^d$, $\omega_0^d$, $h_0^d$, $\mathfrak{A}_0^d$, $\hat{h}_0^d$, $P_0^d$ be given as above. And let $\hat{D}_0^d = D^d(r_0, \beta_0)$. For (\ref{5115}) we have:
\begin{lemma}\label{balala}
If (\ref{5116}) holds for a sufficiently small $\mu= \mu (r, s, d, \tau)$, then the KAM step described in subsection \ref{KAM step3} is valid for all $\nu =0,1,\cdots,$ and sequences
$G_\nu^d, H_\nu^d, N_\nu^d, e_\nu^d, \omega_\nu^d, h_\nu^d, \mathfrak{A}_\nu^d, \hat{h}_\nu^d, P_\nu^d, \Phi_\nu^d$, $\nu = 1,2,\cdots ,$ possess the following properties:
\begin{itemize}
\item[{\bf (1)}]$\Phi _\nu^d : \hat{D}^d \times G_0^d \rightarrow \hat{ D}_{\nu-1}^d$, $D_\nu^d \times G_\nu^d \rightarrow D_{\nu-1}^d$ is symplectic for each $\xi\in G_0^d$
 or $G_\nu^d$, and is of class $C^{m+2,d-1+\sigma_0}$, $C^{\iota, d}$, respectively, where $\iota$ stands for real analyticity, $0<\sigma_0<1$ is fixed, and
 \begin{eqnarray}\label{575}
 \|\Phi_\nu^d - id\| _{C^{m+2,d-1+\sigma_0}(\hat{D}_\nu^d \times G^d)} \leq \frac{\mu^{\frac{1}{2}}}{2^{\nu}}.
 \end{eqnarray}
Moreover, on $\hat{D}_\nu^d \times G^d$,
\begin{center}
  $H_\nu^d= H_{\nu-1}^d\circ \Phi_{\nu}^d= N_{\nu}^d+ P_\nu^d,$
  \end{center}
  where
  $ N_\nu^d = e_\nu^d + \langle \mathbf{a}_\nu , \mathbf{b}\rangle + \frac{1}{2} \langle \mathbf{b}, \mathfrak{A}_\nu^d \mathbf{b}\rangle + \hat{h}^d(y),$  $\mathfrak{A}_\nu^d$ has an $n \times n $ minor $\mathcal{A}_{\nu}^d$, which is nonsingular on $G_{\nu}^d$, $ \hat{h}^d$ are all terms with the form of $y^{\iota_1}\eta^{\iota_2}I^{\iota_3}$, $|\iota_1|+|\iota_2|+|\iota_3| \geq 3$, in $N_{\nu}^d$;
\item[{\bf (2)}]Under assumption $\bf{(A1)}$, we have $(\mathbf{a}_\nu)_q=(\mathbf{a}_\nu)_q $, $\forall$ $\xi \in G_\nu^d,$ $q= 1,2,\cdots,n;$
\item[{\bf(3)}]For all $|l|\leq d ,$
  \begin{eqnarray}
   \label{576} |\partial_\xi^l e_\nu^d -\partial_\xi^l e_{\nu -1}^d | _{G_\nu^d} &\leq & \gamma _0^{d+m+4} \frac{\mu}{2^{\nu}};\\
  \label{577}  |\partial_\xi^l e_\nu^d -\partial_\xi^l e_0^d | _{G_\nu^d} &\leq&  \gamma _0^{d+m+4} \mu;\\
  \label{578} |\partial_\xi^l {\omega}_{\nu}^d -\partial_\xi^l {\omega}_{\nu -1}^d |_{G_\nu^d} &\leq&  \gamma _0^{d+m+4} \frac{\mu}{2^{\nu}};\\
   \label{579} |\partial_\xi^l {\omega}_\nu^d -\partial_\xi^l {\omega}_0^d |_{G_\nu^d} &\leq&  \gamma _0^{d+m+4} \mu;\\
 \label{581} |\partial _\xi^l h_\nu^d  -\partial _\xi^l  h_{\nu-1}^d |_{D_\nu^d \times G_\nu^d} &\leq&  \gamma _0 ^{d+m+4} \frac{{\mu}^{\frac{1}{2}}}{2^{\nu}};\\
 \label{582}  |\partial _\xi^l h_\nu^d  -\partial _\xi^l h_0^d|_{D_\nu^d \times G_\nu^d} &\leq&  \gamma _0 ^{d+m+4} {\mu}^{\frac{1}{2}};\\
  \label{583}  |\partial_{\xi}^l P_\nu^d|_{D_\nu^d \times G_\nu^d}&\leq&  \gamma_ \nu ^{d+m+5} s_\nu^m {\mu_\nu};
 \end{eqnarray}
 \item[{\bf(4)}]  $G_{\nu+1}^d = \left \{\xi \in G_\nu^d: |\frac{\delta_1}{\lambda_1}\langle k_1, \omega \rangle+ \delta_2 \langle k_2, \Lambda \rangle+\delta_3 \lambda_2 \langle k_3, \Omega \rangle)| > \frac{|\tilde{\lambda}|\gamma}{|\mathbf{k}|^{\tau}},\quad for~ all ~~0<|\mathbf{k}|\leq K_{\nu+1}^b \right\}.$
\end{itemize}
\end{lemma}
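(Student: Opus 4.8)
The plan is to run the induction on $\nu$ exactly in the spirit of the proofs of Lemmas \ref{lemma1} and \ref{lemma21}: it suffices to check that all the smallness hypotheses introduced inside the KAM step of subsection \ref{KAM step3} --- the truncation inequalities (\ref{5d4'''})--(\ref{5d4''}), the homological/small-divisor conditions (\ref{5b4})--(\ref{5c4}), the translation bound (\ref{5d4}), the flow-domain conditions (\ref{5f4})--(\ref{5g4}), (\ref{5e4}), (\ref{5h3})--(\ref{5i3}), and the key perturbation-decay inequality (\ref{5j4}) --- remain valid at every step once $\mu_0=\varepsilon^{\sigma}$ is taken small, i.e.\ once $\varepsilon$ is small. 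Granting this, the KAM step applies for all $\nu$, producing the sequences $G_\nu^d,\ H_\nu^d,\ N_\nu^d,\ e_\nu^d,\ \mathbf a_\nu,\ h_\nu^d,\ \mathfrak A_\nu^d,\ \hat h_\nu^d,\ P_\nu^d,\ \Phi_\nu^d$, and the estimates (\ref{575})--(\ref{583}) are read off by telescoping the per-step bounds of subsections \ref{5137}--\ref{5138}.

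Normalize $r_0=\beta_0=1$. The case $\nu=0$ is immediate: (\ref{5114}) gives (\ref{5116}), and choosing $\mu_0$ small makes the remaining conditions hold at $\nu=0$ just as in the two preceding sections. For the inductive step I use the already-established facts $\mu_\nu=(8^mc_0)^{((1+\sigma)^\nu-1)/\sigma}\mu_0^{(1+\sigma)^\nu}<\zeta^{-\nu}\mu_0$ with $\zeta\gg1$, the formula (\ref{561}) for $s_\nu$, and the monotonicity of $\beta_\nu,\gamma_\nu$ and $\alpha_\nu=\mu_\nu^{1/(m+1)}$. Then (\ref{5d4}), (\ref{5e4}) follow from the super-exponential smallness of $\mu_\nu s_\nu^{m-1}$; (\ref{5c4}) reduces, as (\ref{559}) did, to $5s_\nu(M^{*d}+1)(K_{\nu+1}^b)^{\tau+1}<r_\nu-r_{\nu+1}$, which holds by the definitions of $K_{\nu+1}^b=([1/\lambda_2]+1)^2([\log 1/\mu_\nu]+1)^{3\eta}$ and of $s_\nu$; (\ref{5b4}) follows from (\ref{580}) together with the bound on $\mu_\nu$. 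The three truncation inequalities (\ref{5d4'''})--(\ref{5d4''}) are each an integral estimate of exactly the type used for (\ref{5a1}) and (\ref{5a2}), now carried out with the three exponential rates $\tfrac{r-r_+}{\lambda_1}$, $\lambda_2(r-r_+)$ and $r-r_+$ respectively, and the logarithmic comparison against $\log\mu^{1/3}$ is routine.

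The genuine obstacle --- and the only place where the hypothesis $\beta\in\bigl(0,\ \sigma^2/(3[(d+m+5)\tau+d+2m+13])\bigr)$ is used --- is the estimate of $\Gamma_\nu^d=\Gamma^d(r_\nu-r_{\nu+1})$ and the verification of (\ref{5j4}) (which also yields (\ref{5f4}), (\ref{5h3}), (\ref{5i3})). Here I bound $\Gamma_\nu^d$ by an integral as in Sections \ref{5105} and \ref{lower}, separating the three exponential factors. The rapid block contributes $\lambda_1^{-\kappa_1}$, which is harmless because it pairs with $e^{-\delta_1k_1(r-r_+)/(8\lambda_1)}$ through $\lambda_1^{-a}e^{-1/(\lambda_1 2^{\nu+6})}\le e^{a(\ln a\,2^{\nu+6}-1)}$; the delicate block is the slow one, which produces the negative power $\lambda_2^{-\delta_3}$ together with $|\delta_3\lambda_2k_3|^{\kappa_3}$, so that collecting the worst powers of $\lambda_2$ gives a factor $\lambda_2^{-[(|l|+|i|+1)\tau+|l|+|i|+4]}$ --- with $|l|\le d$, $|i|,|\kappa|\le m+4$ over the three blocks --- times a factorial, times $2^{(\nu+6)[\cdots]}$, times $e^{-\lambda_2/2^{\nu+6}}$, exactly the structure met in the slow case but with the enlarged index budget that produces the constant $3[(d+m+5)\tau+d+2m+13]$. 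Writing $\mu_\nu^\sigma\Gamma_\nu^d=\varepsilon^{((1+\sigma)^\nu-\varrho)\sigma^2}\cdot(\text{at most polynomial in }\nu)\cdot e^{-\lambda_2/2^{\nu+6}}$ with $\varrho<1$ the ratio of the two relevant index-dependent constants, the stated bound on $\beta$ keeps the net exponent of $\varepsilon$ strictly positive, whence $\mu_\nu^\sigma(\Gamma_\nu^d+2)\le\gamma_{\nu+1}^{d+m+5}/\gamma_\nu^{d+m+5}$ for all $\nu\ge1$ once $\varepsilon$ is small; this is (\ref{5j4}). The strengthened form of (\ref{5g4}) also reduces, as (\ref{5g2}) did, to an elementary inequality in $\sigma=1/(2(m+1))$.

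With all hypotheses verified the KAM step runs for every $\nu$, and properties (1) and (3) follow by telescoping (\ref{575})--(\ref{583}) from the bounds on $\Phi_+^d$, $e_+^d$, $\mathbf a_+$, $h_+^d$, $P_+^d$ obtained in subsections \ref{5137}--\ref{5138}; the nonsingularity of $\mathcal A_\nu^d$ along the iteration is preserved as in subsection \ref{5137}. Property (2) --- invariance of the $n$ selected components of $\mathbf a_\nu$ --- is immediate from the construction of the translation $\phi^d$ via the nonsingular $n\times n$ minor $\mathcal A^d$ of $\mathfrak A^d$ under assumption $\bf{(K)}$, just as in Lemmas \ref{lemma1}(2) and \ref{lemma21}(2). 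Property (4) is nothing but the definition of $G_{\nu+1}^d$ in the iteration sequences of Section \ref{5104}. I expect the $\Gamma_\nu^d$ estimate of the third paragraph to be the sole real difficulty: one must track how the two scales $\lambda_1=\varepsilon^\alpha$ and $\lambda_2=\varepsilon^\beta$ interact inside the combined Fourier sum and confirm that the slow scale --- which alone generates negative powers $\varepsilon^{-O(\beta)}$ --- is dominated by the gain $\mu_\nu^\sigma=\varepsilon^{\sigma^2(1+\sigma)^\nu}$ precisely under the displayed restriction on $\beta$; everything else is a direct adaptation of the rapid and slow cases.
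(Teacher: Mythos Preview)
Your plan is correct and matches the paper's proof: reduce the lemma to verifying the KAM-step hypotheses inductively, note that (\ref{5d4'''})--(\ref{5d4''}), (\ref{5b4}), (\ref{5c4}), (\ref{5d4}), (\ref{5e4}) follow directly from the arguments of Lemmas~\ref{lemma1} and~\ref{lemma21}, and concentrate on (\ref{5j4}) via an estimate of $\Gamma_\nu^d$.

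The one technical point on which the paper is more explicit than your sketch: rather than ``separating the three exponential factors'' in a single stroke, the paper decomposes the sum defining $\Gamma^d$ into \emph{seven cases} according to which of $k_1,k_2,k_3$ vanish (recall that $\delta_i=0$ iff $k_i=0$, so the structure of each summand changes with the pattern of zeros). Cases (1)--(3), where only one block is active, are exactly the rapid, slow, and standard single-scale estimates and are dismissed by citing Lemmas~\ref{lemma1} and~\ref{lemma21}; cases (4)--(7) are computed separately, each producing its own power of $\lambda_2^{-1}$ and its own admissible range for $\beta$. It is case~(7) --- all three $k_i\neq0$, hence three integrals and a triple product of factorials --- that forces the factor $3$ in the denominator of the $\beta$-bound, so your implicit ``worst case'' treatment lands on the right constraint. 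Still, when you write out the argument you should make the case split explicit, since the factorization of the $\mathbf k$-sum into a product of one-variable integrals is only literally valid on the product region $\{|k_1|\ge1\}\times\{|k_2|\ge1\}\times\{|k_3|\ge1\}$, and the boundary strata must be handled (however trivially) on their own.
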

\begin{proof}
Actually, it suffices to verify the assumptions that we put forward in Section \ref{couple} for all $\nu$. With the proofs of Lemmas \ref{lemma1} and \ref{lemma21}, the proofs of assumptions (\ref{5d4'''}), (\ref{5d4'}), (\ref{5d4''}), (\ref{5b4}), (\ref{5c4}), (\ref{5d4}), (\ref{5e4}) are easy. For the simplicity of the proof, we omit them and only show something complex.

Since the value of $\Gamma^d$,
\begin{eqnarray*}
\Gamma^d &=&  \sum_{\substack{0<|\mathbf{k}|\leq K_+^b\\|i|, |\kappa| \leq m+4}} \frac{|\mathbf{k}|^{(|l|+|i|+1)\tau+ |l|+|i|+1}}{\lambda_2^{\delta_3}} |\frac{\delta_1 k_1}{\lambda_1}|^{\kappa_1}|\delta_2 k_2|^{\kappa_2} |\delta_3 \lambda_2 k_3|^{\kappa_3}\\&~&~ e^{-\frac{\delta_1k_1(r -r_+)}{8 \lambda_1}} e^{-\frac{\delta_2k_2 (r-r_+)}{8}} e^{-\frac{\lambda_2 \delta_3 k_3 (r -r_+)}{8}}\\
&\leq& \sum_{\substack{0<|\mathbf{k}| \\|i|, |\kappa|\leq m+4}} \frac{|\mathbf{k}|^{(|l|+|i|+1)\tau+ |l|+|i|+1}}{\lambda_2^{\delta_3}} |\frac{\delta_1 k_1}{\lambda_1}|^{\kappa_1}|\delta_2 k_2|^{\kappa_2} |\delta_3 \lambda_2 k_3|^{\kappa_3}\\&~&~~ e^{-\frac{\delta_1k_1(r -r_+)}{8 \lambda_1}} e^{-\frac{\delta_2k_2 (r-r_+)}{8}} e^{-\frac{\lambda_2 \delta_3 k_3 (r -r_+)}{8}},
\end{eqnarray*}
is controlled by the value of $k_1$, $k_2$ and $k_3$, we divide the calculation of $\Gamma^d(r-r_+)$ into the following seven cases:
\begin{enumerate}
 \label{de} \item $|k_1| = |k_2|= 0, |k_3| \geq 1$,
  \item $|k_1| = |k_3|= 0, |k_2| \geq 1$,
  \item $|k_2| = |k_3|= 0, |k_1| \geq 1$,
  \item $|k_1|= 0, |k_2|, |k_3| \geq 1$,
  \item $|k_2|= 0, |k_1|, |k_3| \geq 1$,
  \item $|k_3|= 0, |k_1|, |k_2| \geq 1$,
  \item $|k_1|, |k_2|, |k_3| \geq 1$.
\end{enumerate}

According to the proofs of Lemmas \ref{lemma1} and \ref{lemma21} the proofs of (\ref{5j4}) in cases (1), (2) and (3) are obvious, and we omit the details.
In case (4),
\begin{eqnarray*}
\Gamma^d &=& \sum_{\substack{|k_2|\geq 1, |k_3|\geq 1\\|i|, |\kappa|\leq m+4}} \frac{|k|^{(|l|+ |i| +1 )\tau + |l|+ |i|+1} }{\lambda_2}|k_2|^{\kappa_2}|\lambda_2 k_3|^{\kappa_3}  e^{-\frac{k_2(r -r_+)}{8}} e^{- \frac{\lambda_2 k_3(r -r_+)}{8}}\\
&\leq& \sum_{\substack{|k_2|\geq 1, |k_3|\geq 1\\|i|, |\kappa|\leq m+4}} \frac{(|k_1 k_2|)^{(|l|+ |i| +1 )\tau + |l|+ |i|+1} }{\lambda_2}|k_2|^{\kappa_2}|\lambda_2 k_3|^{\kappa_3}  e^{-\frac{k_2(r -r_+)}{8}} e^{- \frac{\lambda_2 k_3(r -r_+)}{8}}\\
&\leq& \int_1^\infty t_1^{(|l|+ |i|+1)\tau +|l|+ |i|+|\kappa_2|+ |\kappa_3|+1} e^{- \frac{t_1 (r -r_+)}{8}} d t_1\\
&~&~\lambda_2^{\kappa_3 -1}\int_1^\infty t_2^{(|l|+ |i|+1)\tau +|l|+ |i|+|\kappa_2|+ |\kappa_3|+1} e^{- \frac{t_2 \lambda_2 (r -r_+)}{8}} d t_2\\
&\leq&\frac{ 2^{2(\nu+6){((|l|+ |i|+1)\tau +|l|+ |i|+|\kappa_2|+ |\kappa_3|+1)}}}{\lambda_2^{((|l|+ |i|+1)\tau +|l|+ |i|+|\kappa_2|+ 4)}}e^{\frac{1}{2^{\nu+6}}} e^{- \frac{\lambda_2}{2^{\nu+6}}} \\
&~&~({(|l|+ |i|+1)\tau +|l|+ |i|+|\kappa_2|+ |\kappa_3|+1)}!.
\end{eqnarray*}
When $\beta \in (0, \frac{\sigma^2}{2((|l|+ |i|+1)\tau +|l|+ |i|+|\kappa_2|+ 5)})$, it is easy to check that (\ref{5j4}) in case (4) holds.

In case (5), in the same way we have
\begin{eqnarray*}
\Gamma^d &=&\sum_{\substack{|k_1|\geq 1, |k_3|\geq 1\\|i|, |\kappa|\leq m+4}} \frac{|\mathbf{k}|^{(|l|+ |i|+1)\tau +|l|+ |i|+1}}{\lambda_2} (\frac{\delta_1 k_1}{\lambda_1})^{\kappa_3}  (\delta_3 \lambda_2 k_3)^{\kappa_3}\\
&~&~  e^{-\frac{k_1(r -r_+)}{8 \lambda_1}} e^{- \frac{\lambda_2 k_3(r -r_+)}{8}}\\
&\leq& c e^{(\kappa_1 -1) (ln (\kappa_1 -1) 2^{\nu+6} -1)}\frac{2^{2(\nu+6) ((|l|+|i|+1)\tau +|l|+|i|+|\kappa_1|+|\kappa_3| +1)}}{\lambda_2^{(|l|+|i|+1)\tau +|l|+|i|+|\kappa_1|+4}}.
\end{eqnarray*}
Obviously, when $\beta \in (0, \frac{\sigma^2}{2((|l|+|i|+1)\tau +|l|+|i|+|\kappa_1|+5)})$, (\ref{5j4}) in case (5) holds.

 In case (6),
\begin{eqnarray*}
\Gamma &=& \sum_{\substack{|k_1|\geq 1, |k_2|\geq 1\\|i|, |\kappa|\leq m+4}} |\mathbf{k}|^{(|l|+|i|+1)\tau+ |l|+|i|+1}  |\frac{\delta_1 k_1}{\lambda_1}|^{\kappa_1} |\delta_2 k_2|^{\kappa_2}\\
&~&~e^{-\frac{\delta_1k_1(r -r_+)}{8 \lambda_1}} e^{-\frac{\delta_2k_2 (r-r_+)}{8}}\\
&\leq&\int_1^\infty |\lambda_1|^{-\kappa_1} |t_1|^{(|l|+|i|+1)\tau+ |l|+|i|+ |\kappa_1|+|\kappa_2|+1} e^{- \frac{t_1(r-r_+)}{8\lambda_1}} d t_1\\
&~&~~\int_1^\infty |t_2|^{(|l|+|i|+1)\tau+ |l|+|i|+ |\kappa_1|+|\kappa_2|+1} e^{- \frac{t_2(r-r_+)}{8\lambda_1}} d t_2\\
&\leq& e^{(|\kappa_1 - 1|)(ln (\kappa_1 - 1) 2^{\nu+6} - 1)} 2^{2(\nu+6)({(|l|+|i|+1)\tau+ |l|+|i|+ |\kappa_1|+|\kappa_2|+1})} \\ &~&~(((|l|+|i|+1)\tau+ |l|+|i|+ |\kappa_1|+|\kappa_2|+1)!)^2 e^{-\frac{1}{2^{\nu+6}}}.
\end{eqnarray*}
Then (\ref{5j4}) holds for case (6).

 In case (7),
\begin{eqnarray*}
\Gamma &=& \sum_{|k_1|,|k_2|,|k_3| \geq 1} \frac{|\mathbf{k}|^{(|l|+|i|+1)\tau+ |l|+|i|+1}}{\lambda_2^{\delta_3}} |\frac{\delta_1 k_1}{\lambda_1}|^{\kappa_1}|\delta_2 k_2|^{\kappa_2} |\delta_3 \lambda_2 k_3|^{\kappa_3}\\
&~&~e^{-\frac{\delta_1k_1(r -r_+)}{8 \lambda_1}} e^{-\frac{\delta_2k_2 (r-r_+)}{8}} e^{-\frac{\lambda_2 \delta_3 k_3 (r -r_+)}{8}}\\
&\leq&\int_1^\infty |\lambda_1|^{- \kappa_1 } |t_1|^{(|l|+|i|+1)\tau+ |l|+|i| +|\kappa|+1} e^{-\frac{t_1(r -r_+)}{8 \lambda_1} d t_1}\\
&~&~\int_1^\infty  |t_2|^{(|l|+|i|+1)\tau+ |l|+|i| +|\kappa|} e^{-\frac{t_2(r -r_+)}{8} d t_2}\\
&~&~\int_1^\infty |\lambda_2|^{\kappa_3 -1 } |t_3|^{(|l|+|i|+1)\tau+ |l|+|i| +|\kappa|+1} e^{-\frac{t_3(r -r_+)\lambda_2}{8} d t_3}\\
&\leq&\frac{ \lambda_1^{-\kappa_1 +2} 2^{3(\nu+6) ((|l|+|i|+1)\tau+ |l|+|i| +|\kappa|+1)}}{\lambda_2^{(|l|+|i|+1)\tau+ |l|+|i| +|\kappa_1|+|\kappa_2|+4}}e^{\frac{1}{\lambda_1 2^{\nu+6}}}e^{\frac{1}{2^{\nu+6}}}e^{\frac{\lambda_2}{ 2^{\nu+6}}}\\
&~&~(((|l|+|i|+1)\tau+ |l|+|i| +|\kappa|+1)!)^3,
\end{eqnarray*}
which means that (\ref{5j4}) holds for $\beta \in (0, \frac{\sigma^2}{3[(|l|+|i|+1)\tau+ |l|+|i|+|\kappa_1|+|\kappa_2|+5]})$.
Hence the seven cases hold for $\beta \in (0, \frac{\sigma^2}{3[(d+m+5)\tau + d+ 2m +13]})$, i.e., (\ref{5j4}) holds for $\beta \in (0, \frac{\sigma^2}{3[(d+m+5)\tau + d+ 2m +13]})$. Analogously, (\ref{5f4}) holds.

For assumption (\ref{5g4}) we have
\begin{eqnarray*}
(1+ \sigma)^\nu (m +(1+m)\sigma -1-\sigma) -m -\frac{\sigma^2 m }{3} - \frac{\sigma^2}{3} +1 \geq 0,
\end{eqnarray*}
which means
\begin{eqnarray*}
\mu_0^{\frac{m (1+\sigma)^\nu - m}{(m+1)\sigma}} \mu_0^{(1+\sigma)^\nu} \varepsilon^{-\beta [(|l|+|i|+1)\tau+ |l|+|i| +|\kappa_1|+|\kappa_2|+4]}
\leq\mu_0^{\frac{(1+\sigma)^{\nu+1} -1}{\sigma(m+1)}}.
\end{eqnarray*}

\end{proof}

The convergences and measure estimates are similar to subsection \ref{5107}.

\section{Proof of (3) in Theorem \ref{dingli15}}\label{5108}
In this section, we sketch the proof of Theorem \ref{dingli15}. In fact, we can complete it by combining Section \ref{couple} and the arguments in \cite{LLL}, but the only difference is to solve the following equation on the frequency ratio instead of (\ref{5117}):
\begin{eqnarray*}
(\mathcal{A}^d +  {S}\hat{h}) {\mathbf{b}^d} - t_*^d (\mathbf{a}_{i_1}, \cdots, \mathbf{a}_{i_n})^T = -{\mathbf{p}^d},\\
 \langle  (\mathbf{a}_{i_1}, \cdots, \mathbf{a}_{i_n})^T, \mathbf{b}^d \rangle +\frac{1}{2} \langle\mathfrak{A}^d  \mathbf{b}_*,  \mathbf{b}_*\rangle + \hat{h}^d(y_{*}^d,\eta_{*}^d, I_{*}^d) + [R^d](y_{*}^d,\eta_{*}^d, I_{*}^d) = 0,
\end{eqnarray*}
where $(\mathbf{a}_{i_1}, \cdots, \mathbf{a}_{i_n})$ is the first $n$ components of $T^{-1} \mathbf{a}T$, which, by subisoenergetic nondegenerate condition $\bf{(A1^{\prime})}$ and implicit function theorem, admits a local smooth solution $(\mathbf{b}_*, t_*^d)$, $\mathbf{b}_* \in G^d$, $t_*^d \in R^1$, such that $\mathbf{b}_{j,*} = 0$ if $j\notin \{i_1, \cdots, i_n\}.$

\section{Application to Weakly Coupled N-Oscillators}\setcounter{equation}{0}\label{application}
In this section, we show an application of our results to the weakly coupled N-oscillators with quasiperiodic force, i.e., the following equation
\begin{eqnarray}\label{5109}
\ddot{x}_i + \nabla_i V(x_i) + \varepsilon (2 x_i - x_{i+1} - x_{i-1}) = \varepsilon \sin \frac{\omega_it}{\varepsilon^{\alpha}} + \varepsilon \sin \varepsilon^{\beta} \Omega_it,
\end{eqnarray}
where $i = 1, \cdots, N$, $0\leq\alpha$, $0\leq\beta\ll1,$ and $\nabla_i$ denote the derivation with respect to $x_i$, $\varepsilon$ is a small parameter.

The Hamiltonian function of system (\ref{5109}) is the following:
\begin{eqnarray}\label{5110}
\nonumber H (x_i, p_i, \eta,\theta) &=& \langle \omega, \eta \rangle + \langle \Omega, y\rangle+ \sum_{i =1}^N \frac{1}{2} p_i^2 + \sum_{i = 1}^N V(x_i)+ \varepsilon \sum_{i =1}^{N-1} \frac{1}{2}(x_{i+1} - x_i)^2 \\
&~& - \sum_{i =1}^N \varepsilon x_i \sin{\frac{\theta_i}{\varepsilon^{\alpha}}} - \sum_{i = 1}^{N} \varepsilon x_i \sin \varepsilon^{\beta} \psi_i,
\end{eqnarray}
where $\omega= (\omega_1, \cdots, \omega_N)$, $\Omega=(\Omega_1, \cdots,\Omega_N)$, $\eta= (\eta_1, \cdots ,\eta_N),$ $y=(y_1, \cdots, y_N)$  $\in \mathbb R^N$, $\theta_i = \omega_i t$, $\psi_i = \Omega_i t,$  $p_i  = \dot{x}_i$.

For Hamiltonian system (\ref{5110}), we have the following assumption:
\begin{itemize}
\item[{\bf (H3)}] There is some compact and connected subset of the $x_i - p_i$ plane, in which the level sets $H (x_i, p_i) = \frac{1}{2} p_i^2 + V(x_i) = h_i$, $i = 1, \cdots, N$,  denotes a closed curve, called $\Gamma(h_i)$, which encloses the origin $(0,0)$.
\item[{\bf (H4)}] Let $I_i = I_i(h_i)$ be the area enclosed by the closed curve $\Gamma(h_i)$, i.e.,
\begin{eqnarray*}
\oint_{\frac{1}{2} p_i^2 + V(x_i) = h_i^0(I)} p_i d x_i = I_i.
\end{eqnarray*}
\item[{\bf (H5)}] $\omega = \omega(I)$, $\Omega = \Omega(I)$.
\item[{\bf(H6)}]Denote $A = {\rm diag} (\partial_{I_1}^2 h_1^0, \cdots, \partial_{I_N}^2 h_N^0)$. $A$ has an $n\times n$ nonsingular minor $\mathcal{A}$.
\end{itemize}
Denote the standard symplectic transformation $\Psi:$ $(x_i, p_i) \mapsto (\varphi_i, I_i),$ which is given by
\begin{eqnarray}\label{5133}
S_{x_i} (x_i, I_i) = p_i, ~~~~S_{I_i} (x_i, I_i) = \varphi_i,
\end{eqnarray}
where $S(x_i, I_i) = \int_{\Gamma^*} y dx,$ and $\Gamma^*$ is the part of the enclosed curve $\frac{1}{2}p_i^2 + V(x_i) = h_i^0(I)$ connecting the $p_i-$axis with point $(x_i, p_i)$, oriented clockwise.

Under standard symplectic transformation (\ref{5133}), Hamiltonian system (\ref{5110}) reads
\begin{eqnarray}\label{5146}
\nonumber H&=&\langle \omega, \eta \rangle+\langle \Omega, y\rangle + \sum_{i =1}^N h_i^0 (I_i)- \varepsilon \sum_{i =1}^{N-1} \frac{1}{2}(x_{i+1} - x_i)^2- \sum_{i =1}^N \varepsilon x_i \sin{\frac{\theta_i}{\varepsilon^{\alpha}}}\\
\nonumber &~&- \sum_{i = 1}^{N} \varepsilon x_i \sin \varepsilon^{\beta} \psi_i\\
 &=& \langle \omega, \eta \rangle+\langle \Omega, y\rangle + \langle \Lambda, I\rangle + \frac{1}{2} \langle I, AI\rangle + \hat{h}(I) +\varepsilon P(I,\varphi, \frac{\theta}{\varepsilon^{\alpha}}, \varepsilon^{\beta}\psi),
\end{eqnarray}
which is a special case of (\ref{5131}), where $\Lambda = (\partial_{I_1} h_1^0, \cdots, \partial_{I_N} h_N^0)$, $A = {\rm diag} (\partial_{I_1}^2 h_1^0, \cdots, \partial_{I_N}^2 h_N^0),$ $\hat{h}(I) = O(I^3),$ $I= (I_1, \cdots, I_N)$, $\omega$, $\Omega,$ $\eta$, $y,$ $\theta = (\theta_1,\cdots,\theta_N),$ $\varphi = (\varphi_1, \cdots, \varphi_N)$, $\psi= (\psi_1, \cdots, \psi_N)$ $\in \mathbb R^N.$


We assume the following isoenergetic nondegeneracy:
\begin{itemize}
\item[{\bf (H7)}] Denote $\mathcal{M}$ by a given energy surface. There is a smoothly varying $n\times n$ nonsingular minor $\mathcal{A}(I)$ of $A(I)$ on $\mathcal{M}$, such that
$$\det \left(
               \begin{array}{cc}
                 \mathcal{A}(I) & \Lambda^*(I) \\
                 \Lambda^*(I)^T & 0 \\
               \end{array}
             \right) \neq 0$$
on $\mathcal{M}$, where $\Lambda^*(I) = (\partial_{I_{i_1}} h_{i_1}^0, \cdots, \partial_{I_{i_n}} h_{i_n}^0)$ and $i_1,$ $\cdots,$ $i_n$ denote the row indices of $\mathcal{A}$ in $A$.
\end{itemize}

\begin{remark}
The assumption ${\bf{(H7)}}$ is a special case of ${\bf{(A1^{\prime})}}$.
\end{remark}

 Base on Theorems \ref{dingli15}, we have the following.

\begin{theorem}
For Hamiltonian system (\ref{5110}), i.e., (\ref{5109}).
 \begin{itemize}
\item[{\bf (1)}] Under assumptions $\bf{(H3)} - \bf{(H6)}$ there exist a $\varepsilon_0 >0 $ and a family of Cantor sets $G_\varepsilon \subset G$, $0<\varepsilon < \varepsilon_0$, such that on $G_\varepsilon$ there is family of $n-$invariant tori for (\ref{5109}), i.e., there is a family of quasiperiodic solution with $n$ incommensurate frequencies for (\ref{5109}). Moreover, the relative Lebesgue measure $|G \setminus G_\varepsilon |$ tends to 0 as $\varepsilon \rightarrow 0$.
 \item[{\bf (2)}] Under assumptions $\bf{(H3)} - \bf{(H5)}$ and ${\bf{(H7)}}$ there exist a $\varepsilon_0 >0 $ and a family of Cantor sets $G_\varepsilon\subset \mathcal{M} \subset G$, $0<\varepsilon < \varepsilon_0$, such that on $G_\varepsilon$  for (\ref{5109}) there is a family of $n-$invariant tori whose frequencies are $\Lambda_\varepsilon$ preserved the ratio of the $i_1,$, $\cdots$, $i_n$ components of its toral frequency $\Lambda_\varepsilon(I)$ i.e., $$[\Lambda_{\varepsilon,i_1}(I), \cdots, \Lambda_{\varepsilon,i_n}(I)] = [\Lambda_{i_1}(I), \cdots, \Lambda_{i_n}(I)],$$ where $\Lambda_{\varepsilon,i_j}(I)$ and $\Lambda_{i_j}(I)$ are the $i_j$ components of $\Lambda_\varepsilon$ and $\Lambda$, respectively. Moreover, the relative Lebesgue measure $|G \setminus G_\varepsilon |$ tends to 0 as $\varepsilon \rightarrow 0$.
  \end{itemize}
\end{theorem}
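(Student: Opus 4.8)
The statement is obtained by applying Theorem \ref{dingli15} to the Hamiltonian (\ref{5110}) after it has been put into action--angle form, so the plan is to carry out that reduction and then check that the hypotheses of Theorem \ref{dingli15} are met. First I would use \textbf{(H3)} and \textbf{(H4)} to introduce, for each oscillator $i$, the action $I_i$ equal to the area enclosed by the closed curve $\Gamma(h_i)$ and the conjugate angle $\varphi_i$ through the generating function $S(x_i,I_i)$ of (\ref{5133}). Since $V$ is real analytic and the level curves $\Gamma(h_i)$ are nondegenerate closed curves encircling the origin, the period and area functions are analytic in the energy, hence the symplectic transformation $\Psi$ is real analytic on a complex neighbourhood, $h_i^{0}(I_i)$ is real analytic, and $x_i=x_i(\varphi_i,I_i)$ is analytic and $2\pi$--periodic in $\varphi_i$. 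Applying $\Psi$ in the variables $(x_i,p_i)$ and the identity in the time--angle pairs $(\theta,\eta)$ and $(\psi,y)$ turns (\ref{5110}) into (\ref{5146}),
\[
H=\langle\omega,\eta\rangle+\langle\Omega,y\rangle+\langle\Lambda,I\rangle+\tfrac12\langle I,AI\rangle+\hat h(I)+\varepsilon P\!\left(I,\varphi,\tfrac{\theta}{\varepsilon^{\alpha}},\varepsilon^{\beta}\psi\right),
\]
which is a special case of (\ref{5131}): after a relabelling of the three angle--action pairs, $\theta/\varepsilon^{\alpha}$ is the rapid rotation $x/\lambda_1$ with $\lambda_1=\varepsilon^{\alpha}$, $\varepsilon^{\beta}\psi$ is the slow rotation $\lambda_2\varphi$ with $\lambda_2=\varepsilon^{\beta}$, and $\varphi$ is the usual angle; the integrable part is affine in the two time--actions, so the Hessian block $\mathfrak A^{d}$ reduces to ${\rm diag}(0,0,A)$ up to cubic corrections, while $\varepsilon P$ is real analytic and of size $O(\varepsilon)$ by the Cauchy estimate.

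Next I would verify the assumptions of Theorem \ref{dingli15}. The Kolmogorov condition \textbf{(K)} follows from \textbf{(H6)}, since a nonsingular $n\times n$ minor $\mathcal A$ of $A$ furnishes the required nonsingular minor of $\mathfrak A^{d}$; the subisoenergetic nondegeneracy \textbf{(Iso)} is implied by \textbf{(H7)}, as recorded in the Remark, with $\Lambda^{\ast}(I)$ playing the role of the bordering column of the Hessian. For the R\"ussmann--type condition \textbf{(R)} one observes that $\mathbf a=(\omega(I),\Omega(I),\Lambda(I)+AI+\partial_I\hat h)$ depends on $I$ alone; using \textbf{(H5)} together with the nondegeneracy of $A$ from \textbf{(H6)} and the analyticity of all data, one checks that the rank of $\{\partial_{\mathbf b}^{\,\alpha}\mathbf a:\,0\le|\alpha|\le N\}$ equals $3N$ on $G$ for some finite $N>1$, which is \textbf{(R)}. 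Finally one fixes auxiliary constants $\sigma\in(0,\tfrac13)$, $m\ge1$, $\tau>d(d-1)-1$, and since the exponent $\beta$ in (\ref{5109}) is taken arbitrarily small, the admissible range $\beta\in\bigl(0,\tfrac{\sigma^{2}}{3[(d+m+5)\tau+d+2m+13]}\bigr)$ required in Theorem \ref{dingli15} is automatically satisfied.

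With these checks done, part (2) of Theorem \ref{dingli15} applied to (\ref{5146}) gives, for $0<\varepsilon<\varepsilon_0$, a Cantor set $G_\varepsilon$ and a family of real analytic invariant $3N$--tori carrying quasiperiodic flow, $n$ of whose toral frequencies equal the corresponding unperturbed ones, which may be chosen Diophantine and hence rationally independent; pulling back by $\Psi^{-1}$ produces the claimed family of quasiperiodic solutions of (\ref{5109}), and $|G\setminus G_\varepsilon|\to0$ is inherited verbatim, which proves part \textbf{(1)}. For part \textbf{(2)} one instead invokes part (3) of Theorem \ref{dingli15} on the energy surface $\mathcal M$ of \textbf{(H7)}: the $n$ frequency ratios that are preserved are precisely the ratios of the components $\Lambda_{i_1}(I),\dots,\Lambda_{i_n}(I)$ indexed by the rows of $\mathcal A$, i.e.\ $[\Lambda_{\varepsilon,i_1}(I),\dots,\Lambda_{\varepsilon,i_n}(I)]=[\Lambda_{i_1}(I),\dots,\Lambda_{i_n}(I)]$ on $\mathcal M$. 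The main obstacle here is not the KAM iteration, which is entirely supplied by Theorem \ref{dingli15}, but the preliminary reduction: one must verify that the action--angle change (\ref{5133}) is globally well defined and real analytic on a complex strip, that the coupling $\tfrac12(x_{i+1}-x_i)^2$ and the forcing terms remain analytic and uniformly $O(\varepsilon)$ in the new coordinates, and---most delicately---that the R\"ussmann rank condition \textbf{(R)} survives for the integrable part that is degenerate (affine) in the time--action variables, which is exactly the point where \textbf{(H5)} and the nondegeneracy of the $h_i^{0}$ enter.
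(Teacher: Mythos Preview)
Your proposal is correct and follows essentially the same approach as the paper: reduce (\ref{5110}) to the form (\ref{5146}) via the action--angle transformation (\ref{5133}), recognize this as a special case of (\ref{5131}), and then invoke Theorem~\ref{dingli15}. In fact the paper's own proof is simply ``omitted; for details refer to Sections~\ref{couple} and~\ref{5108}'', so your write-up is considerably more explicit than what the authors provide, and your identification of the verification of \textbf{(R)} as the most delicate point is apt.
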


\begin{proof}
For the sake of simplicity, we omit the proof and for details, refer to Sections \ref{couple} and \ref{5108}.

\end{proof}

\end{document}